\providecommand{\examplename}{Example}
\newtheorem{theorem}{Theorem}[section]
\newtheorem{proposition}[theorem]{Proposition}
\newtheorem{corollary}[theorem]{Corollary}
\newtheorem{lemma}[theorem]{Lemma}
\theoremstyle{remark}
\newenvironment{remark}
    {\pushQED{\qed}\remarkx}
    {\popQED\endremarkx}
\theoremstyle{definition}
\newtheorem*{example*}{\protect\examplename}
\newenvironment{example}
    {\pushQED{\qed}\examplex}
    {\popQED\endexamplex}
\theoremstyle{definition}
\newtheorem*{assumption*}{Assumption}
\pgfplotsset{compat=1.16}
\appto{\bibsetup}{\sloppy}
\newcommand\N{\mathbb{N}}
\newcommand{\mS}{\mathcal{S}}
\newcommand{\mZ}{\mathcal{Z}}
\newcommand\R{\mathbb{R}}
\newcommand\E{\mathds{E}}
\newcommand\p{\mathds{P}}
\newcommand\1{\mathds{1}}
\newcommand\Oh{\mathcal{O}}
\newcommand{\mL}{\mathcal{L}}
\newcommand\da{\downarrow}
\newcommand\ua{\uparrow}
\newcommand\ra{\rightarrow}
\newcommand{\ve}{\varepsilon}
\newcommand\cid{\xrightarrow{d}}
\newcommand\Var{\mbox{Var}}
\newcommand\nf[1]{\normalfont{#1}}
\newcommand{\D}{\mathrm{d}}
\newcommand{\ov}[1]{\overline{#1}}
\newcommand{\un}[1]{\underline{#1}}
\newcommand{\wt}[1]{\widetilde{#1}}
\newcommand{\wh}[1]{\widehat{#1}}
\newcommand{\BG}{\mathrm{BG}}
\newcommand\jorge[1]{{\color{red}#1}}
\newcommand\david[1]{{\color{blue}#1}}
\title{How smooth can the convex hull of a L\'evy path be?}
\date{\today}
\author{David Bang, Jorge Gonz\'alez C\'azares \& Aleksandar Mijatovi\'c
}
\address{Department of Statistics, University of Warwick, \and The Alan Turing Institute, UK}
\email{david.bang@warwick.ac.uk}
\email{jorge.i.gonzalez-cazares@warwick.ac.uk}
\email{a.mijatovic@warwick.ac.uk}
\begin{document}

\begin{abstract}
We describe the rate of growth of the derivative $C'$ of the convex minorant of a L\'evy path at times where $C'$ increases continuously. Since the convex minorant is piecewise linear, $C'$ may exhibit such behaviour either at the vertex time $\tau_s$ of finite slope $s=C'_{\tau_s}$ or at time $0$ where the slope is $-\infty$. 
While the convex hull depends on the entire path, we show that the local fluctuations of the derivative $C'$  depend only on the fine structure of the small jumps of the L\'evy process and are the same for all time horizons. In the domain of attraction of a stable process, we establish sharp results essentially characterising the modulus of continuity of $C'$  
up to sub-logarithmic factors. As a corollary 
we obtain novel results for the growth rate at $0$ of meanders
in a wide class of L\'evy processes.
\end{abstract}

\subjclass[2020]{60G51,60F15}

\keywords{Derivative of convex minorant, L\'evy processes, law of iterated logarithm, additive processes}

\maketitle

\section{Introduction}
\label{sec:intro}

The class of L\'evy processes with paths whose graphs have convex hulls in the plane with smooth boundary almost surely has recently been characterised  in~\cite{SmoothCM}. In fact, as explained in~\cite{SmoothCM}, to understand whether the boundary is smooth at a point with tangent of a given slope, it suffices to analyse whether the right-derivative $C'=(C'_t)_{t\in(0,T)}$ of the convex minorant $C=(C_t)_{t\in[0,T]}$ of a L\'evy process $X=(X_t)_{t\in[0,T]}$ is continuous as it attains that slope (recall that $C$ is the pointwise largest convex function satisfying $C_t\leq X_t$ for all $t\in[0,T]$). The main objective of this paper is to quantify the smoothness of the boundary of the convex hull of $X$ by quantifying the modulus of continuity of $C'$ via its lower and upper functions. In the case of times $0$ and $T$, we quantify the degree of smoothness of the boundary of the convex hull by analysing the rate at which $|C'_t|\to\infty$ as $t$ approaches either $0$ or $T$ (see \href{https://youtu.be/9uCge3eMHQg}{YouTube}~\cite{Presentation_AM} for a short presentation of our results).

It is known that $C$ is a piecewise linear convex function~\cite{MR2978134,fluctuation_levy} and the image of the right-derivative $C'$ over the open intervals of linearity of $C$ is a countable random set $\mS$ with a.s. deterministic limit points that do not depend on the time horizon $T$, see~\cite[Thm~1.1]{SmoothCM}. These limit points of $\mS$ determine the continuity of $C'$ on $(0,T)$ outside of the open intervals of constancy of $C'$, see~\cite[App.~A]{SmoothCM}. Indeed, the \textit{vertex time process}~$\tau=(\tau_s)_{s\in\R}$, given by $\tau_s\coloneqq \inf\{t\in(0,T):C'_t>s\}\wedge T$ (where $a\wedge b\coloneqq\min\{a,b\}$ and $\inf\emptyset\coloneqq\infty$), is the right-inverse of the non-decreasing process $C'$. The process $\tau$ finds the times in $[0,T]$ of the vertices of the convex minorant $C$ (see~\cite[Sec.~2.3]{fluctuation_levy}), so the only possible discontinuities of $C'$ lie in the range of $\tau$. Clearly, it suffices to analyse only the times $\tau_s$ for which $C'$ is non-constant on the interval $[\tau_s,\tau_s+\ve)$ for every $\ve>0$ (otherwise, $\tau_s$ is the time of a vertex isolated from the right). At such a time, the continuity of $C'$ can be described in terms of a limit set of $\mS$. In the present paper we analyse the quality of the right-continuity of $C'$ at such points. By time reversal, analogous results apply for the left-continuity of $t\mapsto C'_t$ on $(0,T)$ (i.e., as $t\ua\tau_s$ for $s\in\R$) and for the explosion of $C'_t$ as $t\ua T$. Throughout the paper, the variable $s\in\R$ will be reserved for \emph{slope}, indexing the vertex time process $\tau$.

\subsection{Contributions}

We describe the small-time fluctuations of the derivative of the boundary of the convex hull of $X$ at its points of smoothness. This requires studying the local growth of $C'$ in two regimes: at \emph{finite slope} (FS) $s$ in the deterministic set $\mL^+(\mS)\subset\R$ of right-limit points\footnote{A point $x$ is a right-limit point of $A\subset\R$, denoted $x\in\mL^+(A)$ if $A\cap(x,x+\ve)\ne\emptyset$ for all $\ve>0$ (see also~\cite[App.~A]{SmoothCM}).} of the set of slopes~$\mS$ and at \emph{infinite slope} (IS) for L\'evy processes of infinite variation, see Figure~\ref{fig:CM_0_and_postmin} below. In terms of times, regime (FS) with $s\in\mL^+(\mS)$ analyses how $C'$ leaves the slope~$s$ at vertex time $\tau_s$ in $[0,T)$ and regime (IS) analyses how $C'$ enters from $-\infty$ at time $0=\lim_{u\da-\infty}\tau_u$. At all other times $t\in(0,T)\setminus\{\tau_s\,:\,s\in\mL^+(\mS)\}$, the derivative $C'$ is constant on $[t,t+\ve)$ for some sufficiently small $\ve>0$. In particular, in what follows we exclude all L\'evy processes that are compound Poisson with drift, since $C'$ only takes finitely many values in that case. 

\begin{figure}[ht]
\centering
\includegraphics[width=.49\textwidth]{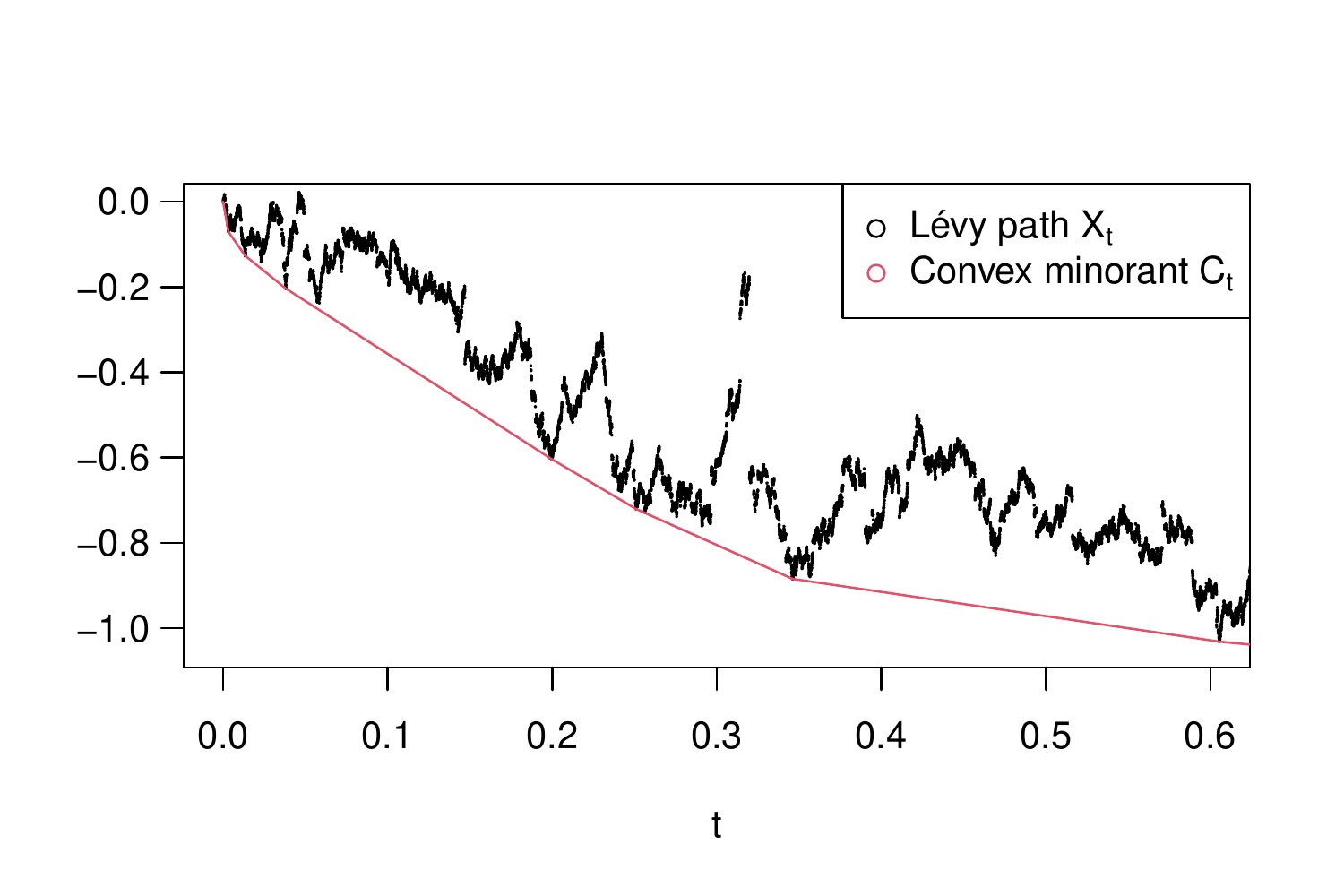}
\includegraphics[width=.49\textwidth]{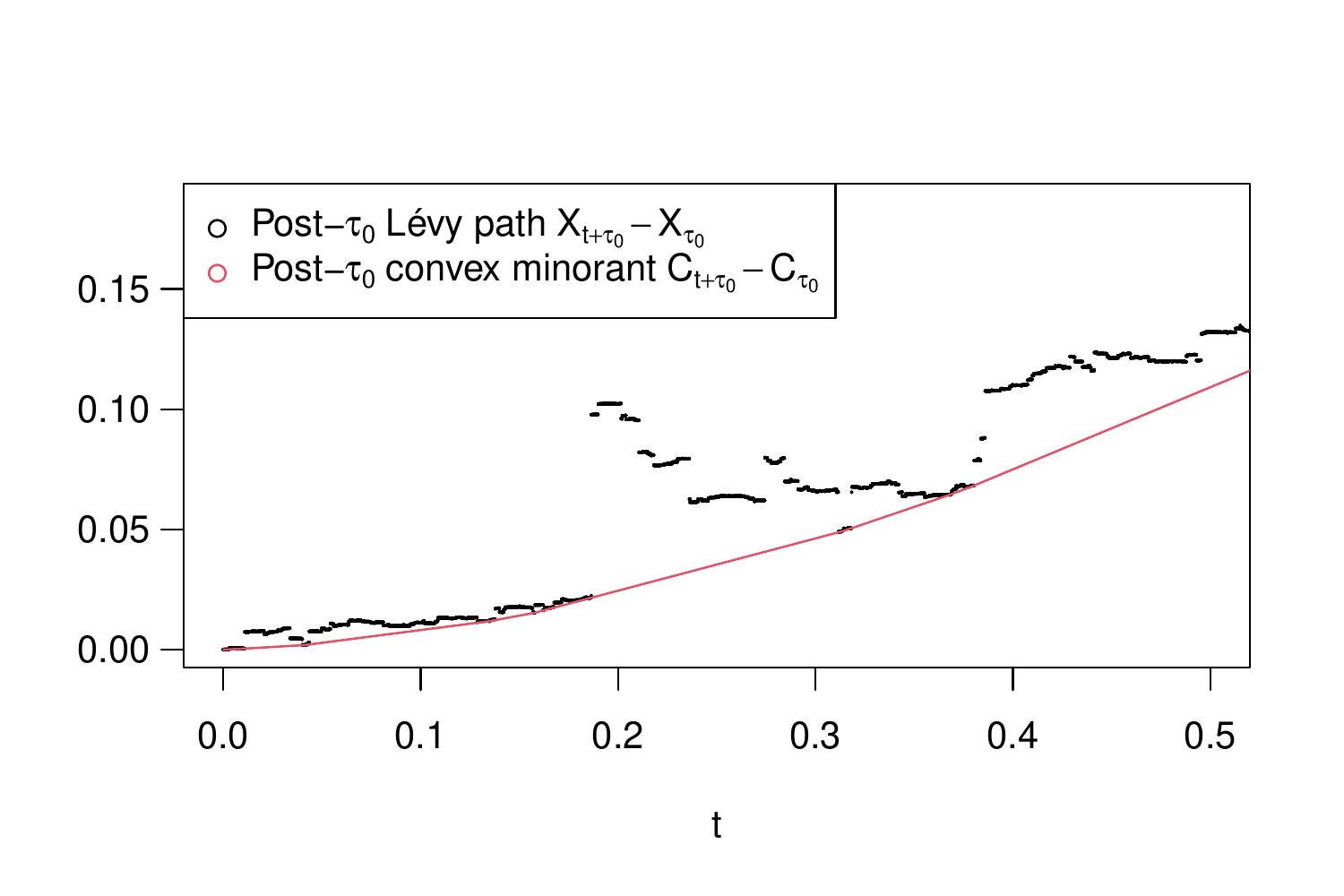}
\caption{\small The picture on the left shows the path of an $\alpha$-stable L\'evy process $X$ with $\alpha\in (1,2)$ and its convex minorant $C$ starting at time $0$. The picture on the right shows the post-minimum process $(X_{t+\tau_0}-X_{\tau_0})_{t\in[0,T-\tau_0]}$ of an $\alpha$-stable process with $\alpha \in (0,1)$ and its corresponding convex minorant $(C_{t+\tau_0}-C_{\tau_0})_{t\in[0,T-\tau_0]}$. Note that, in the case $\alpha\in(0,1)$, the derivative $C'$ is continuous only at $\tau_0$, i.e. at $t=0$ in the graph, and at no other contact point between the path and its convex minorant.}
\label{fig:CM_0_and_postmin}
\end{figure}

{\textbf{Regime (FS): $C'$ immediately after $\tau_s$.}} 
Given a slope $s\in\R$, we have $s\notin\mS$ a.s. by~\cite[Thm~3.1]{fluctuation_levy} since the law of $X$ is diffuse.
By~\cite[Thm~1.1]{SmoothCM}, $s\in\mL^+(\mS)$ if and only if the derivative $C'$ attains level $s$ at a unique time $\tau_s\in(0,T)$ (i.e. $C'_{\tau_s}=s$) and is not constant on every interval $[\tau_s,\tau_s+\ve)$, $\ve>0$, a.s. Moreover, $s\in\mL^+(\mS)$ if and only if $\int_0^1 \p(X_t/t\in(s,s+\ve))t^{-1}\D t=\infty$ for all $\ve>0$. The regime (FS) includes an infinite variation process $X$ if it is strongly eroded (implying $\mL^+(\mS)=\R$) or, more generally, if $(X_t-st)_{t\ge 0}$ is eroded (implying $s\in\mL^+(\mS)$), see~\cite{SmoothCM}. Moreover, regime (FS) includes a finite variation process $X$ at slope $s\in\mL^+(\mS)$ if and only if the natural drift $\gamma_0=\lim_{t\da 0}X_t/t$ equals $s$ and $\int_0^1 \p(X_t>\gamma_0t)t^{-1}\D t=\infty$ or, equivalently, if the positive half-line is regular for $(X_t-\gamma_0 t)_{t\ge 0}$ (see~\cite[Cor.~1.4]{SmoothCM} for a characterisation in terms of the L\'evy measure of $X$ or its characteristic exponent). 

Our results in regime (FS) are summarised as follows. For any process with $s\in\mL^+(\mS)$, Theorem~\ref{thm:post-min-lower} establishes general sufficient conditions identifying when $\liminf_{t\da 0}(C'_{t+\tau_s}-s)/f(t)$ is either $0$ a.s. or $\infty$ a.s. In particular, we show that $\liminf_{t\da 0}(C'_{t+\tau_s}-s)/f(t)$ cannot take a positive finite value if $X$ has jumps of both signs and is an $\alpha$-stable with $\alpha\in(0,1]$ (recall that, if $\alpha>1$, then $\mL^+(\mS)=\emptyset$ by~\cite[Prop.~1.6]{SmoothCM}). 

For processes $X$ in the small-time domain of attraction of an $\alpha$-stable process with $\alpha \in (0,1)$ (see Subsection~\ref{subsec:upper-post-min} below for definition), Theorem~\ref{thm:upper_fun_C'_post_min} finds a parametric family of functions $f$ that essentially determine the upper fluctuations of $C'_{t+\tau_{s}}-s$ up to sublogarithmic factors. In particular, Theorem~\ref{thm:upper_fun_C'_post_min} determines when $\limsup_{t\da 0}(C'_{t+\tau_s}-s)/f(t)$ equals $0$ a.s. or $\infty$ a.s., essentially characterising the right-modulus of continuity\footnote{We say that a non-decreasing function $\varphi:[0,\infty)\to[0,\infty)$ is a right-modulus of continuity of a right-continuous function $g$ at $x\in\R$ if $\limsup_{y\da x}|g(y)-g(x)|/\varphi(y-x)<\infty$.} of $C'$ at $\tau_s$. The family of functions $f$ is given in terms of the regularly varying normalising function of $X$. 


{\textbf{Regime (IS): $C'$ immediately after $0$.}} 
The boundary of the convex hull of $X$ is smooth at the origin if and only if $\lim_{t\da 0}C'_t=-\infty$ a.s., which is equivalent to $X$ being of infinite variation (see~\cite[Prop.~1.5~\&~Sec.~1.1.2]{SmoothCM}). If $X$ has finite variation, then $C'$ is bounded (see~\cite[Prop.~1.3]{SmoothCM}). In this case, $C'$ has positive probability of being non-constant on the interval $[0,\ve)$ for every $\ve>0$ if and only if the negative half-line is not regular. Moreover, if this event occurs, then $C'_t$ approaches the natural drift $\gamma_0$ as $t\da 0$ by~\cite[Prop.~1.3(b)]{SmoothCM} and the local behaviour of $C'$ at $0$ would be described by the results of regime (FS). Thus, in regime (IS) we only consider L\'evy processes of infinite variation. 

Our results in regime (IS) are summarised as follows. For any infinite variation process $X$, Theorem~\ref{thm:C'_limsup} establishes general sufficient conditions for $\limsup_{t \da 0}|C'_t|f(t)$ to equal either $0$ a.s. or $\infty$ a.s. In particular, we show that $\limsup_{t\da 0}|C'_t|f(t)$ cannot take a positive finite value if $X$ is $\alpha$-stable with $\alpha\in[1,2)$ and has (at least some) negative jumps. 

If the L\'evy process lies in the domain of attraction of an $\alpha$-stable process, with $\alpha \in (1,2]$, Theorem~\ref{thm:lower_fun_C'} finds a parametric family of functions $f$ that essentially determine the lower fluctuations of $C'$ up to sublogarithmic functions. The function $f$ is given in terms of the regularly varying normalising function of $X$. Again, these results describe the right-modulus of continuity of the derivative of the boundary of the convex hull of $X$ (as a closed curve in $\R^2$) at the origin. In this case, for a sufficiently small $\ve>0$, we may locally parametrise the curve $((t,C_t);t\in[0,\ve])$, as $((\varsigma(t),t);t\in[C_\ve,0])$, using a local inverse $\varsigma(t)$ of $C_t$ with left-derivative $\varsigma'(t)=1/C'_{\varsigma(t)}$ that vanishes at $0$ (since $\lim_{t\da 0}1/|C'_t|=0$ a.s.). Thus, the left-modulus of continuity of $\varsigma$ at $0$ is described by the upper and lower limits of $(|C'_t|f(t))^{-1}$ as $t\da 0$, the main focus of our results in this regime.

{\textbf{Consequences for the path of a L\'evy process and its meander.}}
In Subsection~\ref{subsec:applications} we present some implications the results in this paper have for the path of $X$. We find that, under certain conditions, the local fluctuations of $X$ can be described in terms of those of $C'$, yielding novel results for the local growth of the post-minimum process of $X$ and the corresponding L\'evy meander (see Lemma~\ref{lem:upper_fun_Lev_path_post_slope} and Corollaries~\ref{cor:post_tau_s_Levy_path} and~\ref{cor:post-tau_s-Levy-path-attraction} below). 

\subsection{Strategy and ideas behind the proofs}
An overview of the proofs of our results is as follows. First we show that, under our assumptions, the local properties of $C'$ do not depend on the time horizon $T$. This reduces the problem to the case where the time horizon $T$ is independent of $X$ and exponentially distributed (the corresponding right-derivative is denoted $\wh C'$). Second, we translate the problem of studying the local behaviour of $\wh C'$ to the problem of studying the local behaviour of its inverse: the vertex time process $\wh\tau$. Third, we exploit the fact that, since the time horizon $T$ is an independent exponential random variable with mean $1/\lambda$, the vertex time process $\wh\tau$ is a time-inhomogenous non-decreasing additive process (i.e., a process with independent but non-stationary increments) and its Laplace exponent is given by (see~\cite[Thm~2.9]{fluctuation_levy}):
\begin{equation}
\label{eq:cf_tau0}
\E[e^{-w\wh\tau_u}]
=e^{-\Phi_u(w)}, \quad \text{where} \quad \Phi_u(w)\coloneqq \int_0^\infty (1-e^{-w t})e^{-\lambda t}\p(X_t\le ut)\frac{\D t}{t},
\quad\text{for $w\ge 0$, $u\in\R$.}
\end{equation}
These three observations reduce the problem to the analysis of the fluctuations of the additive process~$\wh\tau$. 

The local properties of $C'$ are entirely driven by the small jumps of $X$. However, different facets of the small-jump activity of $X$ dominate in each regime, resulting in related but distinct results and criteria. Indeed, regime (FS) corresponds to the short-term behaviour of $\wh\tau_{s+u}-\wh\tau_{s}$ as $u\da 0$ while regime (IS) corresponds to the long-term behaviour of $\wh\tau_u$ as $u\to -\infty$ (note that, when $X$ is of infinite variation, $\tau_u>0$ for $u\in\R$ and $\lim_{u\to -\infty}\wh\tau_u=0$ a.s.). This bears out in a difference in the behaviour of the Laplace exponent $\Phi$ of $\wh\tau$ at either bounded or unbounded slopes and leads to an interesting diagonal connection in behaviour that we now explain.

Our main tool is the novel description of the upper and lower fluctuations of a non-decreasing time-inhomogenous additive process $Y$ started at $Y_0=0$, in terms of its time-dependent L\'evy measure and Laplace exponent. In our applications, the process $Y$ is given by $(\wh\tau_{u+s}-\wh\tau_s)_{u\ge 0}$ in regime (FS) and $(\wh\tau_{-1/u})_{u\ge 0}$ (with conventions $-1/0=-\infty$ and $\wh\tau_{-\infty}=0$) in regime (IS). Then our main technical tools, Theorems~\ref{thm:limsup_L} and~\ref{thm:Y_limsup} of Section~\ref{sec:additive} below, describing the upper and lower fluctuations of $Y$, also serve to describe the lower and upper fluctuations, respectively, of the right-inverse $L$ of $Y$.
Since, in regime (FS), we have $\wh C'_{t+\tau_s}-s=L_t$ but, in regime (IS), we have $\wh C'_t=-1/L_t$, the lower (resp. upper) fluctuations of $\wh C'$ in regime (FS) will have a similar structure to the upper (resp. lower) fluctuations of $\wh C'$ in regime (IS). This diagonal connection is \emph{a priori} surprising as the processes considered by either regime need not have a clear connection to each other. Indeed, regime (FS) considers most finite variation processes and only some infinite variation processes while regime (IS) considers exclusively infinite variation processes. This diagonal connection is reminiscent of the duality between stable process with stability index $\alpha\in(1,2]$ and a corresponding stable process with stability index $1/\alpha\in[1/2,1)$ arising in the famous time-space inversion first observed by Zolotarev for the marginals and later studied by Fourati~\cite{MR2218871} for the ascending ladder process (see also~\cite{MR4237257} for further extensions of this duality). 

The lower and upper fluctuations of the corresponding process $Y$ require varying degrees of control on its Laplace exponent $\Phi$ in~\eqref{eq:cf_tau0}. The assumptions of Theorem~\ref{thm:limsup_L} require tight two-sided estimates of $\Phi$, not needed in Theorem~\ref{thm:Y_limsup}. When applying Theorem~\ref{thm:limsup_L}, we are compelled to assume $X$ lies in the domain of attraction of an $\alpha$-stable process. In regime (FS) this assumption yields sharp estimates on the density of $X_t$ as $t\da 0$, which in turn allows us to control the term $\p(0<X_t-st\le ut)$ for small $t>0$ in the Laplace exponent $\Phi_{s+u}-\Phi_s$ of $\wh \tau_{u+s}-\wh \tau_s$ as $u\da 0$, cf.~\eqref{eq:cf_tau0} above. The growth rate of the density of $X_t$ as $t\da 0$ is controlled is by \emph{lower} estimates on the small-jump activity of $X$ given in Lemma~\ref{lem:generalized_Picard} below, a refinement of the results in~\cite{picard_1997} for processes attracted to a stable process. In regime (IS) we require control over the negative tail probabilities $\p(X_t\le ut)$ for small $t>0$ appearing in the Laplace exponent $\Phi_u$ of $\wh \tau_u$ as $u\to-\infty$, cf.~\eqref{eq:cf_tau0}. The behaviour of these tails are controlled by \emph{upper} estimates of the small-jump activity of $X$, which are generally easier to obtain. In this case, moment bounds for the small-jump component of the L\'evy process and the convergence in Kolmogorov distance implied by the attraction to the stable law, give sufficient control over these tail probabilities. 

\subsection{Connections with the literature}


In~\cite{MR1747095}, Bertoin finds the law of the convex minorant of Cauchy process on $[0,1]$ and finds the exact asymptotic behaviour (in the form of a law of interated logarithm with a positive finite limit) for the derivative $C'$ at times $0$, $1$ and any $\tau_s$, $s\in\R$. The methods in~\cite{MR1747095} are specific to Cauchy process with its linear scaling property, making the approach hard to generalise. In fact, the results in~\cite{MR1747095} are a direct consequence of the fact that the vertex time process $\wh \tau$ has a Laplace transform $\Phi$ in~\eqref{eq:cf_tau0} that factorises as $\Phi_u(w)=\p(X_1\le u)\Phi_{\infty}(w)$, making $\wh \tau$ a gamma subordinator under the deterministic time-change $u\mapsto\p(X_1\le u)$, cf.~Example~\ref{ex:Cauchy} below.

Paul L\'evy showed  that the boundary of the convex hull of a planar Brownian motion has no corners at any point, see~\cite{MR0029120}, motivating~\cite{MR972777} to characterise the modulus of continuity of the derivative of that boundary. 
Given the recent characterisation of the smoothness of the convex hull of a L\'evy path~\cite{SmoothCM}, the results in the present paper are likewise motivated by the study of the  modulus of continuity of the derivative of the boundary in this context.

The literature on the growth rate of the path of a L\'evy process $X$ is vast, particularly for subordinators, see e.g.~\cite{MR0002054,MR1746300,MR292163,MR210190,MR968135,MR2480786,MR2591911}. The authors in~\cite{MR292163,MR210190} study the growth rate of a subordinator at $0$ and $\infty$. In~\cite{MR210190} (see also~\cite[Prop~4.4]{MR1746300}) Fristedt fully characterises the upper fluctuations of a subordinator in terms of its L\'evy measure, a result we generalise in Theorem~\ref{thm:Y_limsup} to processes that need not have stationary increments. In~\cite[Thm~4.1]{MR1746300} (see also~\cite[Thm~1]{MR292163}, a function essentially characterising the exact lower fluctuations of a subordinator is constructed in terms of its Laplace exponent. These methods are not easily generalised to the time-inhomogenous case since the Laplace exponent is now bivariate and there is neither a one-parameter lower function to propose nor a clear extension to the proofs. 

In~\cite{MR1113220}, Sato establishes results for time-inhomogeneous non-decreasing additive processes similar to our result in Section~\ref{sec:additive}. The assumptions in~\cite{MR1113220} are given in terms of the transition probabilities of the additive process, which are generally intractable, particularly for the processes $(\wh\tau_{-1/u})_{u>0}$ and $(\wh\tau_{u+s}-\wh\tau_s)_{u\ge 0}$, considered here. Our results are also easier to apply in other situations as well, for example, to fractional Poisson processes (see definition in~\cite{MR3943682}).

The upper fluctuations of a L\'evy process at zero have been the topic of numerous studies, see~\cite{MR2370602,MR2591911} for the one-sided problem and~\cite{MR0002054,MR968135,MR2480786} for the two-sided problem. Similar questions have been considered for more general time-homogeneous Markov processes~\cite{Franziska_Kuhn,SoobinKimLeeLIL}. The time-homogeneity again plays an important role in these results. The lower fluctuations of a stochastic process is only qualitatively different from the upper fluctuations if the process is positive. This is the reason why this problem has mostly only been addressed for subordinators (see the references above) and for the running supremum of a L\'evy process, see e.g.~\cite{MR3019488}. We stress that the results in the present paper, while related in spirit to this literature, are fundamentally different in two ways. First, we study the \emph{derivative} of the convex minorant of a L\'evy path on $[0,T]$, which (unlike e.g. the running supremum) cannot be constructed locally from the restriction of the path of the L\'evy process to any short interval. Second, the convex minorant and its derivative are neither Markovian nor time-homogeneous. In fact, the only result in our context prior to our work is in the Cauchy case~\cite{MR1747095}, where the derivative of the convex minorant is an explicit gamma process under a deterministic time-change, cf.~Example~\ref{ex:Cauchy} below.


\subsection{Organisation of the article}
In Section~\ref{sec:small-time-derivative} we present the main results of this article. We split the section in four, according to regimes (FS) and (IS) and whether the upper or lower fluctuations of $C'$ are being described. The implications of the results in Section~\ref{sec:small-time-derivative} for the L\'evy process and meander are covered in Subsection~\ref{subsec:applications}. In Section~\ref{sec:additive}, technical results for general time-inhomogeneous non-decreasing additive processes are established. 
Section~\ref{sec:proofs} recalls from~\cite{fluctuation_levy} the definition and law of the vertex time process $\tau$  and provides the proofs of the results stated in Section~\ref{sec:small-time-derivative}. 
Section~\ref{sec:concluding_rem} concludes the paper.

\section{Growth rate of the derivative of the convex minorant}
\label{sec:small-time-derivative}

Let $X=(X_t)_{t \ge 0}$ be an infinite activity L\'evy process (see~\cite[Def.~1.6, Ch.~1]{MR3185174}). Let $C=(C_t)_{t \in [0,T]}$ be the convex minorant of $X$ on $[0,T]$ for some $T>0$. Put differently, $C$ is the largest convex function that is piecewise smaller than the path of $X$ (see~\cite[Sec.~3,p.~8]{fluctuation_levy}). In this section we analyse the growth rate of the right derivative of $C$, denoted by $C'=(C_t')_{t\in(0,T)}$, near time $0$ and at the vertex time $\tau_s=\inf\{t>0:C'_t>s\}\wedge T$ of the slope $s\in \R$ (i.e., the first time $C'$ attains slope $s$). More specifically, we give sufficient conditions to identify the values of the possibly infinite limits (for appropriate increasing functions $f$ with $f(0)=0$): 
$\limsup_{t\da 0}(C'_{t+\tau_s}-s)/f(t)$ \& $\liminf_{t\da 0}(C'_{t+\tau_s}-s)/f(t)$ in the finite slope (FS) regime and  $\limsup_{t\da 0}|C'_t|f(t)$ \& $\liminf_{t\da 0}|C'_t|f(t)$ in the infinite slope (IS) regime. The values of these limits are constants in $[0,\infty]$ a.s. by Corollary~\ref{cor:trivial} below. We note that these limits are invariant under certain modifications of the law of $X$, which we describe in the following remark.

\begin{remark}
\label{rem:modify_nu}\phantom{empty}
\begin{itemize}[leftmargin=2em, nosep]
\item[{\nf(a)}] Let $\p$ be the probability measure on the space where $X$ is defined. If the limits $\limsup_{t\da0}|C'_t|f(t)$, $\liminf_{t\da0}|C'_t|f(t)$, $\limsup_{t\da 0}(C'_{t+\tau_s}-s)/f(t)$ and $\liminf_{t\da 0}(C'_{t+\tau_s}-s)/f(t)$ are $\p$-a.s. constant, then they are also $\p'$-a.s. constant with the same value for any probability measure $\p'$ absolutely continuous with respect to $\p$. In particular, we may modify the L\'evy measure of $X$ on the complement of any neighborhood of $0$ without affecting these limits (see e.g.~\cite[Thm~33.1--33.2]{MR3185174}).
\item[{\nf(b)}] We may add a drift process to $X$ without affecting the limits at $0$ since such a drift would only shift $|C'_t|$ by a constant value and $f(t)\to 0$ as $t \downarrow0$. Similarly, for the limits of $(C'_{t+\tau_s}-s)/f(t)$ as $t\da 0$, it suffices to analyse the post-minimum process (i.e., the vertex time $\tau_0$) of the process $(X_t-st)_{t\ge 0}$. For ease of reference, our results are stated for a general slope $s$.\qedhere
\end{itemize}
\end{remark}

\subsection{Regime (FS): lower functions at time \texorpdfstring{$\tau_s$}{tau}}
\label{subsec:lower-post-min}

The following theorem describes the lower fluctuations of $C'_{t+\tau_s}-s$ as $t \da 0$. Recall that $\mL^+(\mS)$ is the a.s. deterministic set of right-limit points of the set of slopes $\mS$.

\begin{theorem}\label{thm:post-min-lower}
Let $s\in\mL^+(\mS)$ and $f$ be continuous and increasing, satisfying $f(t)\le 1=f(1)$ for $t\in(0,1]$ and $f(0)=0=\lim_{c\da0}\limsup_{t\da0}f(ct)/f(t)$. Let $c>0$ and consider the following conditions: 
\begin{gather}
\label{eq:post-min-Pi-large}
\int_0^1 \p(0<(X_t-st)/t \le f(t/c))\frac{\D t}{t}<\infty,\\
\label{eq:post-min-Pi-var}
\int_0^1\E\left[\frac{t}{f^{-1}((X_t-st)/t)^2}\1_{\{f(t/2)<(X_t-st)/t\le 1\}}\right]\D t<\infty,\\
\label{eq:post-min-Pi-mean}
2^n\int_0^{2^{-n}} \p(f(t/2)<(X_t-st)/t\le f(2^{-n}))\D t\to 0,
\quad\text{as }n\to\infty.
\end{gather}
Then the following statements hold.
\begin{itemize}[leftmargin=2.5em, nosep]
\item[{\nf{(i)}}] If \eqref{eq:post-min-Pi-large}--\eqref{eq:post-min-Pi-mean} hold for $c=1$, then $\liminf_{t\da0}(C'_{t+\tau_s}-s)/f(t)=\infty$ a.s.
\item[{\nf(ii)}] If~\eqref{eq:post-min-Pi-large} fails for every $c>0$, then $\liminf_{t\da0}(C'_{t+\tau_s}-s)/f(t)=0$ a.s.
\item[\nf{(iii)}] If $\liminf_{t\da0}(C'_{t+\tau_s}-s)/f(t)>1$ a.s., then \eqref{eq:post-min-Pi-large} holds for any $c>1$.
\end{itemize}
\end{theorem}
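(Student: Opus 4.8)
The plan is to pass to an independent exponential time horizon, recast $\liminf_{t\da0}(C'_{t+\tau_s}-s)/f(t)$ as a fluctuation problem for a non-decreasing additive process near a boundary slope, and then run a first/second Borel--Cantelli dichotomy (this is exactly the division of labour between the additive-process results of Section~\ref{sec:additive}, Theorems~\ref{thm:limsup_L} and~\ref{thm:Y_limsup}). First I would invoke that the four limits of Section~\ref{sec:small-time-derivative} are unchanged when $T$ is replaced by an independent $\mathrm{Exp}(\lambda)$ variable (the horizon reduction carried out in Section~\ref{sec:proofs}), so that the vertex-time process $\wh\tau$ is the non-decreasing driftless additive process with Laplace exponent~\eqref{eq:cf_tau0}. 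Since $\wh C'$ is the right-inverse of $\wh\tau$ and, as $s\in\mL^+(\mS)$, one has $\tau_s=\wh\tau_s$ and $\wh C'_{\tau_s}=s$ by~\cite[Thm~1.1]{SmoothCM}, it follows that $\wh C'_{t+\tau_s}-s=L_t:=\inf\{u\ge0:Y_u>t\}$ is the right-inverse of the non-decreasing additive process $Y_u:=\wh\tau_{s+u}-\wh\tau_s$, whose increment over slopes $[u_1,u_2]$ is (by~\eqref{eq:cf_tau0}) driftless infinitely divisible with L\'evy measure $\frac{e^{-\lambda t}}{t}\,\p\bigl(u_1<(X_t-st)/t\le u_2\bigr)\,\D t$ on $t\in(0,\infty)$. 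As $L_t$ for $t\da0$ only feels jumps of $Y$ of arbitrarily small size, I would discard jumps of size $>\delta$ (a.s.\ finitely many near slope $0$; cf.\ Remark~\ref{rem:modify_nu}(a)), so that $e^{-\lambda t}\in[e^{-\lambda\delta},1]$ plays no role.

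The inverse relation between $L$ and $Y$, together with the hypothesis $f(0)=0=\lim_{c\da0}\limsup_{t\da0}f(ct)/f(t)$, yields the dictionary I would rely on: (a) if $Y_u\le\ve f^{-1}(u)$ eventually and $\limsup_{u\da0}f(\ve u)/f(u)<1/M$, then $L_t\ge Mf(t)$ eventually, so $Y_u/f^{-1}(u)\to0$ gives $\liminf_{t\da0}L_t/f(t)=\infty$; and (b) a jump of $Y$ of size $>\eta f(t_0)$ occurring at slope $\le\eta f(t_0)$ forces $L_{t_0}\le\eta f(t_0)$. For part~(i) I would use (a) with truncation level $h(u):=f^{-1}(u)$. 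Tonelli's theorem identifies the ``no large jump'' mass $\int_{0+}\!\int_{h(u)}^{\infty}\frac{e^{-\lambda t}}{t}q_t(u)\,\D t\,\D u$ ($q_t$ the density of $(X_t-st)/t$) with $\int_{0+}\p\bigl(0<(X_t-st)/t\le f(t)\bigr)\tfrac{\D t}{t}$, so \eqref{eq:post-min-Pi-large} with $c=1$ gives that a.s.\ $\Delta Y_{u'}\le f^{-1}(u')$ for all small $u'$, whence near $u=0$ the process $Y_u$ coincides with its truncation $Y^{\mathrm{tr}}_u:=\sum_{u'\le u}\Delta Y_{u'}\1_{\{\Delta Y_{u'}\le f^{-1}(u')\}}$. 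Again by Tonelli, $\E[Y^{\mathrm{tr}}_u]\le\int_0^{f^{-1}(u)}\p\bigl(f(t)<(X_t-st)/t\le u\bigr)\,\D t$ and $\Var(Y^{\mathrm{tr}}_u)\le\int_0^{f^{-1}(u)}t\,\p\bigl(f(t)<(X_t-st)/t\le u\bigr)\,\D t$; after the substitution $u=f(a)$ the first is controlled by \eqref{eq:post-min-Pi-mean} (so $\E[Y^{\mathrm{tr}}_u]/f^{-1}(u)\to0$), and, via Chebyshev and the first Borel--Cantelli lemma on a geometric slope grid, the second by \eqref{eq:post-min-Pi-var} (so $(Y^{\mathrm{tr}}_u-\E Y^{\mathrm{tr}}_u)/f^{-1}(u)\to0$ a.s.), the enlargements $f(t/2)$ in the lower cut-offs of \eqref{eq:post-min-Pi-var}--\eqref{eq:post-min-Pi-mean} absorbing the discretisation losses. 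Hence $Y_u/f^{-1}(u)\to0$ a.s., and (i) follows from (a).

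For parts (ii) and (iii) I would use item (b) through a second Borel--Cantelli argument. On a geometric size grid $t_n=\rho^{-n}$ ($\rho>1$ to be chosen) set $A_n=\{Y$ has a jump of size in $(t_{n+1},t_n]$ at slope $\le\eta f(t_{n+1})\}$; the $A_n$ are independent, being measurable with respect to the jump measure of $Y$ on the disjoint size-windows $(t_{n+1},t_n]$, and on $A_n$ one has $L_{t_{n+1}}\le\eta f(t_{n+1})$. Since $\p(A_n)\asymp\min\bigl\{1,\int_{t_{n+1}}^{t_n}\tfrac1t\p(0<(X_t-st)/t\le\eta f(t_{n+1}))\,\D t\bigr\}$ and $\eta f(t_{n+1})\ge\eta f(t/\rho)$ for $t\in(t_{n+1},t_n]$, divergence of $\int_{0+}\p(0<(X_t-st)/t\le f(t/c))\tfrac{\D t}{t}$ for a $c$ with $\limsup_{u\da0}f((\rho/c)u)/f(u)\le\eta$ forces $\sum_n\p(A_n)=\infty$, and Borel--Cantelli then gives $A_n$ infinitely often, i.e.\ $\liminf_{t\da0}L_t/f(t)\le\eta$ a.s. For (iii): if \eqref{eq:post-min-Pi-large} fails for some $c>1$, take $\rho\in(1,c)$ and $\eta=1$ (legitimate as $\rho/c<1$), obtaining $\liminf\le1$, the contrapositive of (iii). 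For (ii): given $\ve>0$ take $\rho=2$, then $c$ large enough that $\limsup_{u\da0}f((2/c)u)/f(u)<\ve$ (possible by the hypothesis on $f$) and $\eta=\ve$; since \eqref{eq:post-min-Pi-large} fails for every $c$ this gives $\liminf_{t\da0}L_t/f(t)\le\ve$ a.s., and intersecting over $\ve=1/k$ gives $\liminf=0$ a.s.

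The main obstacle is part~(i): checking that the three stated integral conditions---with their weights ($1/t$, $2^n$) and two-sided cut-offs written through $X_t-st$---genuinely dominate the intrinsic large-jump, truncated-mean and truncated-variance quantities of $Y$ after the Tonelli rearrangements and the passage between the continuous slope parameter and a geometric skeleton, and then extracting $\liminf_{t\da0}L_t/f(t)=\infty$ for \emph{all} $M$ from $Y_u/f^{-1}(u)\to0$, which relies crucially on the regularity hypothesis $\lim_{c\da0}\limsup_{t\da0}f(ct)/f(t)=0$. The second Borel--Cantelli half is comparatively routine once independence is secured by slicing in jump size rather than in slope.
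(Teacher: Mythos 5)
Your proof is correct and follows essentially the same route as the paper's. The paper's proof of this theorem is a one-liner invoking its general additive-process machinery — specifically Proposition~\ref{prop:Y_limsup} and Corollary~\ref{cor:L_liminf} (which in turn rest on Theorem~\ref{thm:Y_limsup}, Lemma~\ref{lem:upper_func_Y} and Lemma~\ref{lem:Y_vs_L}) — applied to $Y_u=\wh\tau_{u+s}-\wh\tau_s$ and $L_u=\wh C'_{u+\wh\tau_s}-s$ after the exponential-horizon reduction of Corollary~\ref{cor:trivial}. You inline exactly the content of those lemmas: for part~(i) the truncation-at-level-$f^{-1}$, first Borel--Cantelli removal of large jumps, and mean/variance control of the truncated process on a dyadic slope skeleton is precisely the argument of Lemma~\ref{lem:upper_func_Y} (the paper actually shows $\sum_n(\zeta_n-\E\zeta_n)^2$ has finite mean, rather than Chebyshev plus BC1, but this is the same estimate); the Tonelli rearrangements matching \eqref{eq:post-min-Pi-large}--\eqref{eq:post-min-Pi-mean} to the ``large jump'', truncated-variance and truncated-mean quantities of $Y$ are the same translation that underlies Corollary~\ref{cor:L_liminf}(ii). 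For parts~(ii)--(iii) the paper argues via the pathwise bound $Y_t\ge\Delta_t$ plus the fact that a Poisson random measure with infinite mean mass has a.s.\ infinitely many points (Theorem~\ref{thm:Y_limsup}(a), Remark~\ref{rem:Y_limsup}, Lemma~\ref{lem:Y_vs_L}); you instead slice the jump-size axis into a geometric grid to manufacture independent events $A_n$ and apply the second Borel--Cantelli lemma directly. This is the same underlying Poisson fact, packaged slightly differently, and your handling of the scaling between $f(t/c)$, $f(t/\rho)$ and $\eta$ via the hypothesis $\lim_{c\da0}\limsup_{t\da0}f(ct)/f(t)=0$ is correct.

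Two minor presentational points. You write the ``no large jump'' mass using a density $q_t$ of $(X_t-st)/t$; such a density need not exist in general, but the Tonelli computation goes through unchanged if you work directly with the measure $\Pi(\D u,\D t)=t^{-1}e^{-\lambda t}\,\p((X_t-st)/t\in\D u)\,\D t$, as the paper does. Also, for part~(iii) your conclusion is the slightly stronger contrapositive ``$\liminf\le 1$ a.s.''; this is consistent with the theorem since the $\liminf$ is an a.s.\ constant by Blumenthal's 0--1 law applied to $Y$ (the content of Corollary~\ref{cor:trivial}).
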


Some remarks are in order.

\begin{remark}
\label{rem:limsup_cond_post_min}\phantom{empty}
\begin{itemize}[leftmargin=2em, nosep]
\item[(a)] Any continuous regularly varying function $f$ of index $r>0$ satisfies the assumption in the theorem: $\lim_{c\da 0}\lim_{t\da 0}f(ct)/f(t)=\lim_{c\da 0}c^r=0$.
Moreover, the assumption $f(t)\le 1=f(1)$ for $t\in(0,1]$ is not necessary but makes  conditions~\eqref{eq:post-min-Pi-large}--\eqref{eq:post-min-Pi-mean} take a simpler form. 
\item[(b)] The proof of Theorem~\ref{thm:post-min-lower} is based on the analysis of the upper fluctuations of $\tau$ at slope $s$. Condition~\eqref{eq:post-min-Pi-large} ensures $(\tau_{u+s}-\tau_{s})_{u\ge 0}$ jumps finitely many times over the boundary $u\mapsto f^{-1}(u)$, condition~\eqref{eq:post-min-Pi-mean} makes the small-jump component of $(\tau_{u+s}-\tau_{s})_{u\ge 0}$  (i.e. the sum of the jumps at times $v\in[s,u+s]$ of size at most $f^{-1}(v)$) have a mean that tends to $0$ as $u\da 0$ and condition~\eqref{eq:post-min-Pi-var} controls the deviations of $(\tau_{u+s}-\tau_{s})_{u\ge 0}$ away from its mean.
\item[{(c)}] Note that~\eqref{eq:post-min-Pi-mean} holds if $\int_0^1\p(f(2^{-n}t/2)<(X_{2^{-n}t}-s2^{-n}t)/(2^{-n}t)\le f(2^{-n}))\D t\to 0$ as $n\to\infty$, which, by the dominated convergence theorem, holds if $\p(f(u/2)<(X_{u}-su)/u\le f(u/t))\to 0$ as $u\da 0$ for a.e. $t\in(0,1)$.
\item[{(d)}] Condition~\eqref{eq:post-min-Pi-var} in Theorem~\ref{thm:post-min-lower} requires access to the inverse $f^{-1}$ of the function $f$. In the special case when the function $f$ is concave, this assumption can be replaced with an assumption given in terms of $f$ (cf. Proposition~\ref{prop:Y_limsup} and Corollary~\ref{cor:L_liminf}). However, it is important to consider non-concave functions $f$, see Corollary~\ref{cor:power_func_liminf_post_min} below.\qedhere
\end{itemize}
\end{remark}

\subsubsection{Simple sufficient conditions for the assumptions of Theorem~\ref{thm:post-min-lower}
}\label{subsec:simp_suff_cond_tau_s} 

Let $f$ be as in Theorem~\ref{thm:post-min-lower}. By Theorem~\ref{thm:Y_limsup}(c) below (with the measure $\Pi(\D x,\D t)=\p((X_t-st)/t \in \D x)t^{-1}\D t$), the following condition implies~\eqref{eq:post-min-Pi-var}--\eqref{eq:post-min-Pi-mean}:
\begin{equation}
\label{eq:suff_low_post_min}
\int_0^1 \E\left[\frac{1}{f^{-1}((X_t-st)/t)}\1_{\{f(t/2)< (X_t-st)/t\le 1\}}\right]\D t <\infty.
\end{equation} 
If estimates on the density of $X_t$ are available (e.g., via assumptions on the generating triplet of $X$), \eqref{eq:suff_low_post_min} can be simplified further, see Corollary~\ref{cor:power_func_liminf_post_min} below. 

Throughout, we denote by $(\gamma,\sigma^2,\nu)$ the generating triplet of $X$ (corresponding to the cutoff function $x\mapsto\1_{(-1,1)}(x)$, see~\cite[Def.~8.2]{MR3185174}), where $\gamma \in \R$ is the drift parameter, $\sigma^2 \ge 0$ is the Gaussian coefficient and $\nu$ is the L\'evy measure of $X$ on $\R$. We also define the functions
\[
\ov\sigma^2(\ve)\coloneqq \sigma^2+\ov\sigma^2_+(\ve)+\ov\sigma^2_-(\ve),
\quad
\ov\sigma^2_+(\ve)\coloneqq\int_{(0,\ve)}x^2\nu(\D x),\quad
\ov\sigma^2_-(\ve)\coloneqq\int_{(-\ve,0)}x^2\nu(\D x),\quad
\text{for $\ve>0$.}
\]
Recall that, in regime (FS), we have $\sigma^2=0$ (see~\cite[Prop.~1.6]{SmoothCM}). Given two positive functions $g_1$ and $g_2$, we say $g_1(\ve)=\Oh(g_2(\ve))$ as $\ve\da 0$ if $\limsup_{\ve\da 0}g_1(\ve)/g_2(\ve)<\infty$. Similarly, we write $g_1(\ve)\approx g_2(\ve)$ as $\ve\da 0$ if $g_1(\ve)=\Oh(g_2(\ve))$ and $g_2(\ve)=\Oh(g_1(\ve))$.


\begin{corollary}
\label{cor:power_func_liminf_post_min}
Fix $\beta\in(0,1]$ and let $s\in\mL^+(\mS)$ and $f$ be as in Theorem~\ref{thm:post-min-lower}.
\begin{itemize}[leftmargin=2em, nosep]
\item[{\nf{(a)}}] If $\liminf_{\ve \da 0}\ve^{\beta-2}\ov\sigma^2(\ve)>0$, $f$ is differentiable with positive derivative $f'>0$ and the integrals $\int_0^1\int_{t/2}^1(f'(y)/y)t^{1-1/\beta}\D y\D t$ and $\int_0^1t^{-1/\beta}f(t)\D t$ are finite, then $\liminf_{t\da 0}(C'_{t+\tau_s}-s)/f(t)=\infty$ a.s.
\item[{\nf{(b)}}] Assume 
$\int_0^1 ((t^{-1/\beta}f(t))\wedge t^{-1})\D t=\infty$ and either of the following hold:
\begin{itemize}[leftmargin=2em, nosep]
\item[{\nf{(i)}}] $\ov\sigma^2(\ve)\approx\ve$ and $|\int_{(-1,1)\setminus(-\ve,\ve)}x\nu(\D x)|=\Oh(1)$ as $\ve\da 0$,
\item[{\nf{(ii)}}] $\beta\in(0,1)$ and $\ov\sigma^2_\pm(\ve)\approx\ve^{2-\beta}$ as $\ve\da 0$ for both signs of $\pm$,
\end{itemize}
then $\liminf_{t\da 0}(C'_{t+\tau_s}-s)/f(t)=0$ a.s. 
\end{itemize}
\end{corollary}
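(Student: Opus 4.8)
The strategy is to verify that the hypotheses of each part feed into the corresponding clauses of Theorem \ref{thm:post-min-lower}, after first translating the lower bound on $\ov\sigma^2(\ve)$ into quantitative control on the probabilities $\p(0<(X_t-st)/t\le x)$ for small $t$ and $x$. Throughout, by Remark \ref{rem:modify_nu}(a)--(b) we may center around the slope $s$ and truncate the L\'evy measure away from $0$, so we reduce to controlling the small-jump structure only. The key input is a density/tail estimate for $X_t-st$ near its typical fluctuation scale: the assumption $\liminf_{\ve\da0}\ve^{\beta-2}\ov\sigma^2(\ve)>0$ should, via Lemma \ref{lem:generalized_Picard} (the refinement of Picard's estimates quoted in the strategy section), give a bound of the form $\p(0<(X_t-st)/t\le a)=\Oh(t^{-1/\beta}\cdot (ta))$ — more precisely a bound on the density of $X_t$ of order $t^{-1/\beta}$ on the relevant range — since $\ve^{\beta-2}\ov\sigma^2(\ve)\gtrsim1$ means the natural spatial scale at time $t$ is $\asymp t^{1/\beta}$. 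I would isolate this estimate as the first step.

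For part (a), using the density bound $p_{X_t}(y)=\Oh(t^{-1/\beta})$ locally, one estimates each of the three integrals in \eqref{eq:suff_low_post_min}, \eqref{eq:post-min-Pi-large} with $c=1$. Condition \eqref{eq:post-min-Pi-large} becomes $\int_0^1 t^{-1/\beta} f(t)\,\D t<\infty$ after writing $\p(0<(X_t-st)/t\le f(t))\le \int_0^{tf(t)}p_{X_t-st}(y)\,\D y=\Oh(t^{-1/\beta}\cdot t f(t))$ and noting the extra factor $t$ is absorbed by the $\D t/t$; this matches the stated hypothesis $\int_0^1 t^{-1/\beta}f(t)\,\D t<\infty$. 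For \eqref{eq:suff_low_post_min} I would substitute $y=(X_t-st)/t$, bound $1/f^{-1}(y)$ on $(f(t/2),1]$ and change variables using $y=f(u)$, $\D y=f'(u)\,\D u$, so that $\E[\cdots]\le \int_{t/2}^{1} (1/u) \cdot t\cdot p_{X_t}(\text{scale }tf(u))\cdot f'(u)\,\D u=\Oh(t^{1-1/\beta}\int_{t/2}^1 (f'(u)/u)\,\D u)$; integrating in $t$ gives exactly the finiteness of $\int_0^1\int_{t/2}^1 (f'(u)/u)t^{1-1/\beta}\,\D u\,\D t$ assumed in (a). Hence \eqref{eq:post-min-Pi-large}--\eqref{eq:post-min-Pi-mean} hold with $c=1$ (the last two via the sufficient condition \eqref{eq:suff_low_post_min}, invoking Theorem \ref{thm:Y_limsup}(c) as in Subsection \ref{subsec:simp_suff_cond_tau_s}), and Theorem \ref{thm:post-min-lower}(i) yields the $\infty$ conclusion.

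For part (b), the goal is instead to show \eqref{eq:post-min-Pi-large} \emph{fails for every} $c>0$ and apply Theorem \ref{thm:post-min-lower}(ii). Here I need a matching \emph{lower} bound $\p(0<(X_t-st)/t\le f(t/c))\gtrsim (t^{-1/\beta}f(t))\wedge t^{-1}$-type quantity (the truncation at $t^{-1}$ reflecting that a probability is at most $1$, so the contribution to $\int_0^1\p(\cdots)\D t/t$ is at least $\asymp\int_0^1((t^{-1/\beta}f(t))\wedge t^{-1})\D t=\infty$). Under (i), $\ov\sigma^2(\ve)\approx\ve$ with a bounded drift correction puts $X$ in the small-time domain of attraction of a Cauchy process with normalisation $\asymp t$, so $(X_t-st)/t$ converges in law to a nondegenerate limit with positive density at $0$, giving $\p(0<(X_t-st)/t\le a)\gtrsim a\wedge 1$ for small $t$; under (ii), $\ov\sigma^2_\pm(\ve)\approx\ve^{2-\beta}$ on both sides puts $X$ in the domain of attraction of a (two-sided) $\beta$-stable process with normalisation $\asymp t^{1/\beta}$, so $\p(0<(X_t-st)/t\le a)\asymp (t^{-1/\beta}a)\wedge 1$ for small $t$ on the relevant range, by the local limit theorem / Kolmogorov-distance convergence. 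Either way, plugging $a=f(t/c)\asymp f(t)$ (using the assumption $\lim_{c\da0}\limsup_{t\da0}f(ct)/f(t)=0$, which forces $f(t/c)\ge f(t)$ for $c\ge1$ and, combined with monotonicity, gives the needed comparability for fixed $c$) shows the integral diverges, as required.

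The main obstacle I expect is Step 1: extracting the two-sided estimates on $\p(0<(X_t-st)/t\le a)$ — sharp enough in both the upper direction (for (a)) and the lower direction (for (b)) — from the crude hypotheses on $\ov\sigma^2$ and $\ov\sigma^2_\pm$ alone, handling the drift term $st$ and the cutoff correctly, and correctly identifying the window of $(t,a)$ on which the bound $\p\asymp (t^{-1/\beta}a)\wedge1$ is valid. This is where Lemma \ref{lem:generalized_Picard}, the attraction hypotheses, and the moment/Kolmogorov-distance arguments flagged in the strategy section do the real work; once that estimate is in hand, the remaining integral manipulations are routine changes of variables matching the stated integrability conditions.
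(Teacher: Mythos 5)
Your overall strategy — verify the hypotheses of Theorem~\ref{thm:post-min-lower} via density estimates for $X_t$ at scale $t^{-1/\beta}$ — is the paper's, and your integral computations for part (a) are essentially correct modulo the citation. However, Lemma~\ref{lem:generalized_Picard} is the wrong tool there: it requires $X\in\mZ_{\alpha,\rho}$, a regular-variation/attraction hypothesis that is not assumed in the corollary. The upper bound $\sup_{x}p_X(t,x)\le Ct^{-1/\beta}$ comes directly from Picard (cf.~\cite[Thm~3.1]{picard_1997}), which holds under exactly the stated hypothesis $\liminf_{\ve\da 0}\ve^{\beta-2}\ov\sigma^2(\ve)>0$ and does not need attraction.

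Part (b) has two genuine gaps. First, you claim that the order-of-magnitude conditions (b-i) and (b-ii) place $X$ in the small-time domain of attraction of a Cauchy or $\beta$-stable process and then invoke Kolmogorov-distance convergence. This does not follow: $\ov\sigma^2(\ve)\approx\ve$ and $\ov\sigma^2_\pm(\ve)\approx\ve^{2-\beta}$ are two-sided $\Oh$-bounds, not regular variation, so no weak limit is guaranteed. The paper instead applies~\cite[Thm~4.3]{picard_1997}, which under precisely these $\approx$-conditions yields a lower density bound $p_X(t,x)\ge Ct^{-1/\beta}$ for $|x|\le t^{1/\beta}$, giving $\p(0<X_t-st\le tf(t/c))\ge C\big(t^{1-1/\beta}f(t/c)\wedge 1\big)$ without any attraction assumption. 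Second, your claim that $f(t/c)\asymp f(t)$ is not justified by the hypotheses of Theorem~\ref{thm:post-min-lower} (and the inequality ``$f(t/c)\ge f(t)$ for $c\ge 1$'' is reversed, since $f$ is increasing and $t/c\le t$): for instance $f(t)=e^{1-1/t}$ satisfies all the hypotheses yet $f(t/c)/f(t)\to\infty$ as $t\da 0$ for any $c<1$. Since Theorem~\ref{thm:post-min-lower}(ii) requires~\eqref{eq:post-min-Pi-large} to fail for \emph{every} $c>0$, one must argue, as the paper does, by a change of variables that $\int_0^1\big((t^{-1/\beta}f(t/c))\wedge t^{-1}\big)\D t$ is either finite for all $c>0$ or infinite for all $c>0$.
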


We stress that the sufficient conditions in Corollary~\ref{cor:power_func_liminf_post_min} are all in terms of the characteristics of the L\'evy process $X$ and the function $f$.

\begin{remark}
\label{rem:power_func_liminf_post_min}\phantom{empty}
\begin{itemize}[leftmargin=2em, nosep]
\item[{{(a)}}] The assumptions in Corollary~\ref{cor:power_func_liminf_post_min} are satisfied by most processes in the class $\mZ_{\alpha,\rho}$ of L\'evy processes in the small-time domain of attraction of an $\alpha$-stable distribution, see Subsection~\ref{subsec:upper-post-min} below (cf.~\cite[Eq.~(8)]{MR3784492}). Thus, the assumptions of part (a) in Corollary~\ref{cor:power_func_liminf_post_min} hold for any $X\in\mZ_{\alpha,\rho}$ and $\beta<\alpha$ (by Karamata's theorem~\cite[Thm~1.5.11]{MR1015093}, we can take $\beta=\alpha$ if the normalising function $g$ of $X$ satisfies $\liminf_{t\da0}t^{-1/\alpha}g(t)>0$). Moreover, the assumptions of cases (b-i) and (b-ii) hold for processes in the domain of normal attraction (i.e. if the normalising function equals $g(t)= t^{1/\alpha}$ for all $t>0$) with $\rho\in(0,1)$ and $\beta=\alpha\in(0,1]$, see~\cite[Thm~2]{MR3784492}. In particular, these assumptions are satisfied by stable processes with $\alpha\in(0,1]$ and $\rho\in(0,1)$.
\item[{{(b)}}] Both integrals in part (a) of Corollary~\ref{cor:power_func_liminf_post_min} are finite or infinite simultaneously whenever $f'$ is regularly varying at $0$ with nonzero index by Karamata's theorem~\cite[Thm~1.5.11]{MR1015093}. Thus, in that case, under the conditions of either (b-i) or (b-ii), the limit $\liminf_{t\da 0}(C'_{t+\tau_s}-s)/f(t)$ equals $0$ or $\infty$ according to whether $\int_0^1t^{-1/\beta}f(t)\D t$ is infinite or finite, respectively.
\item[{{(c)}}] The case $\beta>1$ is not considered in Corollary~\ref{cor:power_func_liminf_post_min}(a) and (b-ii) since in this case we would have $\mL^+(\mS)=\emptyset$ by~\cite[Prop.~1.6]{SmoothCM}. \qedhere
\end{itemize}
\end{remark}

\begin{proof}[Proof of Corollary~\ref{cor:power_func_liminf_post_min}]
Assume without loss of generality that $s=0\in\mL^+(\mS)$ (equivalently, we consider the process $(X_t-st)_{t\ge 0}$ for $s\in\mL^+(\mS)$).

(a) Our assumptions and~\cite[Thm~3.1]{picard_1997} show that the density $x\mapsto p_X(t,x)$ of $X_t$ exists for $t>0$ and moreover $\sup_{x\in\R}p_X(t,x)\le Ct^{-1/\beta}$ for some $C>0$ and all $t\in(0,1]$. Thus,~\eqref{eq:suff_low_post_min} is implied by 
\begin{equation}
\label{eq:simple_suff_cond_post_min_density}
\int_0^1 \int_{tf(t/2)}^t\frac{1}{f^{-1}(x/t)}t^{-1/\beta} \D x \D t
=\int_0^1 \int_{t/2}^1\frac{f'(y)}{y}t^{1-1/\beta}\D y \D t<\infty,
\end{equation} 
where we have used the change of variable $x=tf(y)$. Similarly, the bound on the density of $X_t$ shows that condition~\eqref{eq:post-min-Pi-large} holds if $\int_0^1 t^{-1/\beta}f(t)\D t<\infty$. Thus, the result follows from Theorem~\ref{thm:post-min-lower}.

(b) In either case (i) or (ii), our assumptions and~\cite[Thm~4.3]{picard_1997} show that $Ct^{-1/\beta}\le p_X(t,x)$ 
for some $C>0$ and all $|x|\le t^{1/\beta}$. Thus $\p(0<X_t\le tf(t/c))\ge ((tf(t/c))\wedge t^{1/\beta})Ct^{-1/\beta}$, implying that~\eqref{eq:post-min-Pi-large} fails for some $c>0$ whenever $\int_0^1 ((t^{-1/\beta}f(t/c))\wedge t^{-1})\D t=\infty$. A simple change of variables shows that this integral is either finite for all $c>0$ or infinite for all $c>0$. The result then follows from Theorem~\ref{thm:post-min-lower}(ii).
\end{proof}

The following is another simple corollary of Theorem~\ref{thm:post-min-lower}. This result can also be established using similar arguments to those used in~\cite[Cor.~3]{MR1747095}, see the discussion ensuing the proof of~\cite[Cor.~3]{MR1747095}.

\begin{corollary}
\label{cor:stable_liminf_post_min}
Let $X$ be a Cauchy process, $f$ be as in Theorem~\ref{thm:post-min-lower} and pick $s\in\R$. Then the limit $\liminf_{t\da 0}(C'_{t+\tau_s}-s)/f(t)$ equals $0$ (resp. $\infty$) a.s. if $\int_0^1 t^{-1}f(t)\D t$ is infinite (resp. finite).
\end{corollary}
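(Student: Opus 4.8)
The plan is to derive Corollary~\ref{cor:stable_liminf_post_min} directly from Theorem~\ref{thm:post-min-lower}, using only the exact self-similarity of the Cauchy process. First I would note that $X_{ct}\eqd cX_t$ for every $c>0$ (this holds whether or not $X$ carries a drift, since a drift merely translates the Cauchy law), so $X_t/t\eqd X_1$ for all $t>0$; consequently $\int_0^1\p(X_t/t\in(s,s+\ve))t^{-1}\D t=\p(X_1\in(s,s+\ve))\int_0^1 t^{-1}\D t=\infty$ for every $\ve>0$, whence $s\in\mL^+(\mS)$ for all $s\in\R$ and Theorem~\ref{thm:post-min-lower} applies. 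By Remark~\ref{rem:modify_nu}(b) I may assume $s=0$ (the shifted process $(X_t-st)_{t\ge0}$ is again a Cauchy process). Write $p$ for the density of $X_1$; then $p\le1/\pi$ on $\R$, and $p\ge m>0$ on $[0,1]$.

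The crux is that, for the Cauchy process, condition~\eqref{eq:post-min-Pi-large} is governed \emph{exactly} by the finiteness of $\int_0^1 t^{-1}f(t)\D t$, uniformly over $c>0$, while~\eqref{eq:post-min-Pi-var}--\eqref{eq:post-min-Pi-mean} hold \emph{unconditionally}. For~\eqref{eq:post-min-Pi-large}: since $\p(0<X_1\le z)\le z/\pi$ for all $z\ge0$, the substitution $u=t/c$ gives $\int_0^1\p(0<X_t/t\le f(t/c))\,t^{-1}\D t\le\pi^{-1}\int_0^{1/c}u^{-1}f(u)\D u$, finite precisely when $\int_0^1 t^{-1}f(t)\D t<\infty$ (the tail $\int_1^{1/c}u^{-1}f(u)\D u$ is harmless, $f$ being finite-valued); conversely, using $p\ge m$ on $[0,1]$ and restricting to $t<c\wedge1$, where $f(t/c)\le1$, one obtains the matching lower bound $m\int_0^{(1/c)\wedge1}u^{-1}f(u)\D u$. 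Thus~\eqref{eq:post-min-Pi-large} holds for every $c>0$ when $\int_0^1 t^{-1}f(t)\D t<\infty$, and fails for every $c>0$ when $\int_0^1 t^{-1}f(t)\D t=\infty$.

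For~\eqref{eq:post-min-Pi-var}--\eqref{eq:post-min-Pi-mean} I would verify the simple sufficient condition~\eqref{eq:suff_low_post_min}. By self-similarity and $p\le1/\pi$, its integrand is at most $\pi^{-1}\int_{f(t/2)}^1 f^{-1}(x)^{-1}\D x$, and the Lebesgue--Stieltjes change of variables $x=f(y)$ turns this into $\pi^{-1}\int_{t/2}^1 y^{-1}\D f(y)$; Fubini then gives $\int_0^1(\cdots)\D t\le\pi^{-1}\int_0^1 y^{-1}\big((2y)\wedge1\big)\D f(y)\le2\pi^{-1}\big(f(1)-f(0)\big)=2/\pi<\infty$. (Alternatively,~\eqref{eq:post-min-Pi-var} and~\eqref{eq:post-min-Pi-mean} can be checked directly by the same scaling and substitution, the latter using $f(2^{-n})\to f(0)=0$.)

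It then remains to assemble the pieces: if $\int_0^1 t^{-1}f(t)\D t<\infty$, conditions~\eqref{eq:post-min-Pi-large}--\eqref{eq:post-min-Pi-mean} all hold with $c=1$, so Theorem~\ref{thm:post-min-lower}(i) yields $\liminf_{t\da0}(C'_{t+\tau_s}-s)/f(t)=\infty$ a.s.; if $\int_0^1 t^{-1}f(t)\D t=\infty$, condition~\eqref{eq:post-min-Pi-large} fails for every $c>0$, so Theorem~\ref{thm:post-min-lower}(ii) yields the limit $0$ a.s. The only step needing genuine care, and hence the main obstacle, is the uniform-in-$c$ two-sided comparison of~\eqref{eq:post-min-Pi-large} with $\int_0^1 t^{-1}f(t)\D t$, for which the local lower bound $p\ge m$ near $0$ is essential; everything else follows at once from the exact self-similarity of the Cauchy process.
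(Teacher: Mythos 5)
Your proof is correct and takes essentially the same route as the paper's: reduce to $s=0$, use the scaling identity $X_t/t\eqd X_1$, verify~\eqref{eq:post-min-Pi-var}--\eqref{eq:post-min-Pi-mean} through the sufficient condition~\eqref{eq:suff_low_post_min} by a Fubini-type computation, and reduce~\eqref{eq:post-min-Pi-large} to the finiteness of $\int_0^1 t^{-1}f(t/c)\,\D t$ (uniformly over $c>0$) via the two-sided bound on the Cauchy density. The only cosmetic difference is that you pass through the density of $X_1$ to bound the expectation in~\eqref{eq:suff_low_post_min}, whereas the paper simply swaps $\E$ and $\int_0^1\D t$ to get $\le 2\,\p(X_1\in(0,1])$ directly; both are one-line computations. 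You also make explicit the preliminary check that $s\in\mL^+(\mS)$ for every $s\in\R$, which the paper leaves implicit — a worthwhile addition.
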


\begin{proof}
Assume without loss of generality that $s=0$. Then the law of $X_t/t$ does not depend on $t>0$ and hence the integral in~\eqref{eq:suff_low_post_min} equals
\[
\int_0^1\E\bigg[\frac{\1_{\{t/2<f^{-1}(X_1)\le 1\}}}{f^{-1}(X_1)}\bigg]\D t
=\E\bigg[\int_0^1\frac{\1_{\{t/2<f^{-1}(X_1)\le 1\}}}{f^{-1}(X_1)}\D t\bigg]
\le 2\p(X_1\in(0,1])<\infty.
\]
Moreover, condition~\eqref{eq:post-min-Pi-large} simplifies to $\int_0^1\p(0<X_1\le f(t/c))t^{-1}\D t<\infty$, which is equivalent to the integral $\int_0^1t^{-1}f(t/c)\D t$ being finite since $X_1$ has a bounded density that is bounded away from zero on $[0,1]$. The change of variables $t'=t/c$ shows that this integral is either finite for all $c>0$ or infinite for all $c>0$. Thus, Theorem~\ref{thm:post-min-lower} gives the result.
\end{proof}

\subsection{Regime (FS): upper functions at time \texorpdfstring{$\tau_s$}{tau}}
\label{subsec:upper-post-min}

The upper fluctuations of $C'_{t+\tau_s}-s$ are harder to describe than the lower fluctuations studied in Subsection~\ref{subsec:lower-post-min} above. The main reason for this is that in Theorem~\ref{thm:upper_fun_C'_post_min} below the $\limsup$ of $C'$ at a vertex time $\tau_s$ can be expressed in terms of the $\liminf$ of the vertex time process $\tau$, which requires strong two-sided control on the Laplace exponent $\Phi_{u+s}(w)-\Phi_{s}(w)$, defined in~\eqref{eq:cf_tau0}, of the variable $\tau_{u+s}-\tau_s$ 
as $w\to\infty$ and $u\da 0$. (In the proof of Theorem~\ref{thm:post-min-lower}, $\limsup$ of the vertex time process $\tau$ is needed, which is easier to control.) In turn, by~\eqref{eq:cf_tau0}, this requires sharp two-sided estimates on the probability $\p(0<X_t-st\le ut)$ as a function of $(u,t)$ for small $u,t>0$. In particular, it is important to have strong control on the density of $X_t$ for small $t>0$ on the ``pizza slice'' $\{(t,x):s<x/t\le u+s\}$ as $u\da 0$. We establish these estimates for the processes in the domain of attraction of an $\alpha$-stable process, leading to Theorem~\ref{thm:upper_fun_C'_post_min} below. 

We denote by $\mZ_{\alpha,\rho}$ the class of L\'evy processes in the small-time domain of attraction of an $\alpha$-stable process with positivity parameter $\rho\in[0,1]$ (see~\cite[Eq.~(8)]{MR3784492}). In the case $\alpha<1$, relevant in the regime (FS) at slope $s$ equal to the natural drift $\gamma_0$, for each L\'evy process $X\in\mZ_{\alpha,\rho}$ there exists a normalising function $g$ that is regularly varying at $0$ with index $1/\alpha$ and an $\alpha$-stable process $(Z_u)_{u\in[0,T]}$ with $\rho=\p(Z_1>0)\in[0,1]$ such that the weak convergence $((X_{ut}-\gamma_0ut)/g(t))_{u\in[0,T]}\cid(Z_u)_{u\in[0,T]}$ holds as $t\da 0$. Given $X\in\mZ_{\alpha,\rho}$ with normalising function $g$, we define $G(t)\coloneqq t/g(t)$ for $t\in(0,\infty)$. 

\begin{theorem}\label{thm:upper_fun_C'_post_min}
Suppose $X\in\mZ_{\alpha,\rho}$ for some $\alpha\in(0,1)$ and $\rho\in(0,1]$. Define $f: (0,1) \to (0,\infty)$ through $f(t)\coloneqq 1/G(t\log^p (1/t))$, $t\in(0,1)$, for some $p\in\R$. Then the following hold for $s=\gamma_0$:
\begin{itemize}[leftmargin=2.5em, nosep]
\item[\nf(i)] if $p>1/\rho$, then $\limsup_{t\da 0}(C'_{t+\tau_s}-s)/f(t)=0$ a.s.,
\item[\nf(ii)] if $p<1/\rho$, then $\limsup_{t\da 0}(C'_{t+\tau_s}-s)/f(t)=\infty$ a.s.
\end{itemize}
\end{theorem}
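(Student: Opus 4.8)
The strategy is to reduce the statement about $C'$ near the vertex time $\tau_s$ to a statement about the lower fluctuations of the vertex time process $\wh\tau$ near slope $s$, following the three-step reduction outlined in the strategy section: (1) replace the deterministic horizon $T$ by an independent exponential horizon (so $C'$ becomes $\wh C'$) using Remark~\ref{rem:modify_nu}; (2) by Remark~\ref{rem:modify_nu}(b), reduce to $s=\gamma_0=0$, i.e. the post-minimum of $(X_t-st)_{t\ge 0}$, so that the process $Y_u\coloneqq \wh\tau_{u+s}-\wh\tau_s=\wh\tau_u$ is a non-decreasing additive process started at $0$ with Laplace exponent $\Phi_u$ in~\eqref{eq:cf_tau0}; (3) use the identity $\wh C'_{t+\tau_s}-s=L_t$ for the right-inverse $L$ of $Y$, so that $\limsup_{t\da0}(C'_{t+\tau_s}-s)/f(t)=\limsup_{t\da 0}L_t/f(t)$, and note that the upper fluctuations of $L$ are governed by the \emph{lower} fluctuations of $Y$. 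Concretely, $\limsup_{t\da0}L_t/f(t)=0$ a.s.\ iff $Y_u\ge f^{-1}(u)$ for all small $u$, i.e.\ $\liminf_{u\da0}Y_u/f^{-1}(u)\ge 1$ (and $=\infty$ a.s.\ iff $\liminf_{u\da 0}Y_u/f^{-1}(u)=0$); these are exactly the dichotomies furnished by Theorem~\ref{thm:limsup_L} applied to $Y=\wh\tau$.

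The second main ingredient is the precise two-sided control of the Laplace exponent $\Phi_u(w)$ as $u\da 0$ and $w\to\infty$. By~\eqref{eq:cf_tau0}, $\Phi_u(w)=\int_0^\infty(1-e^{-wt})e^{-\lambda t}\p(X_t\le ut)t^{-1}\D t$, and since here $X\in\mZ_{\alpha,\rho}$ with $\alpha<1$ and $s=\gamma_0$, the scaling $((X_{ct})/g(c))\cid Z$ as $c\da0$ gives $\p(X_t\le ut)=\p(X_t/g(t)\le u\, G(t))$; because $G$ is regularly varying of index $1-1/\alpha<0$ and $u$ small, we need $\p(Z_1\le y)$ as $y\to0$. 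Using $\rho=\p(Z_1>0)>0$ and the known small-argument asymptotics of the one-sided stable tail (for $\alpha\in(0,1)$, $\p(Z_1\le y)$ decays like $\exp(-c\,y^{-\alpha/(1-\alpha)})$ up to polynomial factors as $y\downarrow 0$ on the subordinator side, and is bounded below by a constant times the stable density when jumps of both signs are present) — more carefully, via Lemma~\ref{lem:generalized_Picard} giving sharp density estimates for $X_t$ on the ``pizza slice'' $\{s<x/t\le u+s\}$ — one extracts matching upper and lower bounds $\Phi_u(w)\asymp \Psi(u,w)$ for an explicit $\Psi$ built from $G$. The crossover governing whether $f(t)=1/G(t\log^p(1/t))$ is an upper function is whether $\int \Phi_{\cdot}(\cdot)$-type integrals converge, and tracking the $\log^p$ factor through $G$ (regularly varying, so $G(t\log^p(1/t))\sim G(t)\log^{p(1-1/\alpha)}(1/t)$ up to slowly varying corrections) shows the relevant series $\sum_n \exp(-c\,n^{\rho p}\cdot(\text{slowly varying}))$ or similar converges iff $\rho p>1$, i.e.\ $p>1/\rho$ — this is the source of the threshold. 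The Borel–Cantelli / Kolmogorov-three-series mechanism inside Theorem~\ref{thm:limsup_L} then yields (i) when $p>1/\rho$ and (ii) when $p<1/\rho$.

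In more detail, the argument runs: first verify $s=\gamma_0\in\mL^+(\mS)$ for $X\in\mZ_{\alpha,\rho}$ with $\alpha<1$, $\rho\in(0,1]$ (this holds since $\int_0^1\p(X_t/t>\gamma_0)t^{-1}\D t=\int_0^1\rho\,t^{-1}\D t+o(1)=\infty$ by the stable-attraction scaling, using the characterisation recalled just before Theorem~\ref{thm:post-min-lower}); then check the hypotheses of Theorem~\ref{thm:limsup_L} — namely the two-sided bound on $\Phi_u(w)$ — are met by $Y=\wh\tau$ with the time-dependent L\'evy measure $\Pi(\D x,\D u)=\p((X_x)/x\in u+\D s\cdot\ldots)$ read off from~\eqref{eq:cf_tau0}; then compute the convergence/divergence of the defining integral of the lower function from Theorem~\ref{thm:limsup_L} with the candidate boundary $u\mapsto f^{-1}(u)$, substituting the asymptotics of $G$ and of the stable tail, and reading off the dichotomy at $p=1/\rho$. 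The main obstacle, and where the real work lies, is Step 2's quantitative estimate: obtaining \emph{sharp two-sided} asymptotics for $\p(0<X_t-st\le ut)$ — equivalently for the density of $X_t$ on the shrinking cone near slope $\gamma_0$ — uniformly as both $u\da0$ and $t\da0$. This is exactly why the hypothesis $X\in\mZ_{\alpha,\rho}$ (not merely $s\in\mL^+(\mS)$) is needed, and it is where Lemma~\ref{lem:generalized_Picard} (the refinement of Picard's estimates) does the heavy lifting; the polynomial corrections to the stable tail and the slowly varying corrections to $G$ must be shown not to affect the location of the threshold, only to be absorbed into the sub-logarithmic slack acknowledged in the theorem's statement.
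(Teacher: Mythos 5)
Your roadmap matches the paper's in its skeleton — reduce to an exponential horizon, set $Y_u=\wh\tau_{u+s}-\wh\tau_s$ with right-inverse $L_t=\wh C'_{t+\wh\tau_s}-s$, apply Theorem~\ref{thm:limsup_L}, and feed it two-sided asymptotics of the Laplace exponent $\Psi$ (the paper's Lemma~\ref{lem:asymp_equiv_Psi_domstable_post_min}, resting on the density estimates of Lemma~\ref{lem:generalized_Picard}). But the quantitative step that actually locates the threshold $p=1/\rho$ is wrong in two places, and these are precisely the places where the work is.

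First, the stable-tail asymptotic you invoke is the wrong one for this regime. You cite the small-argument decay $\p(Z_1\le y)\approx\exp(-cy^{-\alpha/(1-\alpha)})$ as $y\da 0$; that is relevant to regime (IS). Here $\alpha<1$, so $G$ has negative index and blows up at $0$: after rescaling $t\mapsto t\kappa_n$ the argument of the limiting stable law is $\delta t^{1-1/\alpha}\to\infty$, and the operative fact is the \emph{saturation} $\p(0<Z<y)\to\rho$ as $y\to\infty$, which against $\int_0^1(1-e^{-xt})t^{-1}\D t\sim\log x$ produces the logarithmic growth $\Psi_{s_n}(u_n)\sim\rho\log\big(u_nG^{-1}(s_n^{-1})\big)$ of Lemma~\ref{lem:asymp_equiv_Psi_domstable_post_min}(i) — this is exactly what inserts $\rho$ into the threshold. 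Second, and as a consequence, the series you guess, $\sum_n\exp\big(-cn^{\rho p}\cdot(\text{slowly varying})\big)$, has the wrong functional form: that series converges for every $\rho p>0$, not only for $\rho p>1$, so it cannot produce the claimed dichotomy. With the concrete choices $\theta_n=e^n$ and $\phi(u)=\gamma/u$ (so $\theta_nt_n\equiv\gamma$), the logarithmic estimate gives $\Psi_{f(t_n)}(\theta_n)\sim\rho p\log\log\theta_n=\rho p\log n$, and the relevant series is $\sum_n\exp\big(\gamma-(1-\ve)\rho p\log n\big)\asymp\sum_n n^{-(1-\ve)\rho p}$ — polynomial in $n$, not exponential — which converges precisely when $\rho p>1$. (For part (ii) the paper switches to $\theta_n=e^{n^\sigma}$ with $\sigma>1$, $r=1$ in $\phi$, to force divergence of the complementary sum and summability of $\Psi_{f(t_{n+1})}(\theta_n)$ via Lemma~\ref{lem:asymp_equiv_Psi_domstable_post_min}(ii).) Without the correct tail asymptotic, the explicit $\theta_n$, and the resulting polynomial series, the claimed threshold does not follow from your plan — you have asserted it rather than derived it.
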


The class $\mZ_{\alpha,\rho}$ is quite large and the assumption $X\in\mZ_{\alpha,\rho}$ is essentially reduced to the L\'evy measure of $X$ being regularly varying at $0$, see~\cite[\S4]{MR3784492} for a full characterisation of this class. In particular, $\alpha$ agrees with the Blumenthal--Getoor index $\beta_\BG$ defined in~\eqref{eq:beta} below. Moreover, for $\alpha<1$ and $\rho \in (0,1]$, the assumption $X\in\mZ_{\alpha,\rho}$ implies that $X$ is of finite variation with $\p(X_t-\gamma_0t>0)\to\rho$ as $t\da 0$, implying  $\mL^+(\mS)=\{\gamma_0\}$ by~\cite[Prop.~1.3 \& Cor.~1.4]{SmoothCM}. 

Note that the function $f$ in Theorem~\ref{thm:upper_fun_C'_post_min} is regularly varying at $0$ with index $1/\alpha-1$. The appearance of the positivity parameter $\rho$, a nontrivial function of the L\'evy measure of $X$, in Theorem~\ref{thm:upper_fun_C'_post_min} suggests that the upper fluctuations of $C'$ at time $\tau_s$ (for $s=\gamma_0$) are more delicate than its lower fluctuations described in Theorem~\ref{thm:lower_fun_C'}. Indeed, if $X\in\mZ_{\alpha,\rho}$ is in the domain of normal attraction (i.e. $g(t)=t^{1/\alpha}$) and $\rho\in(0,1)$, then the fluctuations of $C'$ at vertex time $\tau_s$, characterised by Corollary~\ref{cor:power_func_liminf_post_min}(a) \& (b-ii) (with $\beta=\alpha$) and Remark~\ref{rem:power_func_liminf_post_min}(a), do not involve parameter $\rho$. In particular, by Theorem~\ref{thm:upper_fun_C'_post_min} and Corollary~\ref{cor:power_func_liminf_post_min}(b-ii), we have $\liminf_{t\da 0}(C'_{t+\tau_s}-s)/f(t)=0$ and $\limsup_{t\da 0}(C'_{t+\tau_s}-s)/f(t)=\infty$ a.s. for $f(t)=t^{1/\alpha-1}\log^{q}(1/t)$ and any $q\in[-1,(1/\alpha-1)/\rho)$, demonstrating the gap between the lower and upper fluctuations of $C'$ at vertex time $\tau_s$. 

\begin{remark}
\label{rem:exclusions-tau}\phantom{empty}
\begin{itemize}[leftmargin=2em, nosep]
\item[(a)] The case where $X$ is attracted to Cauchy process with $\alpha=1$ is expected to hold for the functions $f$ in Theorem~\ref{thm:upper_fun_C'_post_min}. For such $X\in\mZ_{1,\rho}$, a multitude of cases arise including $X$ having (i) less activity (e.g., $X$ is of finite variation), (ii) similar amount of activity (i.e., $X$ is in the domain of normal attraction) or (iii) more activity than Cauchy process (see, e.g.~\cite[Ex.~2.1--2.2]{SmoothCM}). In terms of the normalising function $g$ of $X$, these cases correspond to the limit $\lim_{t\da 0}t^{-1/\alpha}g(t)$ being equal to: (i) zero, (ii) a finite and positive constant or (iii) infinity. (Recall that in cases (ii) and (iii) $X$ is strongly eroded with $\mL^+(\mS)=\R$, see~\cite[Ex.~2.1--2.2]{SmoothCM}, and in case (i) $X$ may be strongly eroded, by~\cite[Thm~1.8]{SmoothCM}, or of finite variation with $\mL^+(\mS)=\{\gamma_0\}$ by~\cite[Prop~138]{SmoothCM} and the fact that $\lim_{t\da0}\p(X_t>0)=\rho\in(0,1)$.) However, we stress that our methodology can be used to obtain a description of the lower fluctuations of $C'$ at $\tau_s$ in cases (i), (ii) and (iii). This would require an application of Theorem~\ref{thm:limsup_L} along with two-sided estimates of the Laplace exponent $\Phi$ of the vertex time process in~\eqref{eq:cf_tau0}, generalising  Lemma~\ref{lem:asymp_equiv_Psi_domstable_post_min} to the case $\alpha=1$. In the interest of brevity we do not give the details of this extension.
\item[(b)] The boundary case $p=1/\rho$ can be analysed along similar lines. In fact, our methods can be used to get increasingly sharper results, determining the value of $\limsup_{t\da 0}(C'_{t+\tau_s}-s)/f(t)$ for functions $f$ containing powers of iterated logarithms, when stronger control over the densities of the marginals of $X$ is available. Such refinements are possible when $X$ is a stable process cf. Section~\ref{sec:concluding_rem}. In particular, we may prove the following law of iterated logarithm given in~\cite[p.~54]{MR1747095} for a Cauchy process $X$ with density $x\mapsto p_X(t,x)$ at time $t>0$: for any $s\in\R$ and the function $f(t)=(\log\log\log(1/t))/\log(1/t)$, 
we have $\limsup_{t\da 0}(C'_{t+\tau_s}-s)/f(t)=1/p_X(1,s)$ a.s. \qedhere
\end{itemize}
\end{remark}



\subsection{Regime (IS): upper functions at time {\nf{0}}}
\label{subsec:upper_fun_C}

Throughout this subsection we assume $X$ has infinite variation, equivalent to $\liminf_{t\da0}C'_t=-\infty$ a.s.~\cite[Sec.~1.1.2]{SmoothCM}. The following theorem describes the upper fluctuations of $C'_t$ as $t\da 0$. 

\begin{theorem}
\label{thm:C'_limsup}
Let $f$ be continuous and increasing with  $f(0)=0=\lim_{c\da0}\limsup_{t\da0}f(ct)/f(t)$ and $f(t)\le 1=f(1)$ for $t\in(0,1]$. Let $c>0$, denote $F(t)\coloneqq t/f(t)$ for $t>0$ and consider the conditions
\begin{gather}
\label{eq:C'_large}
\int_0^1 \p(X_t\le -cF(t))\frac{\D t}{t}<\infty,\\
\label{eq:C'_var}
\int_0^1 \E[(X_t/F(t))^2\1_{\{-2F(t)<X_t\le -t\}}]\frac{\D t}{t}<\infty,\\
\label{eq:C'_mean}
2^n\int_0^{2^{-n}}
    \p(-t/f(2^{-n})\ge X_t>-2F(t/2))\D t
\to 0, \quad \text{ as }n \to \infty.
\end{gather}
Then the following statements hold.
\begin{itemize}[leftmargin=2.5em, nosep]
\item[{\nf(i)}] If~\eqref{eq:C'_large}--\eqref{eq:C'_mean} hold for $c=1$ and $f$ is concave, then $\limsup_{t \da 0} |C_t'|f(t)=0$ a.s.
\item[{\nf(ii)}] If~\eqref{eq:C'_large} fails for all $c>0$, then $\limsup_{t\da0} |C_t'|f(t)=\infty$ a.s.
\item[{\nf(iii)}] If $\limsup_{t \da 0} |C_t'|f(t)<1$ a.s., then~\eqref{eq:C'_large} holds for any $c>1$.
\end{itemize}
\end{theorem}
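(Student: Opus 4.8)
The plan is to carry out the three-step reduction sketched in the introduction and then invoke the general fluctuation result of Section~\ref{sec:additive}.

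\textbf{Step 1: reduction to an exponential horizon.} By Corollary~\ref{cor:trivial} the limit $\limsup_{t\da0}|C_t'|f(t)$ is a.s.\ constant, and since the local behaviour of $C'$ at $0$ is determined by an arbitrarily short initial piece of the path, replacing the deterministic horizon $T$ by an independent $\mathrm{Exp}(\lambda)$ variable leaves this constant unchanged; this is the reduction carried out in Section~\ref{sec:proofs}. Write $\wh C'$ and $\wh\tau$ for the resulting right-derivative and vertex time process, the latter a non-decreasing additive process with Laplace exponent~\eqref{eq:cf_tau0}. Since $X$ has infinite variation, $\wh\tau_u>0$ for all $u\in\R$ and $\wh\tau_u\da0$ as $u\to-\infty$, so $Y_v\coloneqq\wh\tau_{-1/v}$, $v>0$ (with $Y_0\coloneqq0$), is a genuine non-decreasing additive process started at $0$.

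\textbf{Step 2: from $\wh C'$ to the inverse of $Y$.} Since $\wh\tau$ is the right-inverse of $\wh C'$, one checks that $\{v>0:Y_v>t\}=\{v:v\ge1/|\wh C_t'|\}$, whence $|\wh C_t'|=1/L_t$, where $L$ is the right-continuous inverse of $Y$. Thus $|\wh C_t'|f(t)=f(t)/L_t$, and statements (i)--(iii) become, respectively, $\liminf_{t\da0}L_t/f(t)=\infty$ a.s., $\liminf_{t\da0}L_t/f(t)=0$ a.s., and ``$\liminf_{t\da0}L_t/f(t)>1$ a.s.\ $\Rightarrow$~\eqref{eq:C'_large} for $c>1$''. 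Using $L_t\ge v\iff Y_v\le t$ and the substitution $v=f(t)/c$ (so $t=f^{-1}(cv)$), these are exactly statements about whether $Y$ stays below, or crosses infinitely often near $0$, the boundary $v\mapsto f^{-1}(cv)$; the hypothesis $\lim_{c\da0}\limsup_{t\da0}f(ct)/f(t)=0$ is what lets the ``for all $c>0$'' quantifiers pass between $L$ and $Y$.

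\textbf{Step 3: characteristics of $Y$ and the general theorem.} From~\eqref{eq:cf_tau0}, $Y$ has Laplace exponent $\Psi_v(w)=\Phi_{-1/v}(w)=\int_0^\infty(1-e^{-wt})e^{-\lambda t}\p(X_t\le-t/v)\tfrac{\D t}{t}$, hence a time-dependent L\'evy measure $\Pi$ on $(0,\infty)^2$ with $\Pi\big((0,v]\times(r,\infty)\big)=\int_r^\infty e^{-\lambda t}\p(X_t\le-t/v)\tfrac{\D t}{t}$ (the factor $e^{-\lambda t}$ being bounded above and below on $(0,1)$, hence immaterial). I would then feed $\Pi$ and the boundary $v\mapsto f^{-1}(cv)$ into the appropriate part of Theorem~\ref{thm:Y_limsup}, equivalently Corollary~\ref{cor:L_liminf} on the lower fluctuations of the inverse $L$: its Borel--Cantelli condition, controlling the number of jumps of $Y$ over the boundary, becomes~\eqref{eq:C'_large} after the change of variables $v\leftrightarrow f(t)$ with $t$ the jump size; its variance condition becomes~\eqref{eq:C'_var}, the weight $(X_t/F(t))^2$ and the region $\{-2F(t)<X_t\le-t\}$ being precisely the images of the small-jump variance weight $(t/f^{-1}(cv))^2$ and of the small-jump region (with the lower cutoff $-t$ corresponding to restricting to slopes $v\le1$); and its mean condition becomes the dyadic condition~\eqref{eq:C'_mean}. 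Parts (ii) and (iii) then follow at once. For part (i), the concavity of $f$ (i.e.\ convexity of $f^{-1}$) is used to recast the variance and mean conditions, which Theorem~\ref{thm:Y_limsup} supplies naturally in $f^{-1}$-form, into the $F$-form stated here, cf.\ Remark~\ref{rem:limsup_cond_post_min}(d) and Proposition~\ref{prop:Y_limsup}.

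\textbf{Main obstacle.} The crux is the bookkeeping in Step~3: keeping straight the two roles of $t$ (the jump size of $Y$ versus the time parameter inside $\p(X_t\le\cdot)$), the reparametrisation $v=-1/u$, and $f$ versus $f^{-1}$, while verifying that the three hypotheses of Theorem~\ref{thm:Y_limsup} are equivalent, up to the admissible constants, to~\eqref{eq:C'_large}--\eqref{eq:C'_mean}. A related subtlety is justifying that jumps of $Y$ of size $\le t$ (faces of length $\le t$ with slope near $-1/v$) can be absorbed into the compensator, so that $-t$ is a legitimate truncation level; this is where the infinite-variation assumption and the behaviour of $\Phi_v(w)$ as $w\to\infty$ enter.
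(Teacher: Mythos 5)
Your three-step plan — reduce to an exponential horizon via Corollary~\ref{cor:trivial}, pass to $Y_v=\wh\tau_{-1/v}$ with inverse $L$ satisfying $|\wh C'_t|=1/L_t$, substitute $\Pi(\D s,\D t)=t^{-1}e^{-\lambda t}\p(-t/X_t\in\D s)\D t$ with $h=f^{-1}$ into Corollary~\ref{cor:L_liminf} (using Proposition~\ref{prop:Y_limsup} for the concave case) — is exactly the paper's proof. One small correction to your ``main obstacle'' paragraph: the cutoff $-t$ in \eqref{eq:C'_var}--\eqref{eq:C'_mean} is not a compensator truncation level; it simply encodes the constraint $s\in(0,1]$ in Theorem~\ref{thm:Y_limsup} under the substitution $s=-t/X_t$, so no absorption argument or control on $\Phi_v(w)$ as $w\to\infty$ is needed there.
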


Some remarks are in order.

\begin{remark}
\label{rem:limsup_cond_3}\phantom{empty}
\begin{itemize}[leftmargin=2em, nosep]
\item[{(a)}] Any continuous regularly varying function $f$ of index $r>0$ satisfies the assumption in the theorem, 
see Remark~\ref{rem:limsup_cond_post_min}(a) above.
\item[{(b)}]
The proof of Theorem~\ref{thm:C'_limsup} is based on the analysis of the upper fluctuations of the vertex time $\tau_{-1/u}$ as $u\da0$. The interpretation and purpose of conditions~\eqref{eq:C'_large}--\eqref{eq:C'_mean} are analogous to those of conditions~\eqref{eq:post-min-Pi-large}--\eqref{eq:post-min-Pi-mean}, respectively, see Remark~\ref{rem:limsup_cond_post_min}(b) above.
\item[{(c)}] Note that~\eqref{eq:C'_mean} holds if $\int_0^1\p(-2F(2^{-n}t/2)< X_{2^{-n}t}\le -tF(2^{-n}))\D t\to 0$ as $n\to\infty$, which, by the dominated convergence theorem, is the case if $\p(-2F(u/2)<X_{u}\le -tF(u/t))\to 0$ as $u\da 0$ for a.e. $t\in(0,1)$.
\item[{(d)}] The assumed concavity of $f$ in part (ii) can be dropped by modifying assumption~\eqref{eq:C'_var} into a condition involving the inverse of $f$ (cf. Corollary~\ref{cor:L_liminf} and Proposition~\ref{prop:Y_limsup}). We do not make this explicit in the statement of Theorem~\ref{thm:C'_limsup} because the functions of interest in this context are typically concave.\qedhere
\end{itemize}
\end{remark}

\subsubsection{Simple sufficient conditions for the assumptions of Theorem~\ref{thm:C'_limsup}}
\label{subsec:simple_suff_cond_time_0}

The tail probabilities of $X_t$ appearing in the assumptions of Theorem~\ref{thm:C'_limsup} are not analytically available in general. In this subsection we present sufficient conditions, in terms of the generating triplet $(\gamma,\sigma^2,\nu)$ of $X$, implying the assumptions in~\eqref{eq:C'_large}--\eqref{eq:C'_mean} of Theorem~\ref{thm:C'_limsup}. Recall that $\ov\sigma^2(\ve) 
=\sigma^2 + \int_{(-\ve,\ve)}x^2\nu(\D x)$ for $\ve>0$, and define:
\begin{equation}
\label{eq:ov_functions}
\ov \gamma(\ve)
\coloneqq \gamma-\int_{(-1,1)\setminus(-\ve,\ve)}x\nu(\D x),
\qquad
\ov\nu(\ve) 
\coloneqq \nu(\R\setminus(-\ve,\ve)),
\quad\text{for all }\ve>0.
\end{equation}
Let $f$ and $F$ be as in Theorem~\ref{thm:C'_limsup} and note that $F(t)\in(0,1]$ since $f$ is concave with $f(1)=1$. The inequalities in Lemma~\ref{lem:upper_tail_bound} (with $p=2$, $\ve=F(t)\in(0,1]$ and $K=cF(t)$), applied to $\p(|X_t|\ge cF(t))$ and $\E[\min\{X_t^2,4F(t)^2\}]\ge\E[X_t^2\1_{\{|X_t|\le 2F(t)\}}]$, show that the condition 
\begin{equation}
\label{eq:C'_suff_var}
\int_0^1 \big[F(t)^{-2}\big(\ov \gamma^2(F(t))t+\ov\sigma^2(F(t))\big) + \ov\nu(F(t))\big]\D t
<\infty,
\end{equation}
implies~\eqref{eq:C'_large}--\eqref{eq:C'_var}. Similarly, by Remark~\ref{rem:limsup_cond_3}(c) and Lemma~\ref{lem:upper_tail_bound}, the following condition implies~\eqref{eq:C'_mean}:
\begin{equation}
\label{eq:C'_suff_mean}
\big[F(t)^{-2}\big(\ov \gamma^2(F(t))t+\ov\sigma^2(F(t))\big) 
    + \ov\nu(F(t))\big]t
\to 0,
\qquad\text{as }t\da 0.
\end{equation}
These simplifications lead to the following corollary.

\begin{corollary}
\label{cor:power_func_limsup}
Suppose $\ov\nu(\ve)+\ve^{-2}\ov\sigma^2(\ve)+\ve^{-1}|\ov\gamma(\ve)|=\Oh(\ve^{-\beta})$ as $\ve\da 0$ for some $\beta\in[1,2]$ and, as before, let $F(t)=t/f(t)$. If we have $F(t)^{-\beta}t\to 0$ as $t\to 0$ and $\int_0^1F(t)^{-\beta}\D t<\infty$, then $\limsup_{t\da 0}|C'_t|f(t)=0$ a.s.
\end{corollary}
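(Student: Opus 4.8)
The plan is to reduce the claim to the two simplified sufficient conditions \eqref{eq:C'_suff_var} and \eqref{eq:C'_suff_mean} recorded just before the corollary (recall that in this subsection $f$ is concave, so $F(t)=t/f(t)\in(0,1]$ and these simplifications, based on the tail bound of Lemma~\ref{lem:upper_tail_bound}, are available), and then to invoke Theorem~\ref{thm:C'_limsup}(i). The first step is to unpack the standing hypothesis: since the three summands $\ov\nu(\ve)$, $\ve^{-2}\ov\sigma^2(\ve)$ and $\ve^{-1}|\ov\gamma(\ve)|$ are all nonnegative, the bound $\ov\nu(\ve)+\ve^{-2}\ov\sigma^2(\ve)+\ve^{-1}|\ov\gamma(\ve)|=\Oh(\ve^{-\beta})$ forces each of them separately, giving $\ov\nu(\ve)=\Oh(\ve^{-\beta})$, $\ov\sigma^2(\ve)=\Oh(\ve^{2-\beta})$ and $\ov\gamma^2(\ve)=\Oh(\ve^{2-2\beta})$ as $\ve\da 0$.

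Next I would substitute $\ve=F(t)$ into these estimates and evaluate the bracket appearing in \eqref{eq:C'_suff_var} and \eqref{eq:C'_suff_mean}: for $t$ near $0$,
\[
F(t)^{-2}\bigl(\ov\gamma^2(F(t))\,t+\ov\sigma^2(F(t))\bigr)+\ov\nu(F(t))
=\Oh\bigl(F(t)^{-2\beta}t\bigr)+\Oh\bigl(F(t)^{-\beta}\bigr).
\]
The only mildly delicate point is the drift contribution $F(t)^{-2\beta}t$: writing it as $F(t)^{-\beta}\cdot\bigl(F(t)^{-\beta}t\bigr)$ and using the hypothesis $F(t)^{-\beta}t\to 0$, the second factor stays bounded near $0$, so in fact $F(t)^{-2\beta}t=\Oh(F(t)^{-\beta})$ near $0$. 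Away from $0$, i.e.\ on any interval $[\delta,1]$, continuity and positivity of $f$ keep $F$ bounded away from $0$, hence the whole bracket is bounded there. Consequently the integral in \eqref{eq:C'_suff_var} is dominated by a constant times $1+\int_0^1 F(t)^{-\beta}\D t<\infty$, which establishes \eqref{eq:C'_suff_var}, and therefore conditions \eqref{eq:C'_large}--\eqref{eq:C'_var}, in particular for $c=1$.

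For \eqref{eq:C'_suff_mean} I would multiply the same bracket by $t$: the three terms become $\Oh\bigl((F(t)^{-\beta}t)^2\bigr)$, $\Oh\bigl(F(t)^{-\beta}t\bigr)$ and $\Oh\bigl(F(t)^{-\beta}t\bigr)$ respectively, all of which vanish as $t\da 0$ since $F(t)^{-\beta}t\to 0$; this is exactly \eqref{eq:C'_suff_mean}, hence \eqref{eq:C'_mean}. With \eqref{eq:C'_large}--\eqref{eq:C'_mean} now in hand for $c=1$ and $f$ concave, Theorem~\ref{thm:C'_limsup}(i) gives $\limsup_{t\da 0}|C_t'|f(t)=0$ a.s., as claimed.

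I do not expect a genuine obstacle: the argument is essentially bookkeeping on top of Lemma~\ref{lem:upper_tail_bound}. The one place that requires care is the drift term, whose contribution scales like $F(t)^{-2\beta}t$ rather than the ``clean'' rate $F(t)^{-\beta}$ of the Gaussian and jump parts; it is precisely for this term that \emph{both} hypotheses on $F$ — integrability of $F(\cdot)^{-\beta}$ on $(0,1)$ and $F(t)^{-\beta}t\to 0$ — are needed, and one must also be a little mindful that everything is verified for the specific value $c=1$ demanded by Theorem~\ref{thm:C'_limsup}(i) rather than merely for some $c>0$.
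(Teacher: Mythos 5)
Your proof is correct and follows essentially the same route as the paper's: both verify the sufficient conditions \eqref{eq:C'_suff_var} and \eqref{eq:C'_suff_mean} by substituting $\ve=F(t)$ into the triplet estimates and then invoke Theorem~\ref{thm:C'_limsup}(i). You spell out the drift bookkeeping (writing $F(t)^{-2\beta}t=F(t)^{-\beta}\cdot(F(t)^{-\beta}t)$ and appealing to $F(t)^{-\beta}t\to 0$) more explicitly than the paper, which simply notes the bounds and says "follows similarly," but the substance is identical.
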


\begin{proof}
By virtue of Theorem~\ref{thm:C'_limsup}(i), it suffices to verify~\eqref{eq:C'_suff_var} and~\eqref{eq:C'_suff_mean}. By assumption, we have $[F(t)^{-2}\ov\sigma^2(F(t))+\ov\nu(F(t))]t=\Oh(F(t)^{-\beta}t)$  and $F(t)^{-2}\ov\gamma(F(t))^2t^2=\Oh((F(t)^{-\beta}t)^2)$, which tend to $0$ as $t\da 0$, implying~\eqref{eq:C'_suff_mean}. Condition~\eqref{eq:C'_suff_var} follows similarly, completing the proof.
\end{proof}


Define the Blumenthal--Getoor index $\beta_\BG\in[0,2]$ of $X$~\cite{MR0123362} as follows:
\begin{equation}
\label{eq:beta}
\beta_\BG \coloneqq  \inf\{q\in[0,2]\,:\,I_q<\infty\},
\qquad\text{where}\qquad
I_q\coloneqq \int_{(-1,1)\setminus\{0\}}|x|^q\nu(\D x),
\quad
q>0.
\end{equation}
Note that, in our setting, $X$ has infinite variation and hence $\beta_\BG\ge 1$. Since $I_\beta<\infty$ for any $\beta>\beta_\BG$, \cite[Lem.~1]{LevySupSim} shows that $\beta$ satisfies the assumptions of Corollary~\ref{cor:power_func_limsup}. Hence $\limsup_{t\da0}|C'_t|t^{p}=0$ a.s. for any $p>1-1/\beta_\BG\in[0,1/2]$ by Corollary~\ref{cor:power_func_limsup}.

Stronger results are possible when stronger conditions are imposed on the law of $X$. For instance, for stable processes we have the following consequence of Theorem~\ref{thm:C'_limsup}.

\begin{corollary}
\label{cor:stable_limsup}
Let $X$ be an $\alpha$-stable process with $\alpha\in[1,2)$. Then the following statements hold.
\begin{itemize}[leftmargin=2em, nosep]
\item[\nf{(a)}] If $t\mapsto t^{-1/\alpha}F(t)$ is bounded as $t\da 0$, then $\limsup_{t\da 0}|C'_t|f(t)=\infty$ a.s. 
\item[\nf{(b)}] If $t^{-1/\alpha}F(t)\to\infty$ as $t\da0$ and $X$ is not spectrally positive, then the limit $\limsup_{t\da 0}|C'_t|f(t)$ is equal to $\infty$ (resp. $0$) a.s. if the integral $\int_0^1F(t)^{-\alpha}\D t$ is infinite (resp. finite). 
\end{itemize}
\end{corollary}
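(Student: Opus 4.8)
The plan is to derive everything from Theorem~\ref{thm:C'_limsup} by exploiting the $\alpha$-stable scaling $X_t\eqd t^{1/\alpha}X_1$. Write $\ov G(x)\coloneqq\p(X_1\le -x)$ and $a(t)\coloneqq t^{-1/\alpha}F(t)=t^{1-1/\alpha}/f(t)$, so that $\p(X_t\le -cF(t))=\ov G(c\,a(t))$; more generally this scaling rewrites each of~\eqref{eq:C'_large}--\eqref{eq:C'_mean} as a statement about the tail and the truncated second moment of the fixed variable $X_1$. Note that $a$ is bounded near $0$ in case (a), while $a(t)\to\infty$ as $t\da0$ in case (b). For (b) I will use the standard stable tail asymptotics: since $\alpha\in[1,2)$ and $X$ is not spectrally positive, $\ov G(x)\sim c_-x^{-\alpha}$ as $x\to\infty$ for some $c_->0$ (the constant being proportional to the $\nu$-mass of $(-\infty,0)$, see e.g.~\cite{MR1745764}), while $\ov G$ is bounded near $0$ and $X_1$ has a bounded continuous density on $\R$; for (a) I only need that every nondegenerate $\alpha$-stable law with $\alpha\in[1,2)$ has full support $\R$.

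For part (a) and for the divergent branch of part (b) (i.e.\ $\int_0^1F(t)^{-\alpha}\D t=\infty$) I would apply Theorem~\ref{thm:C'_limsup}(ii), so it suffices to show that~\eqref{eq:C'_large} fails for every $c>0$. In case (a), if $a(t)\le M$ for all small $t$, then $\p(X_t\le -cF(t))=\ov G(ca(t))\ge\ov G(cM)>0$, whence $\int_0^1\p(X_t\le -cF(t))t^{-1}\D t=\infty$. In the divergent branch of (b), $\ov G(ca(t))\sim c_-c^{-\alpha}a(t)^{-\alpha}=c_-c^{-\alpha}tF(t)^{-\alpha}$ as $t\da0$, so $\p(X_t\le -cF(t))t^{-1}\sim c_-c^{-\alpha}F(t)^{-\alpha}$ and the integral in~\eqref{eq:C'_large} is infinite exactly when $\int_0^1F(t)^{-\alpha}\D t=\infty$. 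In both situations~\eqref{eq:C'_large} fails for all $c>0$ and Theorem~\ref{thm:C'_limsup}(ii) gives $\limsup_{t\da0}|C'_t|f(t)=\infty$ a.s.

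For the convergent branch of part (b), where $a(t)\to\infty$ and $\int_0^1F(t)^{-\alpha}\D t<\infty$, I would verify~\eqref{eq:C'_large}--\eqref{eq:C'_mean} for $c=1$ and conclude via Theorem~\ref{thm:C'_limsup}(i). Condition~\eqref{eq:C'_large} is immediate from the asymptotics above. For~\eqref{eq:C'_mean} I would use the reduction in Remark~\ref{rem:limsup_cond_3}(c): it suffices that $\p(-2F(u/2)<X_u\le -tF(u/t))\to0$ as $u\da0$ for a.e.\ $t\in(0,1)$, and this follows by bounding the probability above by $\p(X_u\le -tF(u/t))=\ov G(t^{1-1/\alpha}a(u/t))$, which tends to $0$ because $a(u/t)\to\infty$ and $X_1$ is a proper random variable. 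The real work is~\eqref{eq:C'_var}: the stable scaling turns its integrand into $(t^{2/\alpha}/F(t)^2)\,\E\big[X_1^2\1_{\{-2a(t)<X_1\le -t^{1-1/\alpha}\}}\big]\,t^{-1}$, and integration by parts using $\ov G(x)\sim c_-x^{-\alpha}$ (together with boundedness of $\ov G$ near $0$; when $\alpha=1$ the lower cutoff $t^{1-1/\alpha}\equiv1$ stays bounded away from $0$, but the computation is unchanged) yields $\E\big[X_1^2\1_{\{-2a<X_1\le -t^{1-1/\alpha}\}}\big]\asymp a^{2-\alpha}$ as $t\da0$. Substituting $a(t)=t^{-1/\alpha}F(t)$ collapses the integrand to a constant multiple of $F(t)^{-\alpha}$, so~\eqref{eq:C'_var} holds. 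Theorem~\ref{thm:C'_limsup}(i) then gives $\limsup_{t\da0}|C'_t|f(t)=0$ a.s.; its concavity hypothesis on $f$ is satisfied for the concave $f$ relevant here, and otherwise one invokes the non-concave variant indicated in Remark~\ref{rem:limsup_cond_3}(d), whose $f^{-1}$-version of~\eqref{eq:C'_var} is checked by the same scaling identity.

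The only genuinely non-routine step will be the truncated second-moment estimate $\E\big[X_1^2\1_{\{-2a<X_1\le -t^{1-1/\alpha}\}}\big]\asymp a^{2-\alpha}$ and the subsequent bookkeeping that reduces all three of~\eqref{eq:C'_large}--\eqref{eq:C'_var} to the single integral test $\int_0^1F(t)^{-\alpha}\D t<\infty$; the remainder is a direct unwinding of the $\alpha$-stable scaling through Theorem~\ref{thm:C'_limsup}.
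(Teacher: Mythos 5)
Your proof is correct, and the overall strategy is the same as the paper's: use the stable scaling $X_t\eqd t^{1/\alpha}X_1$ to rewrite~\eqref{eq:C'_large} as an integral of $\ov G(c\,t^{-1/\alpha}F(t))/t$, settle part~(a) and the divergent branch of~(b) by Theorem~\ref{thm:C'_limsup}(ii), and settle the convergent branch by Theorem~\ref{thm:C'_limsup}(i). The one place you diverge from the paper is in how the convergent branch's conditions~\eqref{eq:C'_var}--\eqref{eq:C'_mean} are verified. The paper does not touch~\eqref{eq:C'_var}--\eqref{eq:C'_mean} directly; instead it invokes the triplet-based sufficient conditions~\eqref{eq:C'_suff_var}--\eqref{eq:C'_suff_mean} of Subsection~\ref{subsec:simple_suff_cond_time_0}, which follow from the elementary small-jump/Markov bound of Lemma~\ref{lem:upper_tail_bound}; plugging in $\ov\nu(\ve)\asymp\ve^{-\alpha}$, $\ov\sigma^2(\ve)\asymp\ve^{2-\alpha}$, $\ov\gamma(\ve)=\Oh(\ve^{1-\alpha})$ (constant when $\alpha=1$ for a strictly stable, hence symmetric, process) and using $t^{-1/\alpha}F(t)\to\infty$ reduces everything to $\int_0^1 F(t)^{-\alpha}\D t<\infty$. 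You instead verify~\eqref{eq:C'_var} from scratch by scaling the integrand to $a(t)^{-2}t^{-1}\E[X_1^2\1_{\{-2a(t)<X_1\le -t^{1-1/\alpha}\}}]$ and establishing $\E[X_1^2\1_{\{-2a<X_1\le -\cdot\}}]\asymp a^{2-\alpha}$ via the tail $\ov G(x)\sim c_-x^{-\alpha}$, and you handle~\eqref{eq:C'_mean} via Remark~\ref{rem:limsup_cond_3}(c). Both routes are sound; the paper's is slightly shorter because the generic triplet reduction has already been recorded, while yours is self-contained and makes the role of the stable tail and the cancellation to $F(t)^{-\alpha}$ explicit. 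Your remark about concavity of $f$ in Theorem~\ref{thm:C'_limsup}(i) is an appropriate caveat which the paper leaves implicit, since Subsection~\ref{subsec:simple_suff_cond_time_0} already works under the standing assumption that $f$ is concave.
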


\begin{proof}
The scaling property of $X$ gives $\p(X_t\le -cF(t))=\p(X_1\le -ct^{-1/\alpha}F(t))$ for any $c,t>0$. If $t\mapsto t^{-1/\alpha}F(t)$ is bounded, then $\liminf_{t\da 0}\p(X_t\le -cF(t))>0$ making~\eqref{eq:C'_large} fail for all $c>0$. In that case, we have $\limsup_{t\da 0}|C'_t|f(t)=\infty$ a.s. by Theorem~\ref{thm:C'_limsup}(ii), proving part (a). 

To prove part (b), suppose $X$ is not spectrally positive and let $t^{-1/\alpha}F(t)\to\infty$ as $t\da 0$. Then $x^\alpha\p(X_1\le -x)$ converges to a positive constant as $x\to\infty$, implying the following equivalence: $\int_0^1t^{-1}\p(X_t\le -ct^{-1/\alpha}F(t))\D t<\infty$ if and only if $\int_0^1 F(t)^{-\alpha}\D t<\infty$, where we note that the last integral does not depend of $c>0$. If $\int_0^1 F(t)^{-\alpha}\D t<\infty$, then~\eqref{eq:C'_suff_var}--\eqref{eq:C'_suff_mean} hold and Theorem~\ref{thm:C'_limsup}(i) gives $\limsup_{t\da 0}|C'_t|f(t)=0$ a.s. If instead $\int_0^1 F(t)^{-\alpha}\D t=\infty$, then $\int_0^1t^{-1}\p(X_t\le -ct^{-1/\alpha}F(t))\D t=\infty$ for all $c>0$, so Theorem~\ref{thm:C'_limsup}(ii) implies that $\limsup_{t\da 0}|C'_t|f(t)=\infty$ a.s., completing the proof.
\end{proof}

For Cauchy process (i.e. $\alpha=1$), Corollary~\ref{cor:stable_limsup} contains the dichotomy in~\cite[Cor.~3]{MR1747095} for the upper functions of $C'$ at time $0$. We note here that results analogous to Corollary~\ref{cor:stable_limsup} can be derived for a spectrally positive stable process $X$ (and for Brownian motion), using the exponential (instead of polynomial) decay of the probability $\p(X_1\le x)$ in $x$ as $x\to-\infty$, see~\cite[Thm~4.7.1]{MR1745764}.

\subsection{Regime (IS): lower functions at time {\nf{0}}}
\label{subsec:lower_fun_C'}

As explained before, obtaining fine conditions for the lower fluctuations of $C'$ is more delicate than in the case of upper fluctuations of $C'$ at $0$. The main reason is that the proof of Theorem~\ref{thm:lower_fun_C'} requires strong control on the Laplace exponent $\Phi_u(w)$ of $\tau_u$, defined in~\eqref{eq:cf_tau0}, as $w\to\infty$ and $u\to-\infty$. This in turn requires sharp two-sided estimates on the negative tail probability $\p(X_t\le ut)$ as a function of $(u,t)$ as $u\to-\infty$ and $t\da 0$ jointly. 

Due to the necessity of such strong control, in the following result we assume $X\in\mZ_{\alpha,\rho}$ for some $\alpha>1$. In other words, we assume there exist some normalising function $g$ that is regularly varying at $0$ with index $1/\alpha$ and an $\alpha$-stable process $(Z_s)_{s \in [0,T]}$ with $\rho=\p(Z_1>0)\in(0,1)$ such that $(X_{ut}/g(t))_{u \in [0,T]} \cid (Z_u)_{u \in [0,T]}$ as $t\da 0$. Recall that $G(t)=t/g(t)$ for $t>0$.

\begin{theorem}\label{thm:lower_fun_C'}
Let $X\in\mZ_{\alpha,\rho}$ for some $\alpha\in(1,2]$ (and hence $\rho\in(0,1)$). Let $f: (0,1) \to (0,\infty)$ be given by $f(t)\coloneqq G(t\log^p(1/t))$, for some $p\in\R$ and all $t\in(0,1)$. Then the following statements hold:
\begin{itemize}[leftmargin=2.5em, nosep]
\item[\nf(i)] if $p>1/(1-\rho)$, then $\liminf_{t\da 0}|C'_t|f(t)=\infty$ a.s.,
\item[\nf(ii)] if $p<1/(1-\rho)$, then $\liminf_{t\da 0}|C'_t|f(t)=0$ a.s.
\end{itemize}
\end{theorem}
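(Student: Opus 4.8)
\textbf{Proof plan for Theorem~\ref{thm:lower_fun_C'}.}
The plan is to follow the three-step reduction outlined in the introduction and then apply the general additive-process machinery of Section~\ref{sec:additive}. First, invoking the horizon-independence of the local behaviour of $C'$, I would replace the fixed horizon $T$ by an independent exponential time, so that the vertex time process $\wh\tau$ becomes the non-decreasing additive process with Laplace exponent $\Phi$ given in~\eqref{eq:cf_tau0}. Since $X$ has infinite variation (being attracted to an $\alpha$-stable process with $\alpha>1$), we have $\wh\tau_u>0$ for every $u\in\R$ and $\wh\tau_u\da 0$ as $u\to-\infty$, so $\wh C'_t=-1/L_t$ where $L$ is the right-inverse of the non-decreasing additive process $Y_u\coloneqq\wh\tau_{-1/u}$, $u>0$ (with $\wh\tau_{-\infty}=0$). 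Thus $\liminf_{t\da 0}|C'_t|f(t)=\infty$ (resp.\ $=0$) is equivalent to an upper- (resp.\ lower-) fluctuation statement for $L$ along the boundary $t\mapsto f(t)$, which by the diagonal correspondence explained in the introduction translates into a \emph{lower} (resp.\ \emph{upper}) fluctuation statement for $Y$. Concretely, part (i) should reduce to showing $\liminf_{u\da 0}Y_u/\varphi(u)$ is small/zero for a suitable $\varphi$ built from $f$, handled by Theorem~\ref{thm:limsup_L} (the $\liminf$-of-$Y$, i.e.\ $\limsup$-of-$L$ direction), and part (ii) to an upper bound on $Y$ via the Fristedt-type Theorem~\ref{thm:Y_limsup}.

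The analytic heart of the argument is obtaining sharp two-sided control of the Laplace exponent $\Phi_u(w)=\int_0^\infty(1-e^{-wt})e^{-\lambda t}\p(X_t\le ut)\,t^{-1}\D t$ as $u\to-\infty$ and $w\to\infty$ jointly; by the scaling of the small-time domain of attraction, the relevant regime pairs $u\sim -1/g(t_0)\cdot(\text{slowly varying})$ with $w\sim 1/t_0$ for $t_0\da 0$. I would first establish sharp estimates on the negative tail $\p(X_t\le ut)$ for small $t$ and large $|u|$: the convergence $(X_{rt}/g(t))_r\cid (Z_r)$ in the $\mZ_{\alpha,\rho}$ class gives, via convergence in Kolmogorov distance together with moment bounds on the truncated small-jump part of $X$, that $\p(X_t\le ut)\approx\p(Z_1\le u t/g(t))\approx c\,(|u|G(t))^{-\alpha}$ up to the relevant ranges, using the polynomial left tail $\p(Z_1\le -x)\sim c_-x^{-\alpha}$ of the stable limit (here $\rho<1$ forces a genuine negative tail). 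Feeding this into~\eqref{eq:cf_tau0} and carrying out the $t$-integral with a Tauberian/Laplace-type estimate should yield $\Phi_u(w)\asymp (\text{explicit expression in }u,w)$; I expect this to be the content of a lemma along the lines of ``Lemma~\ref{lem:asymp_equiv_Psi_domstable_post_min}'' referenced in Remark~\ref{rem:exclusions-tau}(a) but for the present regime, and to be the step where regular variation of $g$ (hence of $G$) and Karamata's theorem are used repeatedly.

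Once $\Phi$ is pinned down, the remaining work is to verify the hypotheses of Theorems~\ref{thm:limsup_L} and~\ref{thm:Y_limsup} for the candidate boundary functions coming from $f(t)=G(t\log^p(1/t))$, and to see the threshold $p=1/(1-\rho)$ emerge. The exponent $1-\rho$ enters because it governs the left tail of the stable limit $Z$ (equivalently $\p(Z_1\le -x)\asymp x^{-\alpha}$ with the constant encoding $1-\rho$), and the logarithmic power $p$ is precisely what tips a Borel--Cantelli-type series — obtained after discretising $u$ (or $t$) along a geometric sequence and summing the estimated $\Phi_{u_n}(w_n)$ — between convergence and divergence. I would check: for $p>1/(1-\rho)$ the relevant series converges, so $Y_{u_n}$ exceeds $\varphi(u_n)$ only finitely often, giving $\liminf Y_u/\varphi(u)=\infty$ is \emph{not} what we want — rather the correct bookkeeping yields that $L$ stays below the curve, i.e.\ $|C'_t|f(t)\to\infty$; for $p<1/(1-\rho)$ divergence plus an independence/second-moment argument forces infinitely many crossings, giving the ``$0$'' conclusion. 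The main obstacle I anticipate is not the Borel--Cantelli bookkeeping but the uniformity in the two-sided estimate of $\Phi_u(w)$: one needs the error terms in $\p(X_t\le ut)\approx c(|u|G(t))^{-\alpha}$ to be controlled \emph{uniformly} over the coupled ranges of $(u,t)$, which is exactly where the Kolmogorov-distance rate from attraction to the stable law, combined with Lemma~\ref{lem:generalized_Picard}-type lower bounds on the small-jump activity, must be deployed carefully; handling the boundary-case sharpness (deliberately excluded as $p=1/(1-\rho)$) is what prevents a cleaner statement, consistent with Remark~\ref{rem:exclusions-tau}(b).
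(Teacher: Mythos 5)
Your overall reduction is on the right track: pass to an exponential horizon (Corollary~\ref{cor:trivial}), set $Y_u=\wh\tau_{-1/u}$ so that $\wh C'_t=-1/L_t$, and translate the claim about $\liminf_{t\da 0}|C'_t|f(t)$ into a statement about $\limsup_{t\da 0}L_t/f(t)$; the correct two-sided asymptotic for $\Phi_u(w)$ and the Borel--Cantelli threshold at $p=1/(1-\rho)$ are also the right objects. However, there are two genuine gaps.

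First, a structural error in which technical theorem drives part (ii). You route part (ii) through the Fristedt-type Theorem~\ref{thm:Y_limsup} and describe it as ``an upper bound on $Y$''. But part (ii) asserts $\liminf_{t\da0}|C'_t|f(t)=0$, i.e.\ $\limsup_{t\da0}L_t/f(t)=\infty$. Since $L_t>u$ is equivalent to $Y_u\le t$, this requires $Y$ to be \emph{small} infinitely often — a \emph{lower}-fluctuation statement for $Y$, exactly like part (i) only in the opposite direction. Theorem~\ref{thm:Y_limsup} controls $\limsup Y_t/h(t)$, which via Lemma~\ref{lem:Y_vs_L} governs $\liminf L_t/h^{-1}(t)$ (that is the machinery behind Theorem~\ref{thm:C'_limsup}, the $\limsup|C'_t|f(t)$ result). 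It gives no handle on $\limsup L$. Both parts of the present theorem come from Theorem~\ref{thm:limsup_L}: part (i) from (a) (Chernoff bound on $\p(Y_{f(t_n)}\le t_n)$), part (ii) from (b) (a second Borel--Cantelli argument using independence of the increments of $Y$ to produce $Y_{f(t_n)}\le t_n$ i.o.). Correspondingly, your line ``part (i) should reduce to showing $\liminf_{u\da0}Y_u/\varphi(u)$ is small/zero'' is backwards: for part (i) one must show $Y$ is \emph{rarely} below the curve, i.e.\ the relevant $\liminf$ of $Y$ is bounded away from $0$.

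Second, the mechanism by which $1-\rho$ enters the estimate for $\Phi$ is different from what you describe. You attribute it to the polynomial left tail $\p(Z\le -x)\asymp x^{-\alpha}$ of the limiting stable law and propose to control the error via Lemma~\ref{lem:generalized_Picard}-type density lower bounds. In fact the dominant contribution to $\Phi_{s_n}(u_n)$ (as $s_n\to-\infty$, $u_n\to\infty$, $u_nG^{-1}(|s_n|^{-1})\to\infty$) comes from the region $t\in(0,\kappa_n)$ with $\kappa_n=G^{-1}(\delta/|s_n|)$, where $s_nG(t)\in(-\delta,0)$ and hence $\p(Q_t\le s_nG(t))\approx\p(Z\le 0)=1-\rho$; the integral there is $\sim(1-\rho)\log(u_nG^{-1}(|s_n|^{-1}))$. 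The polynomial tail of $Z$ controls only the subdominant contribution from $t\ge\kappa_n$, and there one uses a uniform moment bound $\sup_{t>0}\E[|Q_t|^q]<\infty$ (not density bounds) plus Kolmogorov-distance convergence to the stable law. This matters beyond bookkeeping: the logarithmic growth of $\Phi$, with prefactor $1-\rho$, is precisely what makes the threshold in $p$ sit at $1/(1-\rho)$, so getting the mechanism right is essential if you are to produce the exponent.
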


\begin{remark}
\label{rem:exclusions-0}\phantom{empty}
\begin{itemize}[leftmargin=2em, nosep]
\item[(a)] The assumption $X\in\mZ_{\alpha,\rho}$ for some $\alpha>1$ implies that $X$ is of infinite variation. Note that the function $f$ in Theorem~\ref{thm:lower_fun_C'} is regularly varying at $0$ with index $1-1/\alpha$. The `negativity' parameter $1-\rho=\lim_{t\da 0}\p(X_t<0)\in(0,1)$ is a nontrivial function of the L\'evy measure of $X$. The fact that $1-\rho$ features as a boundary point in the power of the logarithmic term in Theorem~\ref{thm:lower_fun_C'} indicates that the lower fluctuations of $C'$ at time $0$ depends in a subtle way on the characteristics of $X$. Such dependence is, for instance, not present for the upper fluctuations of $C'$ at time $0$ when $X$ is $\alpha$-stable, see Corollary~\ref{cor:stable_limsup} above. Indeed, for an $\alpha$-stable process $X$, Theorem~\ref{thm:lower_fun_C'} and Corollary~\ref{cor:stable_limsup}(b) show that $\liminf_{t\da 0}|C'_{t}|f(t)=0$ and $\limsup_{t\da 0}|C'_{t}|f(t)=\infty$ a.s. for $f(t)=t^{1-1/\alpha}\log^{q}(1/t)$ and any $q\in[-1/\alpha,(1-1/\alpha)/(1-\rho))$, demonstrating the gap between the lower and upper fluctuations of $C'$ at time $0$.
\item[(b)] The case where $X$ is attracted to Cauchy process with $\alpha=1$ is expected to hold for the functions $f$ in Theorem~\ref{thm:lower_fun_C'}. As explained in Remark~\ref{rem:exclusions-tau}(a) above, many cases arise, with even some abrupt processes being attracted to Cauchy process (see~\cite[Ex.~2.2]{SmoothCM}). We again stress that, in this case, our methodology can be used to obtain a description of the upper fluctuations of $C'$ at time~$0$ via Theorem~\ref{thm:Y_limsup} and two-sided estimates, analogous to Lemma~\ref{lem:asymp_equiv_Phi_domstable}, of the Laplace exponent $\Phi$ in~\eqref{eq:cf_tau0} of the vertex time process. In the interest of brevity, we omit the details of such extensions.
\item[(c)] As with Theorem~\ref{thm:upper_fun_C'_post_min} above (see Remark~\ref{rem:exclusions-tau}(b)), the boundary case $p=1/(1-\rho)$ in Theorem~\ref{thm:lower_fun_C'} can be analysed along similar lines. In fact, our methods can be used to get increasingly sharper results for the lower fluctuations of $C'$ at time $0$ when stronger control over the negative tail probabilities for the marginals $X$ is available. Such improvements are possible, for instance, when $X$ is $\alpha$-stable. We decided to leave such results for future work in the interest of brevity. For completeness, however, we mention that the following law of iterated logarithm proved in~\cite[Cor.~3]{MR1747095} can also be proved using our methods (see Example~\ref{ex:Cauchy} below): $\liminf_{t\da 0}|C'_t|f(t)=p_X(1,0)$ a.s., where $x\mapsto p_X(t,x)$ is the density of $X_t$.\qedhere
\end{itemize}
\end{remark}

\subsection{Upper and lower function of the L\'evy path at vertex times}
\label{subsec:applications} 

In this section we establish consequences for the lower (resp. upper) fluctuations of the L\'evy path at vertex time $\tau_s$ (resp. time $0$) in terms of those of $C'$. Recall $X_{t-}\coloneqq \lim_{u \ua t} X_u$ for $t > 0$ (and $X_{0-}\coloneqq X_0$) and define $m_s\coloneqq \min\{X_{\tau_s},X_{\tau_s-}\}$ for $s\in\mL^+(\mS)$. 

\begin{lemma}\label{lem:upper_fun_Lev_path_post_slope}
Suppose $s \in \mL^+(\mS)$. Let the function $f:[0,\infty) \to [0,\infty)$ be continuous and increasing and define the function $\tilde f(t)\coloneqq\int_0^t f(u)\D u$, $t\ge 0$. Then the following statements hold for any $M>0$.
\begin{itemize}[leftmargin=2.5em, nosep]
\item[\nf(i)] If $\liminf_{t\da 0} (C'_{t+\tau_s}-s)/f(t)>M$ a
.s. then $\liminf_{t \da 0} (X_{t+\tau_s}-m_s-st)/\tilde f(t)\ge M$ a.s.
\item[\nf(ii)] If $\limsup_{t\da 0} (C'_{t+\tau_s}-s)/f(t)<M$ a.s. then $\liminf_{t \da 0} (X_{t+\tau_s}-m_s-st)/\tilde f(t)\le M$ a.s.
\end{itemize}
\end{lemma}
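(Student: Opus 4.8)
The plan is to exploit the geometric fact that, between consecutive vertices of $C$, the convex minorant coincides with a chord of the path, and that near a vertex time $\tau_s$ the path $X$ lies above the graph of $C$. More precisely, at the vertex time $\tau_s$ the path touches its minorant, so $\min\{X_{\tau_s},X_{\tau_s-}\}=C_{\tau_s}=m_s$; and for $t>0$ we have $X_{t+\tau_s}\ge C_{t+\tau_s}$. Hence $X_{t+\tau_s}-m_s-st\ge C_{t+\tau_s}-C_{\tau_s}-st=\int_0^t (C'_{u+\tau_s}-s)\,\D u$. This reduces part (i) immediately: if $\liminf_{t\da 0}(C'_{t+\tau_s}-s)/f(t)>M$ a.s., then for all small $t$ and all small $u\le t$ we have $C'_{u+\tau_s}-s\ge M f(u)$ (using monotonicity of $C'$ only to the extent needed, or simply applying the $\liminf$ bound uniformly on a small interval), so $\int_0^t(C'_{u+\tau_s}-s)\D u\ge M\int_0^t f(u)\D u = M\tilde f(t)$, giving $\liminf_{t\da 0}(X_{t+\tau_s}-m_s-st)/\tilde f(t)\ge M$ a.s.

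For part (ii) the lower bound via the chord must be complemented by an upper bound, which is where the real work lies. Here I would use the vertex structure: pick a sequence of vertex times $t_n\da 0$ (vertices of $C$ accumulate at $\tau_s$ from the right since $s\in\mL^+(\mS)$, so $C'$ is not eventually constant), at which $X_{\tau_s+t_n}=C_{\tau_s+t_n}$ (the path meets the minorant at vertices, up to the left/right limit subtlety which is handled by passing to $X_{(\tau_s+t_n)-}$ if necessary). Along such times, $X_{\tau_s+t_n}-m_s-st_n = C_{\tau_s+t_n}-C_{\tau_s}-st_n=\int_0^{t_n}(C'_{u+\tau_s}-s)\D u$, and now I want to bound this integral from above using $\limsup_{t\da 0}(C'_{t+\tau_s}-s)/f(t)<M$. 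Since $C'$ is non-decreasing, for $u\le t_n$ we have $C'_{u+\tau_s}-s\le C'_{t_n+\tau_s}-s$; but that crude bound is not enough. Instead, the $\limsup$ hypothesis gives $C'_{u+\tau_s}-s\le M f(u)$ for all sufficiently small $u$, hence $\int_0^{t_n}(C'_{u+\tau_s}-s)\D u\le M\int_0^{t_n}f(u)\D u=M\tilde f(t_n)$ for large $n$. Combined with the exact identity along vertex times, this yields $(X_{\tau_s+t_n}-m_s-st_n)/\tilde f(t_n)\le M$, so the $\liminf$ over all $t$ is $\le M$ a.s.

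The main obstacle I anticipate is the careful handling of left/right limits at the vertex times and the precise justification that, along the chosen sequence of vertex times $t_n$, the quantity $X_{\tau_s+t_n}-m_s-st_n$ (or its left-limit version) equals exactly $\int_0^{t_n}(C'_{u+\tau_s}-s)\D u$ rather than merely dominating it. One must argue that $m_s=C_{\tau_s}$ (the minimum of $X_{\tau_s}$ and $X_{\tau_s-}$ equals the minorant value there — this is a standard property of vertex times of convex minorants of càdlàg paths, cf. the cited~\cite{fluctuation_levy}), and that at a vertex time $v>\tau_s$ one has $\min\{X_v,X_{v-}\}=C_v$. Passing between $X$ and $X_-$ at these countably many times does not affect a $\liminf$ statement for a fixed function $\tilde f$ as long as one is consistent. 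A secondary technical point is confirming that the hypotheses "$\liminf>M$" and "$\limsup<M$" can be used with the \emph{same} $M$ on a deterministic small interval — this is fine because, by Corollary~\ref{cor:trivial}, these $\liminf$/$\limsup$ are a.s. constant, so "$\liminf>M$ a.s." means there is a (random, a.s.\ positive) $\delta$ with $C'_{u+\tau_s}-s\ge M f(u)$ for $u\in(0,\delta)$, which is exactly what the integral estimate needs.
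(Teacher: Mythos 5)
Your proof is correct and takes essentially the same route as the paper's: identify $m_s=C_{\tau_s}$, integrate the pointwise bound on $C'_{\cdot+\tau_s}-s$ to bound $C_{\cdot+\tau_s}-m_s-s\cdot$, then use $X\ge C$ for part (i) and the accumulation of contact points (vertex times) at $\tau_s$ for part (ii), with the same left/right-limit caveat that the paper resolves via the continuity of $\tilde f$. (The parenthetical appeal to monotonicity of $C'$ in part (i) is unnecessary — the $\liminf$ bound applied pointwise and integrated already suffices — but this is cosmetic, not a gap.)
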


The proof of Lemma~\ref{lem:upper_fun_Lev_path_post_slope} is pathwise. The lemma yields the following implications
\begin{itemize}[leftmargin=2.5em, nosep]
\item[\nf(i)] $\liminf_{t\da 0} (C'_{t+\tau_s}-s)/f(t)=\infty\implies\liminf_{t \da 0} (X_{t+\tau_s}-m_s-st)/\tilde f(t)=\infty$,
\item[\nf(ii)] $\limsup_{t\da 0} (C'_{t+\tau_s}-s)/f(t)=0\implies\liminf_{t \da 0} (X_{t+\tau_s}-m_s-st)/\tilde f(t)=0$.
\end{itemize}
The upper fluctuations of $X$ at vertex time $\tau_s$ cannot be controlled via the fluctuations of $C'$ since the process may have large excursions away from its convex minorant between contact points. Moreover, the limits $\liminf_{t\da 0} (C'_{t+\tau_s}-s)/f(t)=0$ or $\limsup_{t\da 0} (C'_{t+\tau_s}-s)/f(t)=\infty$, do not provide sufficient information to ascertain the value of the lower limit $\liminf_{t\da 0}(X_{t+\tau_s}-m_s-st)/\tilde f(t)$, since this limit may not be attained along the contact points between the path and its convex minorant. 

Theorems~\ref{thm:post-min-lower} a
nd~\ref{thm:upper_fun_C'_post_min} give sufficient conditions, in terms of the law of $X$, for the assumptions in Lemma~\ref{lem:upper_fun_Lev_path_post_slope} to hold. This leads to the following corollaries.

\begin{corollary}
\label{cor:post_tau_s_Levy_path}
Let $s \in \mL^+(\mS)$ and let $f$ be a continuous and increasing function with $f(0)=0=\lim_{c\da0}\limsup_{t\da0}f(ct)/f(t)$, $f(1)=1$ and $f(t)\le 1$ for $t\in(0,1]$. If conditions~\eqref{eq:post-min-Pi-large}--\eqref{eq:post-min-Pi-mean} hold for $c=1$, then $\liminf_{t \da 0} (X_{t+\tau_s}-m_s-st)/\tilde f(t)=\infty$ a.s. where we denote $\wt f(t)\coloneqq \int_0^t f(u) \D u$.
\end{corollary}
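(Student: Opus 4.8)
The plan is to obtain the corollary as a formal consequence of Theorem~\ref{thm:post-min-lower}(i) and Lemma~\ref{lem:upper_fun_Lev_path_post_slope}(i). First I would note that the function $f$ in the statement satisfies precisely the standing hypotheses imposed on $f$ in Theorem~\ref{thm:post-min-lower} (continuous and increasing, $f(0)=0=\lim_{c\da0}\limsup_{t\da0}f(ct)/f(t)$, and $f(t)\le 1=f(1)$ for $t\in(0,1]$). Hence, since conditions~\eqref{eq:post-min-Pi-large}--\eqref{eq:post-min-Pi-mean} are assumed to hold for $c=1$, Theorem~\ref{thm:post-min-lower}(i) yields $\liminf_{t\da0}(C'_{t+\tau_s}-s)/f(t)=\infty$ a.s. If $f$ is only prescribed on $(0,1]$, I would first extend it to a continuous increasing function on $[0,\infty)$; this changes neither the hypotheses of Theorem~\ref{thm:post-min-lower} nor the value of $\wt f(t)=\int_0^t f(u)\,\D u$ for small $t>0$, which is all that enters the conclusion.

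Next I would feed this into Lemma~\ref{lem:upper_fun_Lev_path_post_slope}(i). For each fixed $M>0$, the a.s. bound $\liminf_{t\da0}(C'_{t+\tau_s}-s)/f(t)=\infty>M$ implies, by part (i) of the lemma, that $\liminf_{t\da0}(X_{t+\tau_s}-m_s-st)/\wt f(t)\ge M$ a.s. Taking $M$ over the positive integers and intersecting the corresponding full-probability events gives $\liminf_{t\da0}(X_{t+\tau_s}-m_s-st)/\wt f(t)=\infty$ a.s., which is the claim.

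Since the statement is a direct combination of two previously established results, there is no genuine obstacle here; the only point that needs a moment's attention is the compatibility of the hypotheses on $f$ across Theorem~\ref{thm:post-min-lower} and Lemma~\ref{lem:upper_fun_Lev_path_post_slope} (and, if one insists on $f$ being defined on all of $[0,\infty)$, the harmless extension past $t=1$ noted above). Everything else is bookkeeping with almost-sure events, using that a countable intersection of probability-one events has probability one.
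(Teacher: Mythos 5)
Your proof is correct and follows the paper's intended route exactly: it combines Theorem~\ref{thm:post-min-lower}(i) with Lemma~\ref{lem:upper_fun_Lev_path_post_slope}(i) and lets $M\to\infty$ over a countable sequence (the paper leaves this corollary as an immediate consequence of the sentence preceding it and gives no separate proof). Your remark on extending $f$ past $t=1$ to reconcile the domains in the two statements is a minor but valid piece of bookkeeping.
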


Denote by $\varpi(t)\coloneqq t^{-1/\alpha}g(t)$ the slowly varying (at $0$) component of the normalising function $g$ of a process in the class $\mZ_{\alpha,\rho}$. Recall that $G(t)=t/g(t)$ for $t>0$.

\begin{corollary} 
\label{cor:post-tau_s-Levy-path-attraction}
Let $X\in\mZ_{\alpha,\rho}$ for some $\alpha\in(0,1)$ and $\rho\in(0,1]$. Given $p\in\R$, denote $\tilde f(t)\coloneqq\int_0^t G(u\log^p(u^{-1}))^{-1}\D u$ for $t> 0$. Then the following statements hold for $s=\gamma_0$.
\begin{itemize}[leftmargin=2.5em, nosep]
    \item[\nf(i)] If $p>1/\rho$, then $\liminf_{t \da 0} (X_{t+\tau_{s}}-m_{s}-st)/\tilde f(t)=0$ a.s.
    \item[\nf(ii)] If $\alpha\in (1/2,1)$, $p<-\alpha/(1-\alpha)$ and $(\varpi(c/t)/\varpi(1/t)-1)\log\log(1/t) \to 0$ as $t \da 0$ for some $c\in(0,1)$, then $\liminf_{t \da 0} (X_{t+\tau_{s}}-m_{s}-st)/\tilde f(t)=\infty$ a.s.
    \item[\nf{(iii)}] If $\alpha \in (0,1/2]$, then $\liminf_{t \da 0} (X_{t+\tau_{s}}-m_{s}-st)/ t^q=\infty$ a.s. for any $q>1/\alpha\ge 2$.
\end{itemize}
\end{corollary}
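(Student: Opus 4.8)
The plan is to derive all three parts of Corollary~\ref{cor:post-tau_s-Levy-path-attraction} from Lemma~\ref{lem:upper_fun_Lev_path_post_slope} combined with the already-established results on the fluctuations of $C'$ at the vertex time $\tau_s$ with $s=\gamma_0$, namely Theorems~\ref{thm:post-min-lower} and~\ref{thm:upper_fun_C'_post_min} and their corollaries. As usual I would reduce to $s=\gamma_0=0$ by passing to the process $(X_t-st)_{t\ge0}$; since $X\in\mZ_{\alpha,\rho}$ with $\alpha<1$ and $\rho\in(0,1]$ we have $\mL^+(\mS)=\{\gamma_0\}$, so this is the only relevant slope. Throughout, $\tilde f(t)=\int_0^t f(u)\,\D u$ where $f(t)=G(t\log^p(1/t))^{-1}$, which is regularly varying at $0$ with index $1/\alpha-1>0$, so $f$ is increasing near $0$ with $f(0)=0$ and Karamata's theorem gives $\tilde f(t)\sim \alpha t f(t)/(1+\alpha(1/\alpha-1))=$ const $\cdot\,tf(t)$; after a harmless normalisation we may assume $f(1)=1$ and $f\le 1$ on $(0,1]$, matching the hypotheses of the cited results.

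For part (i), $p>1/\rho$: by Theorem~\ref{thm:upper_fun_C'_post_min}(i) we have $\limsup_{t\da0}(C'_{t+\tau_s}-s)/f(t)=0$ a.s., and then Lemma~\ref{lem:upper_fun_Lev_path_post_slope}(ii) (in its $M\da0$ form, i.e. the second displayed implication after the lemma) yields $\liminf_{t\da0}(X_{t+\tau_s}-m_s-st)/\tilde f(t)=0$ a.s. For part (ii), $\alpha\in(1/2,1)$ and $p<-\alpha/(1-\alpha)=-1/(1/\alpha-1)$: here I want to invoke Theorem~\ref{thm:post-min-lower}(i), i.e. verify conditions~\eqref{eq:post-min-Pi-large}--\eqref{eq:post-min-Pi-mean} for $c=1$, which via Lemma~\ref{lem:upper_fun_Lev_path_post_slope}(i) (its $M\to\infty$ form) gives $\liminf_{t\da0}(X_{t+\tau_s}-m_s-st)/\tilde f(t)=\infty$ a.s. The cleanest route is Corollary~\ref{cor:power_func_liminf_post_min}(a) with $\beta=\alpha$ (available because $\liminf_{t\da0}t^{-1/\alpha}g(t)>0$ fails in general, but $X\in\mZ_{\alpha,\rho}$ with the stated regularity on $\varpi$ lets one take $\beta$ arbitrarily close to $\alpha$; the condition $(\varpi(c/t)/\varpi(1/t)-1)\log\log(1/t)\to0$ is exactly what is needed to push the logarithmic power through at the critical index). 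Concretely I would check that $f'$ is regularly varying at $0$ of index $1/\alpha-2$ (nonzero), so by Remark~\ref{rem:power_func_liminf_post_min}(b) the two integrals in Corollary~\ref{cor:power_func_liminf_post_min}(a) are finite iff $\int_0^1 t^{-1/\alpha}f(t)\,\D t<\infty$; and $t^{-1/\alpha}f(t)=t^{-1/\alpha}G(t\log^p(1/t))^{-1}$ is, up to the slowly varying factor $\varpi$, comparable to $t^{-1}\log^{-p(1/\alpha-1)}(1/t)$, whose integral over $(0,1)$ converges precisely when $p(1/\alpha-1)>1$, i.e. $p>1/(1/\alpha-1)$ — but we need the \emph{opposite} sign, so in fact for $p<-\alpha/(1-\alpha)<0$ one should read off that the boundary $u\mapsto f^{-1}(u)$ is traversed finitely often, and the relevant computation is the convergence of $\int_0^1\int_{t/2}^1(f'(y)/y)t^{1-1/\beta}\,\D y\,\D t$; the condition on $\varpi$ ensures the slowly varying correction does not tip this integral, and Theorem~\ref{thm:post-min-lower}(i) then applies. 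For part (iii), $\alpha\in(0,1/2]$: here $1/\alpha\ge2$ and for any $q>1/\alpha$ the function $f(t)=t^{q-1}$ is regularly varying of index $q-1>1/\alpha-1$, so $\tilde f(t)$ is comparable to $t^q$; one checks that $t^{-1/\alpha}f(t)=t^{q-1-1/\alpha}$ is integrable on $(0,1)$ since $q-1-1/\alpha>-1$, hence by Corollary~\ref{cor:power_func_liminf_post_min}(a) (with any $\beta<\alpha$, or $\beta=\alpha$ when the normalising function is sufficiently regular) $\liminf_{t\da0}(C'_{t+\tau_s}-s)/t^{q-1}=\infty$ a.s., and Lemma~\ref{lem:upper_fun_Lev_path_post_slope}(i) upgrades this to $\liminf_{t\da0}(X_{t+\tau_s}-m_s-st)/t^q=\infty$ a.s.

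The main obstacle I anticipate is part (ii): reconciling the hypothesis $p<-\alpha/(1-\alpha)$ with the convergence criteria of Corollary~\ref{cor:power_func_liminf_post_min}(a), and in particular handling the slowly varying factor $\varpi$ carefully. Because the relevant integrals $\int_0^1 t^{-1/\beta}f(t)\,\D t$ and $\int_0^1\int_{t/2}^1(f'(y)/y)t^{1-1/\beta}\,\D y\,\D t$ sit at the critical exponent $\beta=\alpha$, one cannot simply absorb $\varpi$ into a slightly worse power of $t$; the precise quantitative input is the assumption $(\varpi(c/t)/\varpi(1/t)-1)\log\log(1/t)\to0$, which controls the oscillation of $\varpi$ at the scale of iterated logarithms and is exactly strong enough (via a Karamata-type estimate, cf.~\cite[Thm~1.5.11]{MR1015093}) to guarantee that $f$ is, up to the logarithmic power $\log^{p}(1/t)$, sandwiched between two regularly varying functions of index $1/\alpha-1$ with logarithmic powers straddling the critical value; then the sign of $p$ (being negative, indeed below $-\alpha/(1-\alpha)$) makes the integrals converge. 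Writing this step out requires a short but delicate lemma comparing $G(t\log^p(1/t))$ with $G(t)\log^{p(1/\alpha-1)}(1/t)$, which I would state and prove first, before assembling the three parts as above. Everything else — the reduction to $s=0$, the identification of $\tilde f\asymp tf$, and the invocation of Lemma~\ref{lem:upper_fun_Lev_path_post_slope} — is routine.
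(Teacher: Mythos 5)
Parts (i) and (iii) of your proposal match the paper's proof. Part (ii) has a genuine gap.

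For part (ii), you propose to run the argument through Corollary~\ref{cor:power_func_liminf_post_min}(a), choosing $\beta$ ``arbitrarily close to $\alpha$''. This cannot work. The function $f(t)=1/G(t\log^p(1/t))$ is regularly varying at $0$ with index $1/\alpha-1$, so for any $\beta<\alpha$ the integrand $t^{-1/\beta}f(t)$ is regularly varying with index $1/\alpha-1/\beta-1<-1$ and $\int_0^1 t^{-1/\beta}f(t)\,\D t$ diverges — the hypotheses of Corollary~\ref{cor:power_func_liminf_post_min}(a) simply fail. Taking $\beta=\alpha$ is only licensed (see Remark~\ref{rem:power_func_liminf_post_min}(a)) under the additional requirement $\liminf_{t\da0}t^{-1/\alpha}g(t)=\liminf_{t\da0}\varpi(t)>0$, which is \emph{not} implied by the oscillation condition $(\varpi(c/t)/\varpi(1/t)-1)\log\log(1/t)\to0$ that the statement actually imposes. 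So the ``short but delicate lemma comparing $G(t\log^p(1/t))$ with $G(t)\log^{p(1/\alpha-1)}(1/t)$'' that you defer is not the missing ingredient; the obstruction sits one level deeper, in the density estimate itself. The paper avoids the Picard-type bound $\sup_x p_X(t,x)\le Ct^{-1/\beta}$ from Corollary~\ref{cor:power_func_liminf_post_min}(a) entirely and instead invokes Lemma~\ref{lem:generalized_Picard}(a), which gives the $\varpi$-aware bound $\sup_x p_X(t,x)=\Oh(1/g(t))$. Feeding this into Theorem~\ref{thm:post-min-lower} via condition~\eqref{eq:simple_suff_cond_post_min_density}, the conditions~\eqref{eq:post-min-Pi-large}--\eqref{eq:post-min-Pi-mean} reduce to $\int_0^1 f(t)/g(t)\,\D t<\infty$ and $\int_0^1\int_{f(t/2)}^1 f^{-1}(x)^{-1}\,\D x\,(t/g(t))\,\D t<\infty$, and it is \emph{these} integrals (with the slowly varying factor $\varpi$ present explicitly) that the hypothesis on $\varpi$ and the threshold $p<-\alpha/(1-\alpha)$ are calibrated to handle, through a regular-variation computation using \cite[Thm~1.5.11, 1.5.12]{MR1015093} and the uniform convergence $\varpi(t\log^p(1/t))/\varpi(t)\to1$. (Also, a small sign slip: you write $t^{-1/\alpha}f(t)\asymp t^{-1}\log^{-p(1/\alpha-1)}(1/t)$ up to $\varpi$, but the correct exponent is $+p(1/\alpha-1)$; you do correct the conclusion, but the intermediate claim ``converges precisely when $p(1/\alpha-1)>1$'' should be replaced by $p(1/\alpha-1)<-1$.)
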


\begin{remark}
\phantom{empty}
\begin{itemize}[leftmargin=2em, nosep]
\item[(a)] The function $\tilde f$ is regularly varying at $0$ with index $1/\alpha$. This makes conditions in Corollary~\ref{cor:post-tau_s-Levy-path-attraction} nearly optimal in the following sense: the polynomial rate in all three cases is either $1/\alpha$ (cases (i) and (ii) in Corollary~\ref{cor:post-tau_s-Levy-path-attraction}) or arbitrarily close to it (case (iii) in Corollary~\ref{cor:post-tau_s-Levy-path-attraction}). If $\alpha>1/2$, then the gap is in the power of the logarithm in the definition of $\tilde f$.
\item[(b)] When the natural drift $\gamma_0=0$, Corollary~\ref{cor:post-tau_s-Levy-path-attraction} describes the lower fluctuations (at time $0$) of the post-minimum process $X^\ra=(X^\ra_t)_{t\in[0,T-\tau_0]}$ given by $
X^\ra_t\coloneqq X_{t+\tau_0}-m_0$ (note that $m_0=\inf_{t\in[0,T]}X_t$). The closest result in this vein is~\cite[Prop.~3.6]{MR1947963} where Vigon shows that, for any infinite variation L\'evy process $X$ and $r>0$, we have $\liminf_{t\da 0}X^\ra_t/t\ge r$ a.s. if and only if $\int_0^1\p(X_t/t\in[0,r])t^{-1}\D t<\infty$. Our result considers non-linear functions and a large class of finite variation processes.
\item[(c)] By~\cite[Thm~2]{MR3160578}, the assumption $X\in\mZ_{\alpha,\rho}$ and $\gamma_0=0$ 
implies that the post-minimum process, conditionally given $\tau_0$, is a L\'evy meander. Hence, Corollary~\ref{cor:post-tau_s-Levy-path-attraction} also describes the lower functions of the meanders of L\'evy processes in $\mZ_{\alpha,\rho}$. A similar remark applies to the results in Corollary~\ref{cor:post_tau_s_Levy_path}.\qedhere
\end{itemize}
\end{remark}

When $X$ has infinite variation, the process $X$ and $C$ touch each other infinitely often on any neighborhood of $0$ (see~\cite{SmoothCM}), leading to the following connection in small time between the paths of $X$ and its convex minorant $C$.

\begin{lemma}\label{lem:upper_fun_Lev_path}
Let the function $f:[0,\infty) \to [0,\infty)$ be continuous and increasing with $f(0)=0$ and finite $\tilde f(t)\coloneqq\int_0^t f(u)^{-1}\D u$, $t\ge 0$. Then the following statements hold for any $M>0$.

\begin{itemize}[leftmargin=2.5em, nosep]
\item[\nf(i)] If $\limsup_{t\da 0} |C_t'|f(t)<M$ a.s., then  $\limsup_{t \da 0} (-X_t)/\tilde f(t)\le M$ a.s.
\item[\nf(ii)] If $\liminf_{t\da 0} |C_t'|f(t)>M$ a.s., then $\limsup_{t \da 0} (-X_t)/\tilde f(t)\ge M$ a.s.
\end{itemize}
\end{lemma}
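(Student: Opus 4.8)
\textbf{Proof strategy for Lemma~\ref{lem:upper_fun_Lev_path}.}
The plan is to exploit the pathwise relationship between the L\'evy path $X$ and its convex minorant $C$ at the contact points in a neighbourhood of $0$, mirroring the proof of Lemma~\ref{lem:upper_fun_Lev_path_post_slope} but now using the explosion $|C'_t|\to\infty$ as $t\da 0$ available in the infinite variation regime. First I would recall the elementary bound $C_t\le X_t$ together with the fact that $C$ is convex, so that for $0<u\le t$ we have $C_t-C_u\ge C'_u(t-u)$, and more usefully, integrating the monotone function $C'$, we get $C_t-C_u=\int_u^t C'_r\,\D r$. Since $C'_r<0$ for $r$ near $0$ (as $X$ has infinite variation, $\lim_{r\da0}C'_r=-\infty$ a.s.~by~\cite[Sec.~1.1.2]{SmoothCM}) and $C_0=X_0=0$ by convention, letting $u\da 0$ gives $C_t=\int_0^t C'_r\,\D r$, hence $-X_t\le -C_t=\int_0^t (-C'_r)\,\D r=\int_0^t|C'_r|\,\D r$.

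For part (i), suppose $\limsup_{t\da0}|C'_t|f(t)<M$ a.s. Then for a.e.\ $\omega$ there is $\ve>0$ with $|C'_r|\le M/f(r)$ for all $r\in(0,\ve)$, so for $t\in(0,\ve)$,
\[
-X_t\le \int_0^t|C'_r|\,\D r\le M\int_0^t f(r)^{-1}\,\D r=M\tilde f(t),
\]
which gives $\limsup_{t\da0}(-X_t)/\tilde f(t)\le M$ a.s. For part (ii), suppose $\liminf_{t\da0}|C'_t|f(t)>M$ a.s. Here the key is that the $\limsup$ of $-X_t$ is realised along the contact points, i.e.\ the vertex times of $C$: if $t$ is a vertex time then $X_t=C_t=\int_0^t C'_r\,\D r$ (this uses $X_{\tau}=C_\tau$ at vertices, together with the above integral representation of $C$, and the fact that vertex times accumulate at $0$ when $X$ has infinite variation, as noted in the text preceding the lemma, citing~\cite{SmoothCM}). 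Along such times we get the reverse bound: for a.e.\ $\omega$ there is $\ve>0$ with $|C'_r|\ge M/f(r)$ on $(0,\ve)$, so for any vertex time $t\in(0,\ve)$,
\[
-X_t=-C_t=\int_0^t|C'_r|\,\D r\ge M\int_0^t f(r)^{-1}\,\D r=M\tilde f(t);
\]
since such vertex times $t$ can be taken arbitrarily small, $\limsup_{t\da0}(-X_t)/\tilde f(t)\ge M$ a.s.

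\textbf{Main obstacle.} The delicate point is part (ii): one must be careful that $\limsup_{t\da0}(-X_t)/\tilde f(t)$ is actually attained (or approached) along the vertex times of $C$, rather than possibly being strictly larger because of an excursion of $X$ above $C$. The inequality we need only goes one way here --- we want a \emph{lower} bound on the $\limsup$ of $-X_t$, and restricting to vertex times $t$ (where $X_t=C_t$) suffices for that, since those times form a set accumulating at $0$ a.s.\ in the infinite variation case. One should also double-check the harmless boundary conventions $X_0=C_0=0$ and that $\tilde f(t)=\int_0^t f(u)^{-1}\,\D u$ is finite and increasing (assumed), so that the comparison of integrals is legitimate and $\tilde f(t)\da 0$ as $t\da0$. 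Finally, the a.s.\ constancy of the limits $\limsup_{t\da0}|C'_t|f(t)$ and $\limsup_{t\da0}(-X_t)/\tilde f(t)$ (needed to pass from ``$>M$ a.s.'' hypotheses to clean conclusions) follows as in Corollary~\ref{cor:trivial}; no extra work is required beyond invoking it.
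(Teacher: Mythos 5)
Your proposal is correct and takes essentially the same approach as the paper, which condenses its argument to "follows along similar lines" with the substitutions $s=-\infty$, $m_{-\infty}=0$, and $\tilde f(t)=\int_0^t f(u)^{-1}\,\D u$; your write-up spells this out explicitly. One small imprecision: at a vertex time $t$ of $C$ you assert $X_t=C_t$, whereas the contact condition is only $X_t\wedge X_{t-}=C_t$, so it may be $X_{t-}$ rather than $X_t$ that equals $C_t$; the paper handles this (in the proof of Lemma~\ref{lem:upper_fun_Lev_path_post_slope}(ii), on which the present proof relies) by noting that the conclusion still follows from continuity of $\tilde f$ and left limits of $X$, and you should add the same remark in part (ii). Also, your "main obstacle" paragraph mentions excursions of $X$ above $C$ possibly making $\limsup(-X_t)/\tilde f(t)$ "strictly larger", but $X\ge C$ means such excursions make $-X_t$ smaller, not larger; the correct (and unproblematic) worry, which you then state, is simply that one only obtains a lower bound on the $\limsup$, which is exactly what is claimed.
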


Theorem~\ref{thm:C'_limsup} and the corollaries thereafter give sufficient explicit conditions for the assumption in Lemma~\ref{lem:upper_fun_Lev_path}(i) to hold. Similarly, Theorem~\ref{thm:lower_fun_C'} gives a fine class of functions $f$ satisfying the assumption in Lemma~\ref{lem:upper_fun_Lev_path}(ii) for a large class of processes. Such conclusions on the fluctuations of the L\'evy path of $X$ would not be new as the fluctuations of $X$ at $0$ are already known, see~\cite{SoobinKimLeeLIL,MR2480786,MR2591911}. In particular, the upper functions of $X$ and $-X$ at time $0$ were completely characterised in~\cite{MR2591911} in terms of the generating triplet of $X$. Let us comment on some two-way implications of our results, the literature and Lemma~\ref{lem:upper_fun_Lev_path}.

\begin{remark}
\label{rem:upper_fun_Lev_path}\phantom{empty}
\begin{itemize}[leftmargin=2em, nosep]
\item[(a)] By~\cite{MR0002054}, the assumption in Theorem~\ref{thm:C'_limsup}(ii) implies that $\limsup_{t\da0}|X_t|/F(t)=\infty$ a.s. where we recall that $F(t)=t/f(t)$. Similarly, by~\cite{MR0002054}, if $\limsup_{t\da0}|X_t|/F(t)=\infty$ a.s. then the assumption in Theorem~\ref{thm:C'_limsup}(ii) must hold for either $X$ or $-X$, which, by time reversal, implies that at least one of the limits $\limsup_{t\da0} |C_t'|f(t)$ or $\limsup_{t\da0} |C_{T-t}'|f(t)$ is infinite a.s. This conclusion is similar to that of Lemma~\ref{lem:upper_fun_Lev_path}, the main difference being the use of either $\tilde f$ or $F$. Note however, that if $f$ is regularly varying with index different from $1$, then~\cite[Thm~1.5.11]{MR1015093} implies $\lim_{t\da 0}\tilde f(t)/F(t)\in(0,\infty)$.
\item[(b)] The contrapositive statements of Lemma~\ref{lem:upper_fun_Lev_path} give information on $C'$ in terms of $-X$. Indeed, if we have $\limsup_{t \da 0}(-X_t)/\tilde f(t)>0$, then $\limsup_{t\da 0} |C_t'|f(t)>0$. Similarly, if $\limsup_{t \da 0} (-X_t)/\tilde f(t)<\infty$, then $\liminf_{t\da 0} |C_t'|f(t)<\infty$.\qedhere
\end{itemize}
\end{remark}

The connections between the fluctuations of $X$ and those of $C'$ at time $0$ are intricate. Although the one-sided fluctuations of $X$ at $0$ were essentially characterised in~\cite[Thm~3.1]{MR2591911}, its combination with Lemma~\ref{lem:upper_fun_Lev_path} is not sufficiently strong to obtain conditions for any of the following statements: $\limsup_{t\da0}|C'_t|f(t)=0$, $\limsup_{t\da0}|C'_t|f(
t)>0$, $\liminf_{t\da0}|C'_t|f(t)<\infty$ or $\liminf_{t\da0}|C'_t|f(t)=\infty$ a.s. 

\section{Small-time fluctuations of non-decreasing additive processes}
\label{sec:additive}

Consider a pure-jump right-continuous non-decreasing additive (i.e. with independent and possibly non-stationary increments) process $Y=(Y_t)_{t\ge 0}$ with $Y_0=0$ a.s. and its mean jump measure $\Pi(\D t,\D x)$ for $(t,x)\in[0,\infty)\times(0,\infty)$, see~\cite[Thm~15.4]{MR1876169}. Then, by Campbell's formula~\cite[Lem.~12.2]{MR1876169}, its Laplace transform satisfies
\begin{equation}
\label{eq:defn_Psi_additive_process}
   \E\big[e^{-uY_t}\big]
=e^{-\Psi_{t}(u)},
\quad\text{where}\quad
\Psi_{t}(u)\coloneqq \int_{(0,\infty)}(1-e^{-ux})\Pi((0,t],\D x),
\quad \text{for any }u\ge 0. 
\end{equation}
Let $L_t\coloneqq \inf\{u>0: Y_u>t\}$ for $t\ge0$ (with convention $\inf\emptyset=\infty$) denote the right-continuous inverse of $Y$. Our main objective in this section is to describe the upper and lower fluctuations of $L$, extending known results for the case where $Y$ has stationary increments (making $Y$ a subordinator) in which case $\Pi(\D t,\D x)=\Pi((0,1],\D x)\D t$ for all $(t,x)\in[0,\infty)\times(0,\infty)$ (see e.g.~\cite[Thm 4.1]{MR1746300}). 

\subsection{Upper functions of {\nf{\emph{L}}}}
\label{subsec:upper_func_gen}

The following theorem is the main result of this subsection.

\begin{theorem}\label{thm:limsup_L}
Let $f:(0,1)\to(0,\infty)$ be increasing with $\lim_{t \downarrow 0}f(t)=0$ and $\phi:(0,\infty) \to (0,\infty)$ be decreasing with $\lim_{u \to \infty} \phi(u)=0$. Let the positive sequence $(\theta_n)_{n\in\N}$ satisfy $\lim_{n\to \infty}\theta_n=\infty$ and define the associated sequence $(t_n)_{n\in\N}$ given by $t_n\coloneqq \phi(\theta_n)$ for any $n \in \N$.\\
{\nf{(a)}} If $\sum_{n=1}^\infty \exp(\theta_n t_n-\Psi_{f(t_n)}(\theta_n))<\infty$ then $\limsup_{t \da 0}L_t/f(t)\leq \limsup_{n \to \infty}f(t_n)/f(t_{n+1})$ a.s.\\
{\nf{(b)}} If $\lim_{u\to\infty}\phi(u)u=\infty$, $\sum_{n=1}^\infty [\exp(-\Psi_{f(t_n)}(\theta_n))-\exp(-\theta_n t_n)]=\infty$ and $\sum_{n=1}^\infty \Psi_{f(t_{n+1})}( \theta_n)<\infty$, then $\limsup_{t \downarrow 0} L_t/f(t)\geq 1$ a.s.
\end{theorem}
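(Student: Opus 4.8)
The plan is to treat this as a Borel--Cantelli problem along the geometric-type subsequence $(t_n)_{n\in\N}$, using the independence of increments of $Y$ together with the Laplace-transform identity~\eqref{eq:defn_Psi_additive_process}. The key observation linking $L$ to $Y$ is the standard inverse relation $\{L_t\le u\}=\{Y_u\ge t\}$ (up to the usual right-continuity caveats), so that controlling $L_t/f(t)$ along $t_n$ amounts to controlling the events $A_n\coloneqq\{Y_{f(t_n)}\ge t_n\}$ for part (a) and events of the form $\{Y_{f(t_{n+1})}<t_n\}$ for the lower bound in part (b). The role of $\phi$ and $\theta_n$ is purely to provide good exponential Markov/Chernoff bounds: with $t_n=\phi(\theta_n)$, the quantity $\theta_n t_n-\Psi_{f(t_n)}(\theta_n)$ is exactly the exponent appearing in the Chernoff bound $\p(Y_{f(t_n)}\ge t_n)=\p(e^{-\theta_n Y_{f(t_n)}}\ge e^{-\theta_n t_n})\le e^{\theta_n t_n}\E[e^{-\theta_n Y_{f(t_n)}}]=\exp(\theta_n t_n-\Psi_{f(t_n)}(\theta_n))$.

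For part (a): the summability hypothesis $\sum_n\exp(\theta_n t_n-\Psi_{f(t_n)}(\theta_n))<\infty$ combined with the Chernoff bound above gives $\sum_n\p(Y_{f(t_n)}\ge t_n)<\infty$, so by Borel--Cantelli only finitely many $A_n$ occur, i.e.\ eventually $Y_{f(t_n)}<t_n$, equivalently $L_{t_n}\ge f(t_n)$ fails to exceed... — more precisely $Y_{f(t_n)}<t_n$ means $L_{t_n}>f(t_n)$ is false, giving $L_{t_n}\le f(t_n)$ eventually. To pass from the subsequence $t_n$ to all $t\da0$, I would use monotonicity: $L$ is non-decreasing and $f$ is increasing, so for $t\in[t_{n+1},t_n]$ we have $L_t\le L_{t_n}\le f(t_n)$ while $f(t)\ge f(t_{n+1})$, hence $L_t/f(t)\le f(t_n)/f(t_{n+1})$ for all large $n$; taking $\limsup$ yields the claimed bound $\limsup_{t\da0}L_t/f(t)\le\limsup_n f(t_n)/f(t_{n+1})$.

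For part (b): here I need the second Borel--Cantelli lemma, which requires independence — this is where the additive (independent-increment) structure is essential and is the main obstacle. The natural events are $B_n\coloneqq\{Y_{f(t_n)}<t_n\}$, but these are not independent; the standard fix is to replace them with increments over disjoint time-blocks. Concretely, I would consider $\wt B_n\coloneqq\{Y_{f(t_n)}-Y_{f(t_{n+1})}<\text{(something like }t_n-\text{ small)}\}$ — no, rather: to force $L_{t_n}<f(t_{n+1})$ for infinitely many $n$ we want $Y_{f(t_{n+1})}\ge t_n$ infinitely often, i.e.\ a big jump of $Y$ before time $f(t_{n+1})$; decompose $Y_{f(t_{n+1})}$ using the disjoint blocks $(f(t_{n+2}),f(t_{n+1})]$ and show these independent increments each have a not-too-small chance of exceeding $t_n$. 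The probability of the block event is bounded below via the reverse inequality to the Chernoff bound: $\p(Y_{f(t_{n+1})}-Y_{f(t_{n+2})}\ge t_n)\ge \exp(-\Psi_{f(t_{n+1})}(\theta_n))-\exp(-\theta_n t_n)$ using the elementary inequality $\1_{\{y\ge t\}}\ge e^{\theta t}(e^{-\theta t'}\cdot\text{...})$ — more carefully, $\p(Z\ge t)\ge \E[e^{-\theta Z}]-e^{-\theta t}$ whenever... — one shows $\p(Z\ge t)\ge (\E[e^{-\theta Z}\1_{\{Z<t\}}])$ type estimates; this is precisely the quantity $\exp(-\Psi_{f(t_n)}(\theta_n))-\exp(-\theta_n t_n)$ appearing in the divergence hypothesis. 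The condition $\lim_u\phi(u)u=\infty$ ensures $\theta_n t_n\to\infty$ so that the correction terms $e^{-\theta_n t_n}$ are summable and the crude replacement of $Y_{f(t_{n+1})}$ by a single block increment (discarding the earlier part of the path, which is non-negative) is harmless. The remaining hypothesis $\sum_n\Psi_{f(t_{n+1})}(\theta_n)<\infty$ is there to handle the "overshoot"/correlation cleanup: it controls the contribution of the blocks outside the one we care about, letting a conditional Borel--Cantelli (or a direct second-moment / Kochen--Stone argument on the genuinely independent block events) go through, yielding that infinitely often $Y_{f(t_{n+1})}\ge t_n$, hence $L_{t_n}\le f(t_{n+1})$... wait, that gives an upper bound again — for the \emph{lower} bound $\limsup L_t/f(t)\ge1$ I instead want $L_{t_n}\ge f(t_n)(1-o(1))$ infinitely often, i.e.\ $Y_{f(t_n)-}< t_n$, which is the complementary direction; so the divergence condition should be applied to $\p(Y_{f(t_n)}<t_n)$ staying bounded below, via $\p(Y_{f(t_n)}<t_n)\ge \exp(-\Psi_{f(t_n)}(\theta_n)) - \text{(something)}$, and then one upgrades "$Y_{f(t_n)}<t_n$ i.o." to "$L_{t_n}\ge f(t_n)$ i.o." directly, whence $\limsup_{t\da0}L_t/f(t)\ge\limsup_n L_{t_n}/f(t_n)\ge1$. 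I anticipate the delicate point will be converting the non-independent events $\{Y_{f(t_n)}<t_n\}$ into a form where either Kochen--Stone (using $\sum\Psi_{f(t_{n+1})}(\theta_n)<\infty$ to bound pairwise correlations, exploiting that $Y_{f(t_{n+1})}$ is a small perturbation of $Y_{f(t_n)}$) or a conditional Borel--Cantelli along the filtration generated by the block increments applies; this bookkeeping, rather than any single estimate, is the crux of part (b).
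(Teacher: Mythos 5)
Your overall strategy matches the paper's: Chernoff bound via $e^{-\theta Y}$ plus the first Borel--Cantelli lemma and a monotone interpolation $t\in[t_{n+1},t_n]$ for part (a), and the second Borel--Cantelli lemma applied to independent block increments $Y_{f(t_n)}-Y_{f(t_{n+1})}$ for part (b), with $\sum_n\Psi_{f(t_{n+1})}(\theta_n)<\infty$ controlling the residual $Y_{f(t_{n+1})}$. However, your part (a) as written has a chain of sign errors that happen to cancel: you set $A_n=\{Y_{f(t_n)}\ge t_n\}$, but the identity $\{Y\ge t\}=\{e^{-\theta Y}\ge e^{-\theta t}\}$ you invoke is false (it should be $\le$ on the right), so the Markov/Chernoff step bounds $\p(Y_{f(t_n)}\le t_n)$, not $\p(A_n)$; and the later claim ``$Y_{f(t_n)}<t_n$ means $L_{t_n}>f(t_n)$ is false'' is backwards since, using $\{L_t>s\}=\{Y_s\le t\}$, it in fact means $L_{t_n}>f(t_n)$ is true. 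The correct chain is $A_n=\{Y_{f(t_n)}\le t_n\}=\{L_{t_n}>f(t_n)\}$, $\p(A_n)\le e^{\theta_nt_n}\E[e^{-\theta_nY_{f(t_n)}}]$, and then Borel--Cantelli yields $L_{t_n}\le f(t_n)$ eventually; with that fix the monotonicity interpolation you describe gives the stated bound.

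For part (b), you identify the right ingredients but never execute the key estimate. The paper does not need Kochen--Stone: it works with the genuinely independent events $B_n=\{Y_{f(t_n)}-Y_{f(t_{n+1})}\le t_n\}\supset\{Y_{f(t_n)}\le t_n\}$, shows $\sum_n\p(B_n)=\infty$ via the Paley--Zygmund-type lower bound
\begin{equation*}
\p\big(Y_{f(t_n)}\le t_n\big)\ge\frac{\exp(-\Psi_{f(t_n)}(\theta_n))-\exp(-\theta_nt_n)}{1-\exp(-\theta_nt_n)},
\end{equation*}
obtained from $\p(Z>s)\le(1-e^{-\theta s})^{-1}\E[1-e^{-\theta Z}]$, and then separately proves $\limsup_nY_{f(t_{n+1})}/t_n\le\delta$ for any $\delta>0$ from $\sum_n\Psi_{f(t_{n+1})}(\theta_n)<\infty$ via another Markov bound. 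The assumption $\phi(u)u\to\infty$ is not used to make $e^{-\theta_nt_n}$ summable as you suggest, but to force the denominators $1-\exp(-\theta_nt_n)$ and $1-\exp(-\delta\theta_nt_n)$ to tend to $1$, so that the lower bound on $\p(B_n)$ diverges and the upper bound on $\p(Y_{f(t_{n+1})}>\delta t_n)$ is comparable to $\Psi_{f(t_{n+1})}(\theta_n)$. Your proposal gestures at both of these but does not pin them down, and your first attempt in part (b) (forcing $Y_{f(t_{n+1})}\ge t_n$) really does go the wrong way, as you yourself noticed mid-sentence.
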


\begin{remark}
\label{rem:limsup_L}\phantom{empty}
\begin{itemize}[leftmargin=2em, nosep]
\item[(a)] Theorem~\ref{thm:limsup_L} plays a key role in the proofs of Theorems~\ref{thm:upper_fun_C'_post_min} and~\ref{thm:lower_fun_C'}. Before applying Theorem~\ref{thm:limsup_L}, one needs to find appropriate choices of the free infinite-dimensional parameters $h$ and $(\theta_n)_{n\in\N}$. This makes the application of Theorem~\ref{thm:limsup_L} hard in general and is why, in Theorems~\ref{thm:upper_fun_C'_post_min} and~\ref{thm:lower_fun_C'}, we are required to assume that $X$ lies in the domain of attraction of an $\alpha$-stable process.
\item[(b)] If $Y$ has stationary increments (making $Y$ a subordinator), the proof of~\cite[Thm~4.1]{MR1746300} follows from Theorem~\ref{thm:limsup_L} by finding an appropriate function $f$ and sequences $(\theta_n)_{n\in\N}$ (done in~\cite[Lem.~4.2 \& 4.3]{MR1746300}) satisfying the assumptions of Theorem~\ref{thm:limsup_L}. In this case, the function $f$ is given in terms of the single-parameter Laplace exponent $\Psi_1$, see details in~\cite[Thm~4.1]{MR1746300}.\qedhere
\end{itemize}
\end{remark}

\begin{proof}[Proof of Theorem~\ref{thm:limsup_L}]
(a) Since $L$ is the right-inverse of $Y$, we have $\{L_{t_n} > f(t_n)\} = \{t_n \ge Y_{f(t_n)}\}$ for $n \in \N$. Using Chernoff's bound (Markov's inequality), we obtain 
\[
\p\big(t_n \ge Y_{f(t_n)}\big)
\le e^{\theta_n t_n}\E\big[\exp\big(-\theta_n Y_{f(t_n)}\big)\big]
=\exp(\theta_n t_n -\Psi_{f(t_n)}(\theta_n)), \quad \text{ for all }n \ge 1.
\] 
The assumption $\sum_{n=1}^\infty \exp(\theta_n t_n-\Psi_{f(t_n)}(\theta_n))<\infty$ thus implies $\sum_{n=1}^\infty \p (L_{t_n}>f(t_n))<\infty$. Hence, the Borel--Cantelli lemma 
yields $\limsup_{n \to \infty} L_{t_n}/f(t_n)\le 1$ a.s. Since $L$ is non-decreasing and $(t_n)_{n\in \N}$ is decreasing monotonically to zero, we have 
\begin{equation*}
\limsup_{t \da 0} \frac{L_t}{f(t)}
\le\limsup_{n \to\infty}\sup_{t\in[t_{n+1},t_n]} \frac{L_{t_n}}{f(t)}
\le \limsup_{n \to \infty} \frac{L_{t_n}}{f(t_n)}\cdot\limsup_{n \to\infty} \frac{f(t_n)}{f(t_{n+1})}  
\le \limsup_{n \to\infty} \frac{f(t_n)}{f(t_{n+1})} \quad \text{ a.s.,}
\end{equation*}
which gives (a).

(b) It suffices to establish that the following limits hold: $\liminf_{n\to\infty}(Y_{f(t_n)}-Y_{f(t_{n+1})})/t_n\le 1$ a.s. and $\limsup_{n\to\infty}Y_{f(t_{n+1})}/t_n\le \delta$ a.s. for any $\delta>0$. Indeed, by taking $\delta\da 0$ along a countable sequence, the second limit gives  $\limsup_{n\to\infty}Y_{f(t_{n+1})}/t_n=0$ a.s. and hence $\liminf_{n \to \infty} Y_{f(t_n)}/ t_n\le 1$ a.s. For any $t>0$ with $Y_{f(t)}\le t$ we have $L_t>f(t)$. Since the former holds for arbitrarily small values of $t>0$ a.s., we obtain $\limsup_{t \da 0}L_t/f(t)\ge 1$ a.s.

We will prove that $\liminf_{n\to\infty}(Y_{f(t_n)}-Y_{f(t_{n+1})})/t_n\le 1$ a.s. and  $\limsup_{n\to\infty}Y_{f(t_{n+1})}/t_n\le \delta$ a.s. for any $\delta>0$, using the Borel--Cantelli lemmas.
Applying Markov's inequality, we obtain the upper bound $\p(Y_t> s)\le(1-e^{-\theta s})^{-1}\E[1-e^{-\theta Y_t}]$ for all $t,s,\theta>0$, implying
\begin{equation*}
\p\big(Y_{f(t_n)}\le t_n\big) 
\ge \frac{\exp(-\Psi_{f(t_n)}(\theta_n))-\exp(-\theta_n t_n)}
    {1-\exp(-\theta_n t_n)}, \qquad \text{ for all }n \ge 1.
\end{equation*}
Since $\theta_nt_n=\theta_n\phi(\theta_n)\to \infty$ as $n \to \infty$, the denominator of the lower bound in the display above tends to $1$ as $n \to \infty$, and hence the assumption $\sum_{n=1}^\infty [\exp(-\Psi_{f(t_n)}(\theta_n))-\exp(-\theta_n t_n)]=\infty$ implies $\sum_{n=1}^\infty \p (Y_{f(t_n)}<t_n)=\infty$. Since $Y$ has non-negative independent increments and 
\begin{equation*}
\sum_{n=1}^\infty \p(Y_{f(t_n)}-Y_{f(t_{n+1})}<t_n) 
\ge\sum_{n=1}^\infty\p(Y_{f(t_n)}< t_n)=\infty,
\end{equation*} 
the second Borel--Cantelli lemma yields $\liminf_{n\to\infty}(Y_{f(t_n)}-Y_{ f(t_{n+1})})/ t_n \le 1$ a.s. 

To prove the second limit, use Markov's inequality and the elementary bound $1-e^{-x}\le x$ to get
\begin{equation*}
\p\big(Y_{f(t_{n+1})}> \delta t_n\big)
\le \frac{\E[1-\exp(-\theta_n Y_{ f(
t_{n+1})})]}{1-\exp(-\delta\theta_n t_n)}  
= \frac{1-\exp(-\Psi_{ f(t_{n+1})}(\theta_n))}{1-\exp(-\delta\theta_n t_n)}
\le \frac{\Psi_{ f(t_{n+1})}(\theta_n)}{1-\exp(-\delta\theta_n t_n)},
\end{equation*}
for all $n \in \N$. Again, the denominator tends to $1$ as $n\to\infty$ and the assumption $\sum_{n = 1}^\infty\Psi_{f(t_{n+1})}(\theta_n)<\infty$ implies $\sum_{n=1}^\infty\p(Y_{ f(t_{n+1})}> \delta t_n)<\infty$. The Borel--Cantelli lemma implies $\limsup_{n\to\infty}Y_{f(t_{n+1})}/t_n\le \delta$ a.s. and completes the proof.
\end{proof}

\subsection{Lower functions of {\nf{\emph{L}}}}
\label{subsec:lower_func_gen}
To describe the lower fluctuations of $L$, it suffices to describe the upper fluctuations of $Y$. The following result extends known results for subordinators (see, e.g.~\cite[Thm 1]{MR210190}). Given a continuous increasing function $h$ with $h(0)=0$ and $h(1)=1$, consider the following statements, used in the following result to describe the upper fluctuations of~$Y$: 
\begin{gather}
\label{eq:h_limsup}
\limsup_{t\da0}Y_t/h(t)=0,\quad \text{a.s.},\\
\label{eq:h_leq_io}
\limsup_{t\da0}Y_t/h(t)<1, \quad \text{a.s.},\\
\label{eq:Pi_large}
\Pi(\{(t,x)\,:\,t\in(0,1],\,x\ge h(t)\})<\infty,\\
\label{eq:Pi_var}
\int_{(0,1]\times(0,1)}\frac{x^2}{h(t)^2}\1_{\{2h(t)>x\}}
    \Pi(\D t,\D x)<\infty,\\
\label{eq:Pi_mean}
2^n\int_{(0,h^{-1}(2^{-n})]\times (0,2^{-n})} x\1_{\{2h(t)>x\}} \Pi(\D t,\D x) \da 0, 
\quad\text{ as }n\to\infty, \quad \text{ and }\\
\label{eq:Pi_mean_var}
\int_{(0,1]\times(0,1)}\frac{x}{h(t)}\1_{\{2h(t)>x\}}
    \Pi(\D t,\D x)<\infty.
\end{gather}

\begin{theorem}\label{thm:Y_limsup}
Let $h$ be continuous and increasing with $h(0)=0$ and $h(1)=1$. Then the following implications hold: 
\begin{multicols}{3}
\begin{itemize}[leftmargin=2em, nosep]
    \item[\nf{(a)}] \eqref{eq:h_limsup}$\implies$\eqref{eq:h_leq_io}$\implies$\eqref{eq:Pi_large},
    \item[\nf{(b)}] \eqref{eq:Pi_large}--\eqref{eq:Pi_mean}$\implies$\eqref{eq:h_limsup},
    \item[\nf{(c)}] \eqref{eq:Pi_mean_var}$\implies$\eqref{eq:Pi_var}--\eqref{eq:Pi_mean}.
\end{itemize}
\end{multicols}
\end{theorem}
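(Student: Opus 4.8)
The plan is to prove the three implications separately: (a) via the Poisson point process structure of the jumps of $Y$, (c) via an elementary pointwise domination, and (b)---the substantive one---via a large-jump/small-jump decomposition combined with a second-moment Borel--Cantelli argument fed by \eqref{eq:Pi_var}--\eqref{eq:Pi_mean}.

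For (a), the implication \eqref{eq:h_limsup}$\implies$\eqref{eq:h_leq_io} is trivial, so only \eqref{eq:h_leq_io}$\implies$\eqref{eq:Pi_large} needs work, and I would argue by contraposition. The jumps $(t,\Delta Y_t)$ of $Y$ form a Poisson point process with intensity $\Pi$. For each $\delta\in(0,1)$ we have $\Pi(\{(t,x):t\in(\delta,1],\,x\ge h(t)\})\le\Pi(\{(t,x):t\in(0,1],\,x\ge h(\delta)\})<\infty$, the finiteness because $\Psi_1(u)<\infty$ forces $\int_{[h(\delta),\infty)}\Pi((0,1],\D x)<\infty$; hence, if \eqref{eq:Pi_large} fails, then $\Pi(\{(t,x):t\in(0,\delta],\,x\ge h(t)\})=\infty$ for every $\delta>0$, so the jump point process a.s. has a sequence of points $(t_i,x_i)$ with $t_i\da0$ and $x_i\ge h(t_i)$. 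Since $Y$ is non-decreasing with $Y_0=0$, this gives $Y_{t_i}\ge x_i\ge h(t_i)$, whence $\limsup_{t\da0}Y_t/h(t)\ge1$ a.s., contradicting \eqref{eq:h_leq_io}. For (c), on $\{2h(t)>x\}$ one has $x/h(t)<2$, so $x^2/h(t)^2\le2x/h(t)$ there and \eqref{eq:Pi_var} follows from \eqref{eq:Pi_mean_var}; and on the domain of \eqref{eq:Pi_mean} one has $t\le h^{-1}(2^{-n})$, i.e. $2^n\le1/h(t)$, so $2^nx\le x/h(t)$ and the left side of \eqref{eq:Pi_mean} is bounded above by $\int_{(0,1]\times(0,1)}\1_{\{t\le h^{-1}(2^{-n})\}}(x/h(t))\1_{\{2h(t)>x\}}\Pi(\D t,\D x)$, which tends to $0$ by dominated convergence, the integrand being dominated by the $\Pi$-integrable function of \eqref{eq:Pi_mean_var} and $\1_{\{t\le h^{-1}(2^{-n})\}}\da0$ pointwise on $\{t>0\}$.

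For (b), assuming \eqref{eq:Pi_large}--\eqref{eq:Pi_mean}, I would realise the jumps of $Y$ on $(0,1]$ as a Poisson point process with intensity $\Pi$ and split $Y=Y^{\mathrm b}+Y^{\mathrm s}$ on $[0,1]$ into the jumps with $\Delta Y_t\ge h(t)$ and those with $\Delta Y_t<h(t)$. By \eqref{eq:Pi_large}, $Y^{\mathrm b}$ has finitely many jumps in $(0,1]$ a.s., so $Y^{\mathrm b}_t=0$ for all small $t$, reducing everything to $\limsup_{t\da0}Y^{\mathrm s}_t/h(t)=0$. Sampling dyadically at $s_n\coloneqq h^{-1}(2^{-n})$ (so $h(s_n)=2^{-n}$ and $h(t)>2^{-n-1}$ on $(s_{n+1},s_n]$) and using monotonicity of $Y^{\mathrm s}$, it suffices to show $\limsup_n2^nY^{\mathrm s}_{s_n}=0$ a.s. By Campbell's formula, $Y^{\mathrm s}_{s_n}$ has mean $\int_{(0,s_n]\times(0,h(t))}x\,\Pi$ and variance $\int_{(0,s_n]\times(0,h(t))}x^2\,\Pi$, both finite by \eqref{eq:Pi_mean}, \eqref{eq:Pi_var} and $h(t)\le2^{-n}$ on $\{t\le s_n\}$. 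Condition \eqref{eq:Pi_mean} (with $\1_{\{x<h(t)\}}\le\1_{\{2h(t)>x\}}$) gives $2^n\E[Y^{\mathrm s}_{s_n}]\to0$, so for fixed $\ve>0$ and $n$ large Chebyshev's inequality yields $\p(Y^{\mathrm s}_{s_n}>\ve2^{-n})\le\p(Y^{\mathrm s}_{s_n}-\E[Y^{\mathrm s}_{s_n}]>\tfrac\ve22^{-n})\le\tfrac{4}{\ve^2}2^{2n}\Var(Y^{\mathrm s}_{s_n})$, and
\[
\sum_{n\ge1}2^{2n}\Var(Y^{\mathrm s}_{s_n})
=\int\Big(\sum_{n:\,2^n\le1/h(t)}2^{2n}\Big)x^2\1_{\{x<h(t)\}}\,\Pi(\D t,\D x)
\le\tfrac43\int_{(0,1]\times(0,1)}\frac{x^2}{h(t)^2}\1_{\{2h(t)>x\}}\,\Pi(\D t,\D x)<\infty
\]
by \eqref{eq:Pi_var}, since $\sum_{n:\,2^n\le1/h(t)}2^{2n}\le\tfrac43h(t)^{-2}$. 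Borel--Cantelli then gives $\limsup_n2^nY^{\mathrm s}_{s_n}\le\ve$ a.s. for every $\ve>0$, hence $=0$ a.s., proving (b).

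The hard part will be the bookkeeping in (b): aligning the precise domains and indicator functions of \eqref{eq:Pi_var} and \eqref{eq:Pi_mean} with the mean and variance of the dyadically sampled process $Y^{\mathrm s}_{s_n}$, checking in particular that truncating the decomposition at level $h(t)$ rather than $2h(t)$ is absorbed by the factor-$2$ slack built into those conditions, and that the geometric sum $\sum_{2^n\le1/h(t)}2^{2n}$ reproduces exactly the weight $h(t)^{-2}$ needed to invoke \eqref{eq:Pi_var}.
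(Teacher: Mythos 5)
Your proposal is correct and, for the substantive part (b), matches the paper's argument (encapsulated in Lemma~\ref{lem:upper_func_Y}) in every essential respect: kill the big jumps via \eqref{eq:Pi_large} and Borel--Cantelli, sample the small-jump part dyadically at $h^{-1}(2^{-n})$, control the mean via \eqref{eq:Pi_mean} and the variance via \eqref{eq:Pi_var} exactly as you do, and conclude by a second-moment Borel--Cantelli argument. The only cosmetic differences are that the paper truncates the small jumps at the dyadic level $2^{-m}$ rather than at $h(u)$ itself and organises the computation through the time-changed Poisson measure $N_h$, closing with the observation that $\sum_n(\zeta_n-\E\zeta_n)^2$ has finite mean in place of your Chebyshev-plus-Borel--Cantelli step, while (a) is the contrapositive of the paper's direct argument and (c) uses dominated rather than monotone convergence.
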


\begin{figure}
\centering
\begin{tikzpicture}[node distance=1cm and 3.4cm]
\node[rectangle,draw] (a) {\eqref{eq:h_limsup}};
\node[rectangle,draw] (b)[below=of a]{\eqref{eq:h_leq_io}};
\node[rectangle,draw] (c)[right=of a]{\eqref{eq:Pi_mean}};
\node[rectangle,draw] (d)[below=of c]{\eqref{eq:Pi_var}};
\node[rectangle,draw] (e)[below=of d]{\eqref{eq:Pi_large}};
\node[rectangle,draw] (f)[right=of c]{\eqref{eq:Pi_mean_var}};
\node[rectangle,draw] (g)[right=of e]{\eqref{eq:Pi_var_inv}};
\node[rectangle,draw] (h)[right=of g]{\eqref{eq:Pi_mean_inv}};
\node[rectangle,draw] (FS)[right=of f]{\eqref{eq:Pi_mean_var_inv}};

\draw [decorate, decoration = {brace, raise=10pt, amplitude=10pt, mirror},thick] (4.15,0.2) -- (4.15,-3.2);
\draw[-implies,double equal sign distance] (a) -- (b);
\draw[implies-,double equal sign distance] (c) -- (f);
\draw[implies-,double equal sign distance] (d) -- (f);
\draw[-implies,double equal sign distance] (b) .. controls +(down:20mm) and +(left:20mm) .. (e);
\draw[implies-,double equal sign distance] (a)--(3.45,-1.5);
\draw[implies-,double equal sign distance] (d) -- (g) node[pos=0.5,above,sloped]{$h^{-1}$ concave};
\draw [decorate, decoration = {brace, raise=10pt, amplitude=10pt, mirror},thick] (4,-3.15) -- (13.1,-3.15);
\draw[-implies,double equal sign distance] (FS)--(g)node[pos=0.5,above,sloped]{$h^{-1}$ concave};
\draw[-implies,double equal sign distance] (FS)--(h)node[pos=0.5,above,sloped]{$h^{-1}$ concave};
\draw[implies-,double equal sign distance] (a) .. controls (-1.6,-3.9) .. (8.55,-3.9)node[pos=0.8,below,sloped]{$h^{-1}$ concave};
\draw[-,thick] (3.45,-1.5) -- (d);
\draw[-,thick] (8.55,-3.85) -- (g);
\end{tikzpicture}
\caption{A graphical representation of the implications in Theorem~\ref{thm:Y_limsup} and Proposition~\ref{prop:Y_limsup}.} \label{fig:overview_technical_results}
\end{figure}
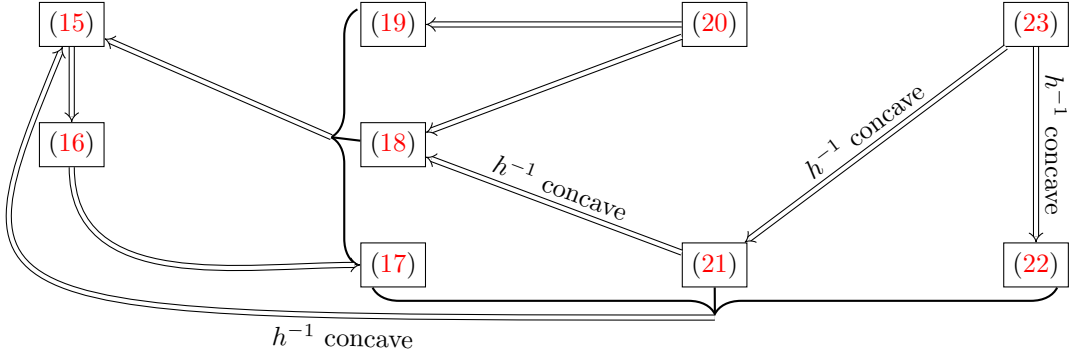

\begin{remark}\label{rem:Y_limsup}\nf
If $h$ is as in Theorem~\ref{thm:Y_limsup} and $\Pi(\{(t,x)\,:\,t\in(0,1],\,x\ge ch(t)\})=\infty$ for all $c>0$, then it follows from the negation of Theorem~\ref{thm:Y_limsup}(a) that $\limsup_{\da0}Y_t/h(t)=\infty$ a.s.
\end{remark}

In the description of the lower fluctuations of $L$, we are typically given the function $h^{-1}$ directly instead of $h$. In those cases, the conditions in Theorem~\ref{thm:Y_limsup} may be hard to verify directly (see e.g. the proof of Theorem~\ref{thm:C'_limsup}(i)). To alleviate this issue, we introduce alternative conditions describing the upper fluctuations of $Y$ in terms of the function $h^{-1}$. However, this requires the additional assumption that $h^{-1}$ is concave, see Proposition~\ref{prop:Y_limsup} below. Consider the following conditions on $h^{-1}$:

\begin{gather}
\label{eq:Pi_var_inv}
\int_{(0,1]\times(0,1)}\frac{h^{-1}(x)^2}{t^2}\1_{\{2t\ge h^{-1}(x)\}}\Pi(\D t, \D x)<\infty,\\
\label{eq:Pi_mean_inv}
2^n\int_{(0,2^{-n}]\times(0,h(2^{-n}))}h^{-1}(x)\1_{\{2t\ge h^{-1}(x)\}}\Pi(\D t, \D x)\da 0,
\quad\text{as }n\to\infty, \quad \text{ and }\\
\label{eq:Pi_mean_var_inv}
\int_{(0,1]\times(0,1)}\frac{h^{-1}(x)}{t}\1_{\{2t\ge h^{-1}(x)\}}\Pi(\D t,\D x)<\infty.
\end{gather}

\begin{proposition}
\label{prop:Y_limsup}
Let $h$ be convex and increasing with $h(0)=0$ and $h(1)=1$. Then the following implications hold: 
\begin{multicols}{3}
\begin{itemize}[leftmargin=2em, nosep]
    \item[\nf{(a)}] \eqref{eq:Pi_var_inv}$\implies$\eqref{eq:Pi_var},
    \item[\nf{(b)}] \eqref{eq:Pi_mean_var_inv}$\implies$\eqref{eq:Pi_var_inv}--\eqref{eq:Pi_mean_inv},
    \item[\nf{(c)}] \eqref{eq:Pi_large} and \eqref{eq:Pi_var_inv}--\eqref{eq:Pi_mean_inv}$\implies$\eqref{eq:h_limsup}.
\end{itemize}
\end{multicols}
\end{proposition}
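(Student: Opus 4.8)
The plan is to prove each of the three implications by comparing the integrands involving $h^{-1}$ with those involving $h$, exploiting the concavity of $h^{-1}$ (equivalently the convexity of $h$) to pass between the two indicator constraints $\{2h(t)>x\}$ and $\{2t\ge h^{-1}(x)\}$ and between the weights $x/h(t)$ and $h^{-1}(x)/t$. The key elementary observation is this: since $h$ is increasing, $2t\ge h^{-1}(x)$ is equivalent to $h(2t)\ge x$, and concavity of $h^{-1}$ with $h^{-1}(0)=0$ gives $h^{-1}(2x)\le 2h^{-1}(x)$, i.e. $h(2t)\le 2h(t)$ after substituting $t=h^{-1}(x)$; dually $h^{-1}$ concave also yields, for $0<x\le y$, the superadditivity-type bound $h^{-1}(x)/x\ge h^{-1}(y)/y$, so $h^{-1}(x)/t$ and $x/h(t)$ are comparable up to a factor $2$ precisely on the relevant region. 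I would first record these two facts as a short preliminary lemma (or inline observations), since all three parts reduce to them.

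For part (a), I would show $\{2h(t)>x\}$ and the weight $h^{-1}(x)^2/t^2$ dominate (up to constants) $\{2t\ge h^{-1}(x)\}$ and $x^2/h(t)^2$ on the overlap, so that~\eqref{eq:Pi_var_inv} majorises~\eqref{eq:Pi_var}. Concretely: on $\{2t\ge h^{-1}(x)\}$ we have $h(2t)\ge x$, hence by $h(2t)\le 2h(t)$ we get $x\le 2h(t)$, so $\{2t\ge h^{-1}(x)\}\subseteq\{2h(t)>x\}$ (after a harmless boundary adjustment). On that set, $h^{-1}(x)\le 2t$ so $h^{-1}(x)/t\le 2$, while $x/h(t)<2$ as well; the comparison $h^{-1}(x)/x\ge h^{-1}(h(t))/h(t)\cdot(\text{const})$ — more carefully, from $x\le 2h(t)$ and concavity, $h^{-1}(x)\ge h^{-1}(2h(t))\cdot x/(2h(t))\ge t\cdot x/(2h(t))$ — gives $h^{-1}(x)/t\ge x/(2h(t))$, hence $(h^{-1}(x)/t)^2\ge (x/h(t))^2/4$. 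Therefore the integrand in~\eqref{eq:Pi_var} is at most $4$ times that in~\eqref{eq:Pi_var_inv} on a region containing the support of the former's indicator, giving (a). Part (b) is the analogue of Theorem~\ref{thm:Y_limsup}(c) with $h$ replaced by $h^{-1}$: I would split the integrand $h^{-1}(x)/t$ as $(h^{-1}(x)/t)\wedge 1$ plus $(h^{-1}(x)/t)\1_{\{h^{-1}(x)>t\}}$, bound the square by the first power times $1$ to get~\eqref{eq:Pi_var_inv}, and for~\eqref{eq:Pi_mean_inv} use that the dyadic tail sum $\sum_n 2^n\int_{(0,2^{-n}]\times(0,h(2^{-n}))}h^{-1}(x)\1\,\Pi$ is (by Fubini/summation by parts, exactly as in the proof of Theorem~\ref{thm:Y_limsup}(c)) dominated by the finite integral~\eqref{eq:Pi_mean_var_inv}, so the terms go to $0$.

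Part (c) is the substantive one and I expect it to be the main obstacle, though it is largely a matter of assembling the pieces. The goal is to deduce~\eqref{eq:h_limsup} from~\eqref{eq:Pi_large} together with~\eqref{eq:Pi_var_inv}--\eqref{eq:Pi_mean_inv}; since Theorem~\ref{thm:Y_limsup}(b) already gives~\eqref{eq:h_limsup} from~\eqref{eq:Pi_large}--\eqref{eq:Pi_mean}, it suffices to show that~\eqref{eq:Pi_var_inv}--\eqref{eq:Pi_mean_inv} (under convexity of $h$) imply~\eqref{eq:Pi_var}--\eqref{eq:Pi_mean}. For~\eqref{eq:Pi_var} this is exactly part (a). The remaining task is~\eqref{eq:Pi_mean}: I must bound $2^n\int_{(0,h^{-1}(2^{-n})]\times(0,2^{-n})}x\1_{\{2h(t)>x\}}\Pi(\D t,\D x)$ in terms of the $h^{-1}$-indexed quantities. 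Here I would change the region of integration: on $\{2h(t)>x\}$ one has $h^{-1}(x/2)<t$, hence $h^{-1}(x)\le 2h^{-1}(x/2)<2t$ by concavity, so $\{2h(t)>x\}\subseteq\{2t>h^{-1}(x)\}$; and $t\le h^{-1}(2^{-n})$ forces, via $x<2h(t)\le 2\cdot 2^{-n}$ roughly, that $x$ is of order at most $2^{-n}$ — more precisely one checks $\{t\le h^{-1}(2^{-n})\}\cap\{2h(t)>x\}\subseteq\{x<2^{-n+1}\}$, which is comparable to the region $(0,2^{-n}]\times(0,h(2^{-n}))$ appearing in~\eqref{eq:Pi_mean_inv} after a shift in the dyadic index $n\mapsto n-1$ (absorbed by the factor $2$). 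On this region $x\le 2h(t)$ gives $x\le 2h(t)\le$ a constant times $t\cdot x/h^{-1}(x)\cdot(\dots)$ — the clean inequality being $x\le 2h(t)$ combined with $h^{-1}(x)/x$ decreasing to yield $h^{-1}(x)\le $ (const)$\,t$, and then $x\le (\text{const})\, t\,x/h^{-1}(x)$ is trivially true, so I instead bound $x$ directly by noting $x = h(h^{-1}(x))\le h(2t)\le 2h(t)$ is not what I want; rather I want $x\lesssim h^{-1}(x)\cdot 2^{n}$, which follows since on the shifted region $h^{-1}(x)> t\cdot(\text{something})$... The honest statement: because $h^{-1}$ is concave with $h^{-1}(1)=1$, for $x\le 1$ we have $h^{-1}(x)\ge x$, hence $x\le h^{-1}(x)$, so $2^n\int x\1\,\Pi\le 2^n\int h^{-1}(x)\1\,\Pi$, and after matching the regions (using the inclusion of indicator sets above and a shift of the dyadic level, both costing only universal constants), this last quantity is the term in~\eqref{eq:Pi_mean_inv}, which tends to $0$. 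Thus~\eqref{eq:Pi_mean} holds, and Theorem~\ref{thm:Y_limsup}(b) delivers~\eqref{eq:h_limsup}. The main delicacy is keeping the dyadic index shifts and the factor-$2$ losses in the indicator inclusions consistent; none of it is deep, but it must be done carefully, and I would state the region inclusions $\{2h(t)>x\}\subseteq\{h^{-1}(x)<2t\}$ and its reverse-type counterpart explicitly as a first step so the rest is bookkeeping.
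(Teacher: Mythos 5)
Your strategy for part (c) is genuinely different from the paper's. The paper does not attempt to show that the $h^{-1}$-conditions \eqref{eq:Pi_var_inv}--\eqref{eq:Pi_mean_inv} imply the $h$-conditions \eqref{eq:Pi_var}--\eqref{eq:Pi_mean}; instead it introduces the auxiliary additive process $\zeta_t\coloneqq\sum_{u\le t}h^{-1}(\Delta_u)$, notes that subadditivity of the concave function $h^{-1}$ (with $h^{-1}(0)=0$) gives $\zeta_t\ge h^{-1}(Y_t)$, and observes that the jump measure of $\zeta$ is the push-forward of $\Pi$ by $(t,x)\mapsto(t,h^{-1}(x))$, so that applying Theorem~\ref{thm:Y_limsup} to $\zeta$ with the \emph{identity} function makes conditions \eqref{eq:Pi_large}--\eqref{eq:Pi_mean} for $\zeta$ translate exactly into \eqref{eq:Pi_large} and \eqref{eq:Pi_var_inv}--\eqref{eq:Pi_mean_inv} for $Y$. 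Since $\limsup_{t\da0}\zeta_t/t=0$ forces $\limsup_{t\da0}Y_t/h(t)=0$ (using $h(ct)\le ch(t)$ for $c\in(0,1)$), this gives (c) without any region-matching. Parts (a) and (b) then come essentially for free. Your direct comparison route is valid and instructive, but it trades the change-of-process for delicate dyadic bookkeeping, which is exactly where your write-up has a gap.

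The gap is in your argument that \eqref{eq:Pi_mean_inv}$\implies$\eqref{eq:Pi_mean}. You settle on the inequality $x\le h^{-1}(x)$ together with a ``shift of dyadic index ... costing only universal constants.'' This does not hold: the $t$-range in \eqref{eq:Pi_mean} is $(0,h^{-1}(2^{-n})]$, so the matching index $m$ in \eqref{eq:Pi_mean_inv} must satisfy $2^{-m}\approx h^{-1}(2^{-n})$, and the ratio $2^n/2^m\approx 2^n h^{-1}(2^{-n})$ is generally unbounded (e.g.\ $h(t)=t^2$ gives $2^n/2^m\approx2^{n/2}$). The mere bound $x\le h^{-1}(x)$ cannot absorb this. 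What you actually need --- and which you reach for but drop --- is the sharper consequence of concavity: since $x\mapsto h^{-1}(x)/x$ is decreasing, on $\{x<2^{-n}\}$ one has $x\le 2^{-n}h^{-1}(x)/h^{-1}(2^{-n})$. Combined with the region inclusion $\{2h(t)>x\}\subset\{2t>h^{-1}(x)\}$ and the choice $2^{-m-1}<h^{-1}(2^{-n})\le2^{-m}$, this gives
\[
2^n\!\!\int_{(0,h^{-1}(2^{-n})]\times(0,2^{-n})}\!\!\!\!\!\!x\,\1_{\{2h(t)>x\}}\,\Pi(\D t,\D x)
\;\le\;\frac{1}{h^{-1}(2^{-n})}\int_{(0,2^{-m}]\times(0,h(2^{-m}))}\!\!\!\!\!\!h^{-1}(x)\,\1_{\{2t>h^{-1}(x)\}}\,\Pi(\D t,\D x),
\]
and the right side is at most twice the quantity appearing in \eqref{eq:Pi_mean_inv} at level $m$, which vanishes as $n\to\infty$ since $m\to\infty$. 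With this correction your direct route does close, but as stated it does not, and the paper's $\zeta$-device is the cleaner way to avoid exactly this pitfall.
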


The relation between the assumptions of Theorem~\ref{thm:Y_limsup} and Proposition~\ref{prop:Y_limsup} (concerning $h$ and $h^{-1}$) is described in Figure~\ref{fig:overview_technical_results}. The following elementary result explains how the upper fluctuations of $Y$ (described by Theorem~\ref{thm:Y_limsup}) are related to the lower fluctuations of $L$. 

\begin{lemma}
\label{lem:Y_vs_L}
Let $h$ be a continuous increasing function with $h(0)=0$ and denote by $h^{-1}$ its inverse. Then the following implications hold for any $c>0$:
\begin{itemize}[leftmargin=2em, nosep]
    \item[\nf{(a)}] $\liminf_{t\da0}L_t/h^{-1}(t/c)>1
    \implies\limsup_{t\da0}Y_t/h(t)\le c$,
    \item[\nf{(b)}] $\limsup_{t\da0}Y_t/h(t)< c
    \implies\liminf_{t\da0}L_t/h^{-1}(t/c)\ge 1$.
\end{itemize}
\end{lemma}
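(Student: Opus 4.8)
The proof is purely pathwise and relies on two elementary consequences of the definition $L_s=\inf\{u>0:Y_u>s\}$ of the right-continuous inverse, valid for every realisation of the non-decreasing process $Y$: (1) if $Y_u>s$ then $L_s\le u$ (since $u$ lies in the set over which the infimum is taken); and (2) if $L_s<v$ then $Y_v>s$ (since then some $w\in(0,v)$ satisfies $Y_w>s$, and monotonicity of $Y$ gives $Y_v\ge Y_w>s$). I would also record at the outset that, since $h$ is continuous and increasing with $h(0)=0$, its inverse $h^{-1}$ is continuous and increasing with $h^{-1}(0)=0$, so that $r\da0$ forces $h^{-1}(r)\da0$ and vice versa; this keeps all the perturbed sequences below genuinely tending to $0$.

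For part (a), I argue by contraposition. If $\limsup_{t\da0}Y_t/h(t)>c$, choose a sequence $t_n\da0$ with $Y_{t_n}>c\,h(t_n)$, and set $r_n\coloneqq c\,h(t_n)$, so that $t_n=h^{-1}(r_n/c)$ and $r_n\da0$. Observation (1), applied with $s=r_n$ and $u=t_n$, yields $L_{r_n}\le t_n=h^{-1}(r_n/c)$, hence $L_{r_n}/h^{-1}(r_n/c)\le1$ for all $n$; since $r_n\da0$, this contradicts $\liminf_{t\da0}L_t/h^{-1}(t/c)>1$.

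For part (b), I again argue by contraposition. If $\liminf_{t\da0}L_t/h^{-1}(t/c)<1$, choose a sequence $r_n\da0$ with $L_{r_n}<h^{-1}(r_n/c)$, and set $v_n\coloneqq h^{-1}(r_n/c)$, so that $v_n\da0$ and $h(v_n)=r_n/c$. Observation (2), applied with $s=r_n$ and $v=v_n$, yields $Y_{v_n}>r_n=c\,h(v_n)$, hence $Y_{v_n}/h(v_n)>c$ for all $n$; since $v_n\da0$, this contradicts $\limsup_{t\da0}Y_t/h(t)<c$. The only points that need care are the directions of the inequalities in (1)--(2) and keeping the sequences $r_n,v_n$ strictly positive and convergent to $0$; there is no substantive obstacle.
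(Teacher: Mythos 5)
Your proof is correct and rests on exactly the same elementary inverse relations between $L$ and $Y$ that the paper uses (stated there in the direct form $L_u>t\implies u\ge Y_t\implies L_u\ge t$); your observations (1) and (2) are just their contrapositives, and you run the argument by contradiction along sequences rather than directly. There is no gap.
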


\begin{proof}
The result follows from the implications $L_u>t\implies u\ge Y_t\implies L_u\ge t$ for any $t,u>0$. Indeed, if $\liminf_{u\da0}L_u/h^{-1}(u/c)>1$ then $L_u>h^{-1}(u/c)$ for all sufficiently small $u>0$ implying that $Y_t\le ch(t)$ for all sufficiently small $t>0$ and hence $\limsup_{t\da 0}Y_t/h(t)\le c$. This establishes part (a). Part (b) follows along similar lines.
\end{proof}

A combination of Lemma~\ref{lem:Y_vs_L}, Theorem~\ref{thm:Y_limsup}, Proposition~\ref{prop:Y_limsup} and Remark~\ref{rem:Y_limsup} yield the following corollary.

\begin{corollary}
\label{cor:L_liminf}
Let $h$ be a continuous and increasing function with $h(0)=0$ and $h(1)=1$ such that $\lim_{c\da0}\limsup_{t\da0}h^{-1}(ct)/h^{-1}(t)=0$. Then the following results hold:
\begin{itemize}[leftmargin=2.5em, nosep]
    \item[{\nf(i)}] If $\liminf_{t\da0}L_t/h^{-1}(t/c)> 1$ a.s. for some $c\in(0,1)$ then~\eqref{eq:Pi_large} holds.
    \item[{\nf(ii)}] Suppose~\eqref{eq:Pi_large}--\eqref{eq:Pi_mean} hold, then $\liminf_{t\da0}L_t/h^{-1}(t)=\infty$ a.s.
    \item[{\nf(ii')}] Suppose $h$ is convex and conditions~\eqref{eq:Pi_large} and~\eqref{eq:Pi_var_inv}--\eqref{eq:Pi_mean_inv} hold, then $\liminf_{t\da0}L_t/h^{-1}(t)=\infty$ a.s.
    \item[{\nf(iii)}] If $\Pi(\{(t,x)\,:\,t\in(0,1],\,x\ge ch(t)\})=\infty$ for all $c>0$ then $\liminf_{t\da0}L_t/h^{-1}(t)=0$ a.s.
\end{itemize}
\end{corollary}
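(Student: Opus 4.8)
The plan is to deduce all four parts by feeding the descriptions of the upper fluctuations of $Y$ given by Theorem~\ref{thm:Y_limsup}, Proposition~\ref{prop:Y_limsup} and Remark~\ref{rem:Y_limsup} into the two implications of Lemma~\ref{lem:Y_vs_L}, which convert statements about $\limsup_{t\da0}Y_t/h(t)$ into statements about $\liminf_{t\da0}L_t/h^{-1}(t/c)$ and conversely. The only auxiliary observation needed is the elementary reformulation of the growth hypothesis: the substitution $t\mapsto t/c$ inside the $\limsup$ shows that $\lim_{c\da0}\limsup_{t\da0}h^{-1}(ct)/h^{-1}(t)=0$ is equivalent to $\lim_{c\da0}\liminf_{t\da0}h^{-1}(t/c)/h^{-1}(t)=\infty$, which is precisely what allows us to pass between the normalisations $h^{-1}(t)$ and $h^{-1}(t/c)$ appearing in Lemma~\ref{lem:Y_vs_L}.

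Part (i) is immediate: since $c\in(0,1)$, the hypothesis together with Lemma~\ref{lem:Y_vs_L}(a) gives $\limsup_{t\da0}Y_t/h(t)\le c<1$ a.s., which is condition~\eqref{eq:h_leq_io}, and Theorem~\ref{thm:Y_limsup}(a) then yields~\eqref{eq:Pi_large}.

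For parts (ii) and (ii'), Theorem~\ref{thm:Y_limsup}(b) (respectively Proposition~\ref{prop:Y_limsup}(c), whose convexity requirement on $h$ is precisely the extra hypothesis of (ii')) turns the stated conditions on $\Pi$ into~\eqref{eq:h_limsup}, i.e. $\limsup_{t\da0}Y_t/h(t)=0$ a.s. Hence $\limsup_{t\da0}Y_t/h(t)<c$ a.s. for every $c>0$, and Lemma~\ref{lem:Y_vs_L}(b) gives $\liminf_{t\da0}L_t/h^{-1}(t/c)\ge1$ a.s. Writing $L_t/h^{-1}(t)=\bigl(L_t/h^{-1}(t/c)\bigr)\cdot\bigl(h^{-1}(t/c)/h^{-1}(t)\bigr)$ and bounding the $\liminf$ of the product below by the product of the $\liminf$s, we obtain $\liminf_{t\da0}L_t/h^{-1}(t)\ge\liminf_{t\da0}h^{-1}(t/c)/h^{-1}(t)$ a.s.; as the left-hand side does not depend on $c$, letting $c\da0$ and invoking the reformulated hypothesis gives $\liminf_{t\da0}L_t/h^{-1}(t)=\infty$ a.s.

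For part (iii), Remark~\ref{rem:Y_limsup} yields $\limsup_{t\da0}Y_t/h(t)=\infty$ a.s. Applying the contrapositive of Lemma~\ref{lem:Y_vs_L}(a) to the rescaled function $h/M$ (still continuous, increasing, vanishing at $0$, with inverse $y\mapsto h^{-1}(My)$) shows that $\liminf_{t\da0}L_t/h^{-1}(Mt)\le1$ a.s. for every $M>0$. Decomposing $L_t/h^{-1}(t)=\bigl(L_t/h^{-1}(Mt)\bigr)\cdot\bigl(h^{-1}(Mt)/h^{-1}(t)\bigr)$, choosing a sequence $t_n\da0$ along which the first factor stays bounded (possible since its $\liminf$ is at most $1$), and bounding the second factor for small $M$ by $\limsup_{t\da0}h^{-1}(Mt)/h^{-1}(t)$, which tends to $0$ as $M\da0$ by hypothesis, we conclude $\liminf_{t\da0}L_t/h^{-1}(t)=0$ a.s. Since the whole argument is an assembly of earlier results, no step is a genuine obstacle; the only (minor) points requiring care are the manipulations of $\liminf$ and $\limsup$ of the products above in parts (ii), (ii') and (iii), for which the regular-variation-type decay of $h^{-1}$ is precisely tailored.
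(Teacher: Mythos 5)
Your proposal is correct and follows essentially the same route as the paper, namely assembling Lemma~\ref{lem:Y_vs_L}, Theorem~\ref{thm:Y_limsup}, Proposition~\ref{prop:Y_limsup} and Remark~\ref{rem:Y_limsup}; you merely spell out the $\liminf$/$\limsup$ bookkeeping needed to pass from the normalisation $h^{-1}(t/c)$ to $h^{-1}(t)$, which the paper leaves implicit.
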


To prove Theorem~\ref{thm:Y_limsup} we require the following lemma. For all $t\ge 0$ denote by $\Delta_t\coloneqq Y_t-Y_{t-}$ the jump of $Y$ at time $t$, so that $Y_t=\sum_{u\le t}\Delta_u$ since $Y$ is a pure-jump additive process. We also let $N$ denote the Poisson jump measure of $Y$, given by $N(A)\coloneqq |\{t:(t,\Delta_t)\in A\}|$ for $A\subset[0,\infty)\times (0,\infty)$ and note that its mean measure is $\Pi(\D t,\D x)$. 

\begin{lemma}\label{lem:upper_func_Y}
Let $h$ be continuous and increasing with $h(0)=0$ and $h(1)=1$. Assume \eqref{eq:Pi_large}--\eqref{eq:Pi_mean} hold, then 
$\limsup_{t \da 0}Y_t/h(t)
=\limsup_{t \da 0}Y_{h^{-1}(t)}/t=0$ a.s.
\end{lemma}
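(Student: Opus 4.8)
The plan is to decompose $Y$ on the time interval $(0,1]$ into a ``large-jump'' part and a ``small-jump'' part relative to the moving threshold $x = h(t)$, and to control each separately. Concretely, write $Y_t = B_t + S_t$ where $B_t \coloneqq \sum_{u\le t}\Delta_u\1_{\{\Delta_u \ge h(u)\}}$ collects the jumps lying above the curve and $S_t \coloneqq \sum_{u\le t}\Delta_u\1_{\{\Delta_u < h(u)\}}$ collects those below. Since $\limsup_{t\da 0}Y_t/h(t)=\limsup_{t\da 0}Y_{h^{-1}(t)}/t$ is immediate from the change of variables $t\mapsto h^{-1}(t)$ and the monotonicity/continuity of $h$, it suffices to prove $\limsup_{t\da 0}Y_t/h(t)=0$ a.s., and for that it suffices to show $B_t/h(t)\to 0$ and $S_t/h(t)\to 0$ a.s. as $t\da 0$.

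For the large-jump part: condition~\eqref{eq:Pi_large} says $\Pi(\{(t,x):t\in(0,1],\,x\ge h(t)\})<\infty$, so the Poisson random measure $N$ puts only finitely many points in the region $\{(u,x): u\le 1,\ x\ge h(u)\}$ a.s. Hence there is a (random) $u_0>0$ below which $B$ has no jumps at all, so $B_t = 0$ for all $t\le u_0$ and trivially $B_t/h(t)\to 0$. This is the easy step.

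For the small-jump part: I would work along the dyadic sequence $t=2^{-n}$ and use a first-Borel--Cantelli/maximal-inequality argument, then fill in between dyadic points by monotonicity of $S$ together with $h(2^{-n})/h(2^{-n-1})$ being handled by the continuity of $h$ (note $S$ is nondecreasing). Fix $\ve>0$. On $(2^{-n-1},2^{-n}]$ the increment of $S$ is a sum of independent jumps each at most $h(2^{-n})$; its mean is $\int_{(2^{-n-1},2^{-n}]\times(0,\infty)} x\1_{\{x<h(u)\}}\Pi(\D u,\D x)$. Using~\eqref{eq:Pi_mean} (rewritten via $h^{-1}$, exactly the quantity $2^n\int_{(0,h^{-1}(2^{-n})]\times(0,2^{-n})}x\1_{\{2h(u)>x\}}\Pi(\D u,\D x)\to 0$) one controls $\E[S_{2^{-n}}]/h(2^{-n})\to 0$; and a Chebyshev/Bernstein-type bound using the truncated second moment from~\eqref{eq:Pi_var} (again converted to the $h^{-1}$-side threshold region $\{2h(u)>x\}$) gives summability of $\p(S_{2^{-n}} > \ve\, h(2^{-n}))$ after a dyadic blocking of the $n$'s. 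Borel--Cantelli then yields $S_{2^{-n}}/h(2^{-n})\to 0$ a.s., and monotonicity of $S$ plus $h(2^{-n-1})/h(2^{-n})\to$ bounded (here one may need to pass through the sequence more carefully if $h$ is not doubling, but the statement's hypotheses on $h$ in the ambient results make this routine) upgrades it to $\limsup_{t\da 0}S_t/h(t)=0$ a.s. Since $\ve>0$ is arbitrary this finishes the claim.

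The main obstacle is the bookkeeping in the small-jump estimate: one must carefully match the truncation region $\{x<h(u)\}$ actually appearing in the decomposition of $S$ with the region $\{2h(u)>x\}$ appearing in conditions~\eqref{eq:Pi_var}--\eqref{eq:Pi_mean}, and handle the ``in-between dyadic'' times where a jump of size $h(u)$ with $u\in(2^{-n-1},2^{-n}]$ could be as large as $h(2^{-n})$, i.e. up to a constant multiple of $h$ at the current time — this is exactly why the factor $2$ appears in the indicator $\1_{\{2h(t)>x\}}$. Once the regions are aligned, the probabilistic content is a standard concentration argument for triangular arrays of independent nonnegative summands. I would also note that the reduction $\limsup_{t\da0}Y_t/h(t)=\limsup_{t\da0}Y_{h^{-1}(t)}/t$ requires only that $h$ is a continuous increasing bijection of $[0,1]$ onto $[0,1]$, which holds by hypothesis.
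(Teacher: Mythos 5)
Your large-jump removal via \eqref{eq:Pi_large} and the Borel--Cantelli lemma is exactly the paper's, and your overall ``mean plus truncated second moment'' strategy for the small jumps matches the paper's as well. The gap is in where you run the dyadic blocking. You block in the \emph{original} time variable at $t=2^{-n}$ and then try to fill in between dyadic points by bounding $h(2^{-n})/h(2^{-n-1})$ via ``the continuity of $h$'' and, failing that, by appealing to ``the statement's hypotheses on $h$ in the ambient results.'' Neither rescue works. Continuity of $h$ says nothing about this ratio: for $h(t)=e\cdot e^{-1/t}$ on $(0,1]$ (continuous, increasing, $h(0)=0$, $h(1)=1$) one has $h(2^{-n})/h(2^{-n-1})=e^{2^{n}}\to\infty$. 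And Lemma~\ref{lem:upper_func_Y}, Theorem~\ref{thm:Y_limsup}, and Proposition~\ref{prop:Y_limsup} impose no doubling condition on $h$ at all, so the fill-in step between $2^{-n-1}$ and $2^{-n}$ simply does not close. This also propagates into your mean estimate: you want $\E[S_{2^{-n}}]/h(2^{-n})\to 0$, but \eqref{eq:Pi_mean} supplies a prefactor $2^n$ with an integration region $(0,h^{-1}(2^{-n})]$ in the time slot, i.e.\ it is already phrased on the $h$-time scale, and $2^n$ is not comparable to $1/h(2^{-n})$ without doubling.

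You in fact write down the correct reduction at the very start --- $\limsup_{t\da0}Y_t/h(t)=\limsup_{t\da0}Y_{h^{-1}(t)}/t$ --- and then abandon it. The paper proves the second form. It pushes the Poisson measure $N$ forward through $(u,x)\mapsto(h(u),x)$ to obtain $N_h$ with mean measure $\Pi_h(A\times B)=\Pi(h^{-1}(A),B)$, and runs the dyadic blocking $t\in(2^{-n-1},2^{-n}]$ on the $h$-time scale, where $Y_{h^{-1}(t)}=\int_{(0,t]\times(0,\infty)}x\,N_h(\D u,\D x)$ and the denominator is $t$ itself, so the fill-in factor is just $2$. This is precisely why conditions \eqref{eq:Pi_var} and \eqref{eq:Pi_mean} are written with $h^{-1}$ in the time coordinate and dyadic thresholds in the size coordinate. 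Once you block on the $h$-time scale, your mean and variance computations and the first Borel--Cantelli lemma recover the paper's argument essentially verbatim (the paper finishes by showing $\sum_n\Var(\zeta_n)<\infty$, which is the same second-moment input as your Chebyshev step). Your remark about the factor $2$ in $\1_{\{2h(t)>x\}}$ accounting for in-between dyadic times is correct, but only after the blocking is done on the $h$-time scale.
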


\begin{proof}
For all $n \in \N$, we let $B_n\coloneqq [2^{-n},\infty)$ and set $C_n\coloneqq h^{-1}((2^{-n-1},2^{-n}])\times B_n$. Then we have 
\begin{equation*}
\sum_{n \in \N}\p(N(C_n)\geq 1)
= \sum_{n \in \N}\big(1-e^{-\Pi(C_n)}\big)
\le \sum_{n \in \N} \Pi(C_n), 
\end{equation*}
by the definition of $N$ and the inequality $1-e^{-x}\le x$. Note that $\sum_{n \in \N} \Pi(C_n)<\infty$ by~\eqref{eq:Pi_large}, since 
\[
\sum_{n\in\N} \Pi(C_n)
\le\Pi(\{(t,x)\,:\, t\in[0,1],\, x\ge h(t)\})<\infty.
\]
By the Borel--Cantelli lemma, there exists some $n_0\in\N$ with $N(h^{-1}((2^{-n-1},2^{-n}])\times B_n)=0$ a.s. for all $n \geq n_0$. By the mapping theorem, the random measure $N_h(A\times B)\coloneqq N(h^{-1}(A)\times B)$ for any measurable $A,B\subset[0,\infty)$, is a Poisson random measure with mean measure $\Pi_h(A\times B)\coloneqq\Pi(h^{-1}(A),B)$. Note that $Y_{h^{-1}(t)}
=\int_{(0,h^{-1}(t)]\times (0,\infty)}xN(\D u,\D x)
=\int_{(0,t]\times (0,\infty)}xN_h(\D u,\D x)$ for $t\ge 0$ and, for any $n \geq n_0$ and $t \in (2^{-n-1},2^{-n}]$, we have $|Y_{h^{-1}(t)}/t|\le\zeta_n\coloneqq 2^{n+1}\sum_{m=n}^\infty\xi_m$, where
\begin{equation*}
\xi_m\coloneqq \int_{(2^{-m-1},2^{-m}]\times (0,2^{-m})}
    x N_h(\D u,\D x),
\qquad m\in\N.
\end{equation*}
To complete the proof, it suffices to show that $\zeta_n\da 0$ a.s. as $n\to\infty$. Fubini's theorem yields 
\begin{align*}
2^{-n-1}\E[\zeta_n]
&=\sum_{m=n}^\infty\int_{(2^{-m-1},2^{-m}]\times (0,2^{-m})} x\Pi_h(\D u,\D x)\\
&=\int_{(0,2^{-n}]\times (0,2^{-n})} 
    x\sum_{m=n}^\infty \1_{\{x<2^{-m}\}}\1_{\{u\le 2^{-m}<2u\}} 
        \Pi_h(\D u,\D x)\\
&\le \int_{(0,2^{-n}]\times (0,2^{-n})} 
    x\1_{\{2u>x\}} \Pi_h(\D u,\D x)
=\int_{(0,h^{-1}(2^{-n})]\times (0,2^{-n})} 
    x\1_{\{2h(v)>x\}} \Pi(\D v,\D x).
\end{align*} 
By assumption~\eqref{eq:Pi_mean}, we deduce that $\E[\zeta_n]\da0$ as $n\to\infty$. Similarly, note that
\begin{equation*}
\Var(\zeta_n)
= 4^{n+1}\sum_{m= n}^\infty\int_{(2^{-m-1},2^{-m}]\times (0,2^{-m})} x^2 \Pi_h(\D u,\D x),
\end{equation*} 
and hence, by Fubini's theorem and assumption~\eqref{eq:Pi_var}, we have
\begin{align*}
\sum_{n=1}^\infty\Var(\zeta_n)
&= \sum_{m=1}^\infty\sum_{n=1}^m 4^{n+1}\int_{(2^{-m-1},2^{-m}]\times (0,2^{-m})} x^2 \Pi_h(\D t,\D x)\\
&\le\sum_{m=1}^\infty 4^{m+2}\int_{(2^{-m-1},2^{-m}]\times (0,2^{-m})} x^2 \Pi_h(\D u,\D x)\\
&=\int_{(0,1]\times (0,1)}x^2\sum_{m=1}^\infty 4^{m+2}\1_{\{x<2^{-m}\}}\1_{\{u<2^{-m}<2u\}} \Pi_h(\D u,\D x)\\
&\le 4^2\int_{(0,1]\times(0,1)}\frac{x^2}{u^2}\1_{\{2u>x\}}\Pi_h(\D u,\D x)\\
&=4^2\int_{(0,h^{-1}(1)]\times(0,1)}\frac{x^2}{h(v)^2}\1_{\{2h(v)>x\}}\Pi(\D v,\D x)<\infty.
\end{align*} 
Thus, we find that the sum $\sum_{n=1}^\infty(\zeta_n-\E[\zeta_n])^2$ has finite mean equal to $\sum_{n=1}^\infty\Var(\zeta_n)<\infty$ and is thus finite a.s. Hence, the summands must tend to $0$ a.s. and, since $\E[\zeta_n]\to 0$, we deduce that $\zeta_n \da 0$ a.s. as $n \to \infty$. 
\end{proof}

\begin{proof}[Proof of Theorem~\ref{thm:Y_limsup}] 
It is obvious that~\eqref{eq:h_limsup} implies~\eqref{eq:h_leq_io}. If~\eqref{eq:h_leq_io} holds, then $Y_t<h(t)$ for all sufficiently small $t$. Thus, the path bound $Y_t\ge\Delta_t$ implies $\p(N(\{(t,x)\,:\,t \in [0,1],\,x>h(t)\})<\infty)=1$ and hence~\eqref{eq:Pi_large}. By Lemma~\ref{lem:upper_func_Y},
conditions~\eqref{eq:Pi_large}--\eqref{eq:Pi_mean} imply~\eqref{eq:h_limsup}, so it remains to show that~\eqref{eq:Pi_mean_var} implies~\eqref{eq:Pi_var} and~\eqref{eq:Pi_mean}.

It is easy to see that~\eqref{eq:Pi_mean_var} implies~\eqref{eq:Pi_var}. Moreover, if~\eqref{eq:Pi_mean_var} holds, then 
\begin{multline*}
2^n\int_{(0,h^{-1}(2^{-n})]\times (0,2^{-n})} x\1_{\{2h(t)>x\}} \Pi(\D t,\D x)\\
\le \int_{(0,h^{-1}(1)]\times (0,1)} \frac{x}{h(t)}\1_{\{2h(t)>x\}}
\1_{{(0,h^{-1}(2^{-n})]\times (0,2^{-n})}}(t,x) 
\Pi(\D t,\D x),
\end{multline*}
where the upper bound is finite for all $n\in\N$ and tends to $0$ as $n\to\infty$ by the monotone convergence theorem, implying~\eqref{eq:Pi_mean}. 
\end{proof}

\begin{proof}[Proof of Proposition~\ref{prop:Y_limsup}]
Since $h^{-1}$ is concave with $h^{-1}(0)=0$, then $x\mapsto h^{-1}(x)/x$ is decreasing, so the condition $h(t)>x/2$ implies $(x/2)/h(t)\le h^{-1}(x/2)/t\le h^{-1}(x)/t$. The inequality $h^{-1}(x)/x\le h^{-1}(x/2)/(x/2)$ implies that $\{(t,x)\,:\,2h(t)>x\}\subset\{(t,x)\,:\,2t>h^{-1}(x)\}$, proving the first claim: \eqref{eq:Pi_var_inv} implies~\eqref{eq:Pi_var}. 

Since $h^{-1}$ is concave with $h^{-1}(0)=0$, it is subadditive, implying 
\[
\zeta_t \coloneqq  \sum_{u\le t}h^{-1}(\Delta_u)\ge h^{-1}(Y_t).
\]
Since $\limsup_{t\da 0}\zeta_t/t\le c$ implies $\limsup_{t\da 0}Y_t/h(ct)\le 1$ for $c>0$ and $h$ is a convex function, it suffices to show that $\limsup_{t\da 0}\zeta_t/t=0$ a.s. Note that $\zeta$ is an additive process with jump measure $\Pi(\D t, h(\D x))$. Applying Theorem~\ref{thm:Y_limsup} to $\zeta$ with the identity function yields the result, completing the proof. 
\end{proof}

\begin{remark}\label{rem:stationary_increments}
We now show that, when the increments of $Y$ are stationary (making $Y$ a subordinator), Theorem~\ref{thm:Y_limsup} gives a complete characterisation of the upper functions of $Y$, recovering~\cite[Thm~1]{MR210190} (see also~\cite[Prop.~4.4]{MR1746300}). This is done in two steps.

Suppose $h$ is convex and $Y$ has stationary increments with mean jump measure $\Pi(\D t, \D x)=\Pi((0,1],\D x)\D t$. Then $h^{-1}$ is concave and the additive process $\wt Y_t\coloneqq \sum_{s\le t}h^{-1}(\Delta_s)\ge h^{-1}(Y_t)$ has mean jump measure $\Pi(\D t,h(\D x))$, making it a subordinator. Theorem~\ref{thm:Y_limsup} applied to $\wt Y$ and the identity function makes all conditions~\eqref{eq:Pi_large}--\eqref{eq:Pi_mean}  equivalent to $\int_{(0,1)}h^{-1}(x)\Pi((0,1],\D x)<\infty$ and therefore, by Theorem~\ref{thm:Y_limsup}, also equivalent to the condition $\limsup_{t\da0}\wt Y_t/t=0$ a.s. 

Note that condition~\eqref{eq:Pi_large} for $\wt Y$ and the identity function coincides with condition~\eqref{eq:Pi_large} for $Y$ and $h$. This equivalence, together with the fact that the limit $\limsup_{t\da 0}\wt Y_t/t=0$ implies $\limsup_{t\da 0}Y_t/h(t)=0$, shows that both limits are either $0$ a.s. or positive a.s. jointly. Thus, $\limsup_{t\da0}Y_t/h(t)=0$ a.s. if and only if $\int_{(0,1)}h^{-1}(x)\Pi((0,1],\D x)<\infty$ and, if the latter condition fails, then $\limsup_{t\da0} Y_t/h(t)=\infty$ a.s. by Remark~\ref{rem:Y_limsup}. This is precisely the criterion given in~\cite[Thm~1]{MR210190} (see also~\cite[Prop.~4.4]{MR1746300}). 
\end{remark}

Remark~\ref{rem:stationary_increments} shows that condition~\eqref{eq:Pi_large} perfectly describes the upper fluctuations of $Y$ when $Y$ has stationary increments, making  conditions~\eqref{eq:Pi_var} \&~\eqref{eq:Pi_mean} appear superfluous. These conditions are, however, not superfluous since~\eqref{eq:Pi_large} by itself cannot fully characterise the upper fluctuations of $Y$, as the following example shows. 

\begin{example}\label{ex:breakequivalence}
Let $\Pi(\D t,\D x) = \sum_{n\in\N}n^{-1}2^n\delta_{(2^{-n},2^{-n}/n)}(\D t,\D x)$, where $\delta_x$ denotes the Dirac measure at $x$, and consider the corresponding additive process $Y$ (whose existence is ensured by~\cite[Thm~15.4]{MR1876169}). Since $\p(\xi\ge\mu)\ge 1/5$ for every Poisson random variable $\xi$ with mean $\mu\ge 2$~\cite[Eq.~(6)]{pelekis2017lower}, we get $\sum_{n\in\N}\p(N(\{(2^{-n},2^{-n}/n)\})\ge 2^n/n)=\infty$. The second Borel--Cantelli lemma then shows that $\Delta_{2^{-n}}\ge 1/n^2$ i.o. Thus, $Y_{2^{-n}}/2^{-n}\ge 2^{n}\Delta_{2^{-n}}\ge 2^n/n^2$ i.o., implying $\limsup_{t\da0}Y_t/t=\infty$ a.s. even when condition~\eqref{eq:Pi_large} holds. In fact, $\Pi(\{(t,x)\,:\,t\in(0,1],\,x\ge ct\})<\infty$ for all $c>0$. 
\end{example}

\section{The vertex time process and the proofs of the results in Section~\ref{sec:small-time-derivative}}
\label{sec:proofs}

We first recall basic facts about the vertex time process $\tau=(\tau_s)_{s \in \R}$. Fix a deterministic time horizon $T>0$, let $C$ be the convex minorant of $X$ on $[0,T]$ with right-derivative $C'$ and recall the definition $\tau_s=\inf\{t>0:C'_t>s\}$ for any slope $s\in\R$. By the convexity of $C$, the right-derivative $C'$ is non-decreasing and right-continuous, making $\tau$ a non-decreasing right-continuous process with $\lim_{s\to-\infty}\tau_s=0$ and $\lim_{s\to\infty}\tau_s=T$. Intuitively put, the process $\tau$ finds the times in $[0,T]$ at which the slopes increase as we advance through the graph of the convex minorant $t\mapsto  C_t$ chronologically. We remark that the vertex time process can be constructed directly from $X$ without any reference to the convex minorant $C$, as follows (cf.~\cite[Thm~11.1.2]{MR1739699}): for each slope $s\in\R$ and time epoch $t\ge 0$, define  
$X^{(s)}_t\coloneqq X_t-st$, 
$\un X^{(s)}_t\coloneqq \inf_{u\in[0,t]}X_u^{(s)}$
and note
$\tau_s= \sup\big\{t\in[0,T]\,:\, X_{t-}^{(s)}\wedge X_t^{(s)}=\un X_T^{(s)}\big\}$, where $X_{u-}^{(s)}\coloneqq \lim_{v\uparrow u}X_{u-}^{(s)}$ for $u>0$ and $X_{0-}^{(s)}\coloneqq X_{0}^{(s)}=0$. Put differently, subtracting a constant drift~$s$ from the L\'evy process $X$ ``rotates'' the convex hull so that the vertex time $\tau_s$ becomes the time the minimum of $X^{(s)}$ during the time interval $[0,T]$ is attained.

\subsection{The vertex time process over exponential times}
\label{subsec:tau_exp}
Fix any $\lambda>0$ and let $E$ be an independent exponential random variable with unit mean. Let $\wh C\coloneqq(\wh C_t)_{t \in [0,E/\lambda]}$ be the convex minorant of $X$ over the exponential time-horizon $[0,E/\lambda]$ and denote by $\wh\tau$ the right-continuous inverse of $\wh C'$, i.e. $\wh\tau_s\coloneqq \inf\{u\in [0,E/\lambda]:\wh C'_u>s\}$ for $s\in\R$. Hence, in the remainder of the paper, the processes with (resp. without) a `hat' will refer to the processes whose definition is based on the path of $X$ on $[0,E/\lambda]$ (resp. $[0,T]$), where $E$ is an exponential random variable with unit mean independent of $X$ and $T>0$ is fixed and deterministic.

It is more convenient to consider the vertex time processes over an independent exponential time horizon rather than the fixed time horizon $T$, as this does not affect the small-time behaviour of the process (see Corollary~\ref{cor:trivial} below), while making its law more tractable. Moreover, as we will see, to analyse the fluctuations of $\wh C'$ over short intervals, it suffices to study those of $\wh\tau$. By~\cite[Cor.~3.2]{fluctuation_levy}, the process $\wh\tau$ has independent but non-stationary increments and its Laplace exponent is given by
\begin{equation}
\label{eq:cf_tau}
\E[e^{-u \wh \tau_s}]
=e^{-\Phi_s(u)}, \quad \text{ where } \quad \Phi_s(u)\coloneqq \int_0^\infty (1-e^{-u t})e^{-\lambda t}\p(X_t\le st)\frac{\D t}{t},
\end{equation}
for all $u \ge 0$ and $s\in\R$. The following lemma states that, after a vertex time, the convex minorants $C$ and $\wh C$ must agree for a positive amount of time, see Figure~\ref{fig:CM_agree} for a pictorial description.


\begin{lemma}
\label{lem:CM_agree}
For any $s\in\mL^+(\mS)$,  on the event $\{\tau_s<E/\lambda\le T\}$,  we have $\tau_s=\wh\tau_s$ and the convex minorants $C$ and $\wh C$ agree on and interval $[0,\tau_s+m]$ for a random $m>0$. If $X$ is of infinite variation, the functions $C$ and $\wh C$ agree on an interval $[0,m]$ for a random variable $m$
satsifying $0<m\le\min\{T,E/\lambda\}$ a.s.
\end{lemma}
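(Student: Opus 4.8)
The plan is to deduce the statement from two ingredients. The first is the pathwise description of the vertex times recalled above, $\tau_s=\sup\{t\in[0,T]:X^{(s)}_{t-}\wedge X^{(s)}_t=\un X^{(s)}_T\}$, together with its analogue for $\wh\tau_s$ in which $[0,T]$ is replaced by $[0,E/\lambda]$. The second is the \emph{splitting property} of convex minorants: if $v$ is an interior contact point of the convex minorant of $X$ on an interval $[0,b]$ — equivalently, a vertex time, so that the minorant at $v$ equals $X_v\wedge X_{v-}$ — then the restriction of that minorant to $[0,v]$ is the convex minorant of $X$ on $[0,v]$ (see~\cite[Sec.~2.3]{fluctuation_levy}); recall also that the contact points of a convex minorant are precisely the range of its vertex time process.

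First I would show that $\wh\tau_s=\tau_s$ on the event $\{\tau_s<E/\lambda\le T\}$: since $\tau_s\in[0,E/\lambda]$ attains $\un X^{(s)}_T$, while $\un X^{(s)}_{E/\lambda}\ge\un X^{(s)}_T$ and every $t\in[0,E/\lambda]$ satisfies $X^{(s)}_t\wedge X^{(s)}_{t-}\ge\un X^{(s)}_{E/\lambda}$, the two infima agree and $\tau_s$ attains the infimum over $[0,E/\lambda]$ as well; as $[0,E/\lambda]\subseteq[0,T]$, the defining suprema then coincide, giving $\wh\tau_s=\tau_s$.

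Next, because $s\in\mL^+(\mS)$, \cite[Thm~1.1]{SmoothCM} gives that $C'$ is non-constant on $[\tau_s,\tau_s+\ve)$ for every $\ve>0$, and since $C$ is piecewise linear this forces vertex times of $C$ to accumulate at $\tau_s$ from the right; on $\{\tau_s<E/\lambda\le T\}$ we may therefore fix a vertex time $v$ of $C$ with $\tau_s<v<E/\lambda$. The key step is to transfer $v$ to $\wh C$: since $[0,E/\lambda]\subseteq[0,T]$ we have $\wh C\ge C$ on $[0,E/\lambda]$, and by continuity of convex functions on open intervals $\wh C_v\le X_v\wedge X_{v-}$, which equals $C_v$ because $v$ is a vertex time of $C$; combined with $\wh C_v\ge C_v$ this forces $\wh C_v=X_v\wedge X_{v-}$, i.e.\ $v$ is also a contact point of $\wh C$. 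Applying the splitting property at $v$ to both minorants shows that $C|_{[0,v]}$ and $\wh C|_{[0,v]}$ each coincide with the convex minorant of $X$ on $[0,v]$, so $C=\wh C$ on $[0,v]=[0,\tau_s+m]$ with $m\coloneqq v-\tau_s>0$. Together with the previous paragraph this proves the first assertion. The infinite variation case runs along the same lines but without reference to $s$: there $C'_{0+}=-\infty$, so the convex minorant over the larger of the two horizons $T$ and $E/\lambda$ has vertex times accumulating at $0$; fixing such a $v\in(0,\min\{T,E/\lambda\})$, the interior-contact-point comparison above (with the roles of $C$ and $\wh C$ assigned according to which horizon is larger) and the splitting property give $C=\wh C$ on $[0,v]$, so $m\coloneqq v$ satisfies $0<m\le\min\{T,E/\lambda\}$ a.s.

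The step I expect to be the main obstacle is the comparison at $v$: the content there is that an interior contact point of the minorant over the larger horizon is automatically a contact point of the minorant over the smaller one, so that shrinking the horizon cannot insert a new vertex below $v$; this is exactly what lets the splitting property glue the two minorants on all of $[0,v]$. The hypothesis $s\in\mL^+(\mS)$ is used only to produce a vertex $v$ strictly between $\tau_s$ and $E/\lambda$: if instead $C$ were linear on $[\tau_s,E/\lambda]$, then $\wh C$ would in general lie strictly above $C$ on $(\tau_s,E/\lambda)$ and no positive $m$ could be found.
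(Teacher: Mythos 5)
Your proof is correct and takes essentially the same route as the paper's: both arguments locate a vertex time $v$ of the larger-horizon convex minorant strictly between $\tau_s$ (resp.\ $0$) and $\min\{T,E/\lambda\}$, using the accumulation of vertex times implied by $s\in\mL^+(\mS)$ (resp.\ infinite variation), and then invoke the maximality characterisation of the convex minorant to conclude the two minorants agree on $[0,v]$. You merely make explicit the intermediate step — that an interior contact point of the larger-horizon minorant is automatically a contact point of the smaller-horizon one — and the splitting property, which the paper compresses into a single terse sentence.
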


Since the L\'evy process $X$ and the exponential time $E$ are independent, 
$\p(\tau_s<E/\lambda\le T)>0$.

\begin{proof}
The proof follows directly from the definition of the convex minorant of $f$ as the greatest convex function dominated by the path of $f$ over the corresponding interval. Let $f$ be a measurable function on $[0,t]$ with piecewise linear convex minorant $M^{(t)}$. Then, for any vertex time $v\in(0,t)$ of $M^{(t)}$ and any $u\in(v,t]$, the convex minorant $M^{(u)}$ of $f$ on $[0,u]$ equals $M^{(t)}$ over the interval $[0,v]$. The result then follows since the condition $s\in\mL^+(\mS)$ (resp. $X$ has infinite variation) implies that there are infinitely many vertex times immediately after $\tau_s$ (resp. $0$).
\end{proof}

\begin{figure}
    \centering
    \includegraphics[width=0.65\textwidth]{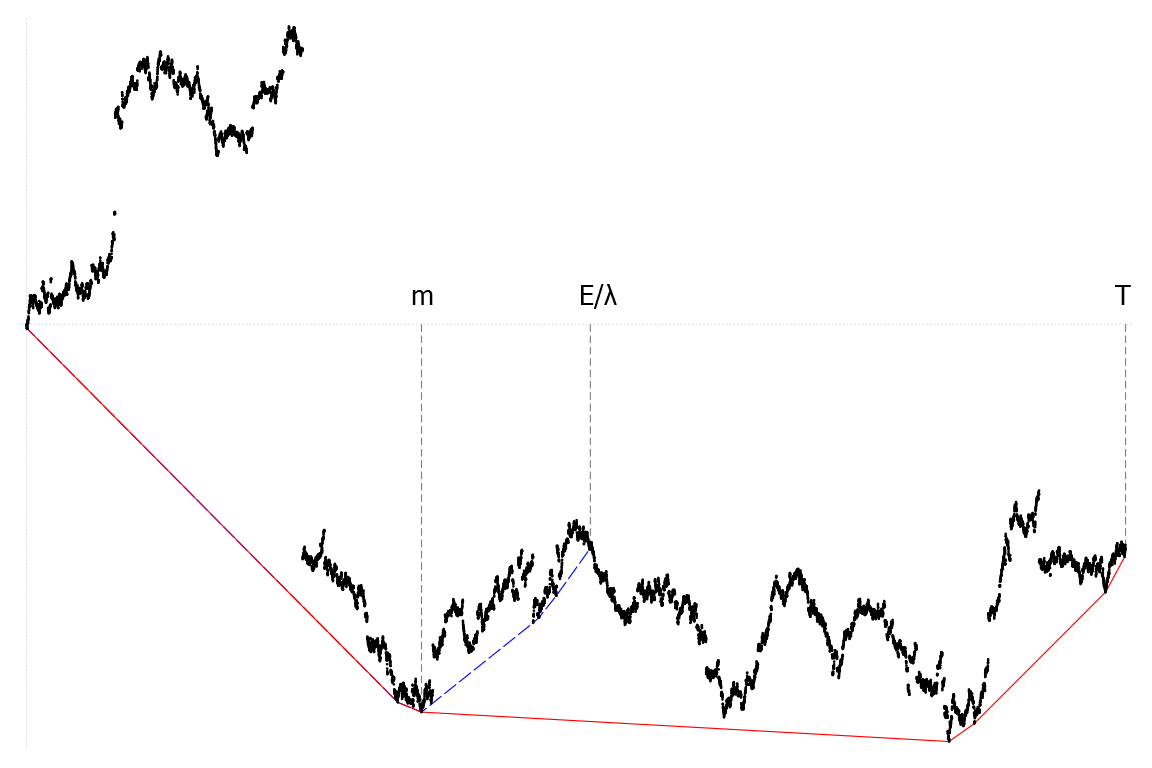}
    \caption{The picture shows a path of $X$ (black) and its convex minorants $C$ (red) on $[0,T]$ and $\wh C$ (blue) on $[0,E/\lambda]$. Both convex minorants agree until time $m$, after which they may behave very differently.}
    \label{fig:CM_agree}
\end{figure}

The following result shows that local properties of $C$ agree with those of $\wh C$. Multiple extensions are possible, but we opt for the following version as it is simple and sufficient for our purpose.

\begin{corollary}\label{cor:trivial}
Fix any measurable function $f:(0,\infty) \to (0,\infty)$.\\
{\nf{(a)}} If $s\in\mL^+(\mS)$, then the following limits are a.s. constants on $[0,\infty]$:
\[
\limsup_{t\da 0}\frac{C'_{t+\tau_s}-s}{f(t)}=\limsup_{t\da 0}\frac{\wh C'_{t+\wh \tau_s}-s}{f(t)}
\quad \text{and}\quad
\liminf_{t\da 0}\frac{C'_{t +\tau_s}-s}{f(t)}=\liminf_{t\da 0}\frac{\wh C'_{t+\wh \tau_s}-s}{f(t)}.
\]
{\nf{(b)}} If $X$ is of infinite variation, then the following limits are a.s. constants on $[0,\infty]$:
\[
\limsup_{t\da 0}C'_t/f(t)=\limsup_{t\da 0}\wh C'_t/f(t)
\qquad\text{and}\qquad
\liminf_{t\da 0}C'_t/f(t)=\liminf_{t\da 0}\wh C'_t/f(t).
\]
\end{corollary}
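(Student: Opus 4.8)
The plan is to deduce Corollary~\ref{cor:trivial} as a direct consequence of Lemma~\ref{lem:CM_agree}, combined with the $0$--$1$ law that makes the limits constant. I would first address the claim that the four limits are a.s. constant in $[0,\infty]$, and then the claim that the hatted and unhatted versions coincide.

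\textbf{Constancy.} For part (b), the limits $\limsup_{t\da0}C'_t/f(t)$ and $\liminf_{t\da0}C'_t/f(t)$ are measurable with respect to the germ $\sigma$-algebra $\bigcap_{\ve>0}\sigma(X_u:u\le\ve)$ of $X$ at $0$: indeed, for infinite variation $X$, the convex minorant $C$ on $[0,\ve]$ agrees with $C$ on $[0,T]$ on an interval $[0,m]$ with $m>0$ a.s. (by the last assertion of Lemma~\ref{lem:CM_agree}, applied with $T$ replaced by $\ve$), so $C'_t$ for $t$ near $0$ is determined by $X|_{[0,\ve]}$ up to an a.s. positive time. Hence each of these limits equals an a.s. constant in $[0,\infty]$ by Blumenthal's $0$--$1$ law for L\'evy processes. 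For part (a), the argument is the same after recentring: replacing $X$ by $X^{(s)}=(X_t-st)_{t\ge0}$ turns $\tau_s$ into the time $\tau_0$ of the overall minimum, and the quantities $\limsup_{t\da0}(C'_{t+\tau_s}-s)/f(t)$ and $\liminf_{t\da0}(C'_{t+\tau_s}-s)/f(t)$ are measurable with respect to the post-minimum $\sigma$-algebra; constancy then follows from the independence and strong-Markov-type decomposition of the path at its minimum (see~\cite[Sec.~3]{fluctuation_levy}), or more simply from the fact that $s\in\mL^+(\mS)$ forces infinitely many vertex times immediately after $\tau_s$, so the germ at $\tau_s$ again determines these limits up to an a.s.\ positive time.

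\textbf{Agreement of hatted and unhatted limits.} Fix $\lambda>0$ and an independent unit-mean exponential $E$. By Lemma~\ref{lem:CM_agree}, on the event $A_s\coloneqq\{\tau_s<E/\lambda\le T\}$ we have $\tau_s=\wh\tau_s$ and $C\equiv\wh C$ on $[0,\tau_s+m]$ for some random $m>0$; consequently $C'_{t+\tau_s}=\wh C'_{t+\wh\tau_s}$ for all $t\in[0,m)$, and therefore the $\limsup$ and $\liminf$ as $t\da0$ of $(C'_{t+\tau_s}-s)/f(t)$ and of $(\wh C'_{t+\wh\tau_s}-s)/f(t)$ coincide on $A_s$. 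Since $X\perp E$, we have $\p(A_s)>0$ (as noted in the line following Lemma~\ref{lem:CM_agree}). The unhatted limits are a.s.\ constant in $[0,\infty]$, say equal to $\ell$, by the first part; and the hatted limits are also a.s.\ constant, say equal to $\wh\ell$ --- this follows by the identical germ-at-$\tau_s$ / $0$--$1$-law argument applied to the convex minorant over the exponential horizon, or alternatively by conditioning on $E$ and using that for a.e.\ fixed horizon the limit is the deterministic constant $\ell$ (the value being horizon-independent by the same Lemma~\ref{lem:CM_agree} reasoning). On $A_s$, which has positive probability, the two constants must agree, so $\ell=\wh\ell$, which is the desired identity. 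Part (b) is handled identically, using instead the event $\{m>0\}$ (which holds a.s.\ when $X$ has infinite variation) on which $C\equiv\wh C$ near $0$, so no positive-probability restriction is even needed there: $C'_t=\wh C'_t$ for all small $t>0$ a.s.

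\textbf{Main obstacle.} The routine part is the pathwise agreement of the derivatives near the vertex time, which is immediate from Lemma~\ref{lem:CM_agree}. The point requiring slight care is justifying that the hatted limits are themselves a.s.\ constant (so that equality on a positive-probability event upgrades to almost-sure equality): one must either invoke a $0$--$1$ law in the exponential-horizon setting or argue by conditioning on $E$ and transferring the already-established constancy of the fixed-horizon limits, using that the constant does not depend on the horizon. I would use the conditioning argument, as it reuses part (a)/(b) for fixed $T$ verbatim and avoids setting up a separate $0$--$1$ law.
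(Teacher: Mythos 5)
Your transfer step (using the positive-probability event $\{\tau_s<E/\lambda\le T\}$ from Lemma~\ref{lem:CM_agree} to upgrade equality on a set of positive measure to a.s.\ equality) is exactly the paper's, and your part~(b) constancy argument — Blumenthal's $0$--$1$ law applied to $X$ at time $0$, using that $C'$ near $0$ is determined by $X|_{[0,\ve]}$ for every $\ve>0$ — is a valid and arguably more elementary route than the paper's for that half of the statement. The gap is in your part~(a) constancy argument, and the overall logical direction is inverted from the paper's.

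The paper first establishes constancy of the \emph{hatted} limit and only then transfers to the fixed horizon. This ordering is not cosmetic: for the exponential horizon, $(\wh\tau_{u+s}-\wh\tau_s)_{u\ge 0}$ is an additive process with independent increments started at $0$, so Blumenthal's $0$--$1$ law for additive processes (Kallenberg, Cor.~19.18) applies directly at the \emph{deterministic} time $u=0$ and yields triviality of the germ $\sigma$-algebra, from which constancy of $\liminf_{t\da 0} L_t/f(t)$ (with $L$ the right-inverse of this additive process) is immediate. You instead try to establish constancy for the \emph{unhatted} limit first. For part~(a), after recentring to $X^{(s)}$, you need a $0$--$1$ law for the germ $\sigma$-algebra at $\tau_s$, the time of the overall minimum of $X^{(s)}$ over $[0,T]$. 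This is a \emph{random} time that is not a stopping time, so Blumenthal's $0$--$1$ law does not apply there; a zero--one law at the minimum of a L\'evy process over a fixed horizon is a genuinely non-elementary fact about the L\'evy meander, and neither of your two justifications (``independence and strong-Markov-type decomposition of the path at its minimum'' or ``the germ at $\tau_s$ determines these limits up to an a.s.\ positive time'') actually supplies it: the Millar/splitting decomposition gives conditional independence of pre- and post-minimum pieces given $\tau_s$, not triviality of the post-minimum germ, and the second observation only records measurability with respect to that germ, not its triviality. This is precisely the step the paper's proof is designed to sidestep by passing first to the exponential horizon where the independent-increment structure of $\wh\tau$ does the work. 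Since your subsequent arguments (conditioning on $E$, horizon-independence) all hinge on step~1, the gap propagates.

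Concretely, to repair the proposal without changing its spirit for part~(a), you should swap the order: apply Blumenthal's $0$--$1$ law for additive processes to $(\wh\tau_{u+s}-\wh\tau_s)_{u\ge 0}$ at $u=0$ to get constancy of the hatted limit (this uses that $\wh C'_{t+\wh\tau_s}-s$ is the right-inverse of this additive process evaluated at $t$, so the small-$t$ limits are germ-measurable), note that by independence of increments this conclusion survives conditioning on $\wh\tau_s$, and then transfer to the unhatted limit on the event $\{\tau_s<E/\lambda\le T\}$ exactly as you do.
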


\begin{proof}
We will prove part (a) for $\liminf$, with the remaining proofs being analogous.
First note that the assumption $s\in\mL^+(\mS)$ implies that  $(\tau_{u+s}-\tau_s)_{u\ge 0}$ and the additive processes $(\wh\tau_{u+s}-\wh\tau_s)_{u\ge 0}$ have infinite activity as $u\da 0$. Thus, applying Blumenthal's 0--1 law~\cite[Cor.~19.18]{MR1876169} to $(\wh\tau_{u+s}-\wh\tau_s)_{u\ge 0}$ (and using the fact that $\wh C'_{\wh\tau_s}=s$ a.s.), implies that $\liminf_{t\da 0}(\wh C'_{t+\wh\tau_s}-s)/f(t)$ is a.s. equal to some constant $\mu$ in $[0,\infty]$. Moreover, by the independence of the increments of $\wh\tau_s$, this limit holds even when conditioning on the value of $\wh\tau_s$. Recall further that $\wh\tau_s=\tau_s$ on the event $\{\tau_s<E/\lambda\le T\}$ by Lemma~\ref{lem:CM_agree}.
By Lemma~\ref{lem:CM_agree} and the independence of $E$ and $X$, we a.s. have
\begin{align*}
0<\p(\tau_s<E/\lambda\le T\,|\,\tau_s)
&=\p\Big(\liminf_{t\da 0}(\wh C'_{t+\wh\tau_s}-s)/f(t)=\mu,
    \,\tau_s<E/\lambda\le T\,\Big|\,\tau_s\Big)\\
&=\p\Big(\liminf_{t\da 0}(C'_{t+\tau_s}-s)/f(t)=\mu,
    \,\tau_s<E/\lambda\le T\,\Big|\,\tau_s\Big)\\
&=\p\Big(\liminf_{t\da 0}(C'_{t+\tau_s}-s)/f(t)=\mu\,\Big|\,\tau_s\Big)
    \p(\tau_s<E/\lambda\le T\,|\,\tau_s),
\end{align*}
implying that $\liminf_{t\da 0}(C'_{t+\tau_s}-s)/f(t)=\mu$ a.s.
\end{proof}

By virtue of Corollary~\ref{cor:trivial} it suffices to prove all the results in Section~\ref{sec:small-time-derivative} for $\wh C$ instead of $C$. This allows us to use the independent increment structure of the right inverse $\wh\tau$ of the right-derivative $\wh C'$. 

\begin{example}[Cauchy process]
\label{ex:Cauchy}
If $X$ is a Cauchy process, then the Laplace exponent of $\wh\tau_u$ factorises $\Phi_u(w)=\p(X_1\le u)\int_0^\infty(1-e^{-wt})e^{-\lambda t}t^{-1}\D t$ for any $u\in\R$ and $w\ge 0$. This implies that $\wh\tau$ has the same law as a gamma subordinator time-changed by the distribution function $u\mapsto\p(X_1\le u)=\frac{1}{2}+\frac{1}{\pi}\arctan(cu+\mu)$ for some $c>0$ and $\mu=\tan(\pi(\frac{1}{2}-\rho))$. This result can be used as an alternative to~\cite[Thm~2]{MR1747095}, in conjunction with classical results on the fluctuations of a gamma process (see, e.g.~\cite[Ch.~4]{MR1746300}), to establish~\cite[Cor.~3]{MR1747095} and all the other results in~\cite{MR1747095}. 
\end{example}

The proofs of the results in Section~\ref{sec:small-time-derivative} are based on the results of Section~\ref{sec:additive}: we will construct a non-decreasing additive process $Y=(Y_t)_{t\ge 0}$, started at $0$, in terms of $\wh\tau$ and apply the results in Section~\ref{sec:additive} to $Y$ and its inverse $L=(L_u)_{u\ge 0}$. These proofs are given in the following subsections.

\subsection{Upper and lower functions at time \texorpdfstring{$\tau_s$}{tau} - proofs}
\label{subsec:proofs_c'_tau_s}

Let $s\in\mL^+(\mS)$. Fix any $\lambda>0$ and let $Y_u\coloneqq\wh\tau_{u+s}-\wh \tau_s$, $u\ge 0$. Then the right-inverse $L_u\coloneqq\inf\{t>0\,:\,Y_t>u\}$ of $Y$ equals $L_u=\wh C'_{u+\wh\tau_s}-s$ for $u\ge 0$. Note that $Y$ has independent increments and~\eqref{eq:cf_tau} implies
\begin{equation}
\label{eq:psi_defn_post_min}
\Psi_u(w)
\coloneqq-\log \E[e^{-wY_u}]
=\int_0^\infty (1-e^{-wt})\Pi((0,u],\D t),
\quad\text{for all $w,u \ge 0$},
\end{equation}
where $\Pi(\D u,\D t)=e^{-\lambda t}\p((X_t-st)/t \in \D u)t^{-1}\D t$ is the mean jump measure of $Y$. 

\begin{proof}[Proof of Theorem~\ref{thm:post-min-lower}]
Since $s\in\mL^+(\mS)$, all three parts of the result follow from a direct application of Proposition~\ref{prop:Y_limsup} and Corollary~\ref{cor:L_liminf} to the definitions of $Y$ and $L$ above.
\end{proof}

To prove Theorem~\ref{thm:upper_fun_C'_post_min}, we require the following two lemmas. The first lemma establishes some general regularity for the densities of $X_t$ as a function of $t$ and the second lemma provides a strong asymptotic control on the function $\Psi_s(u)$ as $s\da 0$ and $u\to\infty$. Recall that, when $X$ is of finite variation, $\gamma_0=\lim_{t\da 0}X_t/t$ denotes the natural drift of $X$. 

\begin{lemma}\label{lem:generalized_Picard}
Let $X \in \mZ_{\alpha,\rho}$ for some $\alpha \in (0,1)$ and $\rho \in (0,1]$ and denote by $g$ its normalising function.\\ 
{\nf{(a)}} Define $Q_t\coloneqq (X_t-\gamma_0t)/g(t)$, then $Q_t$ has an infinitely differentiable density $p_t$ such that $p_t$ and each of its derivatives $p_t^{(k)}$ are uniformly bounded: $\sup_{t\in(0,1]}\sup_{x\in\R}|p_t^{(k)}(x)|<\infty$ for any $k\in\N\cup\{0\}$.\\
{\nf{(b)}} Define $\wt Q_t\coloneqq X_t/\sqrt{t}$, then $\wt Q_t$ has an infinitely differentiable density $\tilde p_t$ such that $\tilde p_t$ and each of its derivatives $\tilde p_t^{(k)}$ are uniformly bounded: $\sup_{t\in[1,\infty)}\sup_{x\in\R}|\tilde p_t^{(k)}(x)|<\infty$ for any $k\in\N\cup\{0\}$.
\end{lemma}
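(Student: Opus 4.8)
The plan is to deduce both parts from the general density estimates of Picard~\cite{picard_1997}, applied not to $X$ itself but to its rescaled versions, after verifying that the small-jump activity hypotheses of those estimates hold uniformly along the relevant time scale. The key observation is that membership in $\mZ_{\alpha,\rho}$ with $\alpha\in(0,1)$ means $X$ has finite variation with natural drift $\gamma_0$, and the centred process $X-\gamma_0\mathrm{Id}$ has L\'evy measure $\nu$ which is regularly varying at $0$ with index $-\alpha$; in particular, writing $\ov\sigma^2_\pm$ and $\ov\nu$ for the truncated second moments and tail of $\nu$ as in Subsection~\ref{subsec:simp_suff_cond_tau_s}, one has $\ov\sigma^2(\ve)\approx \ve^2\ov\nu(\ve)$ and $\ov\nu(\ve)$ regularly varying of index $-\alpha$, so that the normalising function satisfies $g(t)^{-2}\ov\sigma^2(g(t))\approx 1/t$ as $t\da 0$ and, more precisely, $\liminf_{\ve\da0}\ve^{\beta-2}\ov\sigma^2(\ve)>0$ for every $\beta<\alpha$ (and for $\beta=\alpha$ when $\varpi=t^{-1/\alpha}g$ is bounded below near $0$).

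For part (a): first I would reduce to $\gamma_0=0$ by the translation argument of Remark~\ref{rem:modify_nu}(b), so $Q_t=X_t/g(t)$. The process $Q_t$ is, for each fixed $t$, an infinitely divisible random variable whose L\'evy measure is the pushforward of $\nu$ (restricted to an appropriate range after absorbing the drift correction) under $x\mapsto x/g(t)$. By the scaling relation for $g$ this pushed-forward L\'evy measure has truncated second moment on $(-1,1)$ bounded below by a constant uniform in $t\in(0,1]$, precisely because $g(t)^{-2}\ov\sigma^2(g(t))t$ stays bounded away from $0$. Then~\cite[Thm~3.1]{picard_1997} gives that $Q_t$ has a density $p_t$ with $\sup_x p_t(x)$ bounded uniformly in $t$; to control the derivatives I would instead use the smoothness statement of~\cite[Thm~3.1]{picard_1997} (or iterate the argument via the elementary bound $\|p_t^{(k)}\|_\infty\le C_k\int|\xi|^k|\widehat{p_t}(\xi)|\D\xi$ together with the exponential-in-$|\xi|^\alpha$ decay of $|\widehat{p_t}(\xi)|=|\widehat{Q_t}(\xi)|$ that follows from the lower bound on the small-jump activity), obtaining $\sup_{t\in(0,1]}\sup_x|p_t^{(k)}(x)|<\infty$ for every $k$. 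The point is that the Fourier-analytic bounds are uniform in $t$ because the relevant lower bound on $\mathrm{Re}(\log\widehat{Q_t}(\xi))$ — essentially $c\int_{(-1,1)}(1-\cos(\xi x/g(t)))\nu(\D x)\ge c' |\xi|^\alpha$ for $|\xi|$ large — holds with constants not depending on $t$.

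Part (b) is the large-time analogue and is easier: for $t\ge 1$, $X_t$ has a Gaussian-like small-jump behaviour in the sense that $\ov\sigma^2(1)>0$ (infinite activity) already forces, for $\wt Q_t=X_t/\sqrt t$, a lower bound $\mathrm{Re}(\log\widehat{\wt Q_t}(\xi))\le -c\,t\int_{(-1,1)}(1-\cos(\xi x/\sqrt t))\nu(\D x)$, and for $|\xi|\le \sqrt t$ the summand is $\approx \xi^2 \ov\sigma^2(1)/t$ while the full range can be handled by splitting at $|\xi|=\sqrt t$ and using $\int_{(-1,1)}(1-\cos(\xi x/\sqrt t))\nu(\D x)\ge c$ there; either way $|\widehat{\wt Q_t}(\xi)|\le e^{-c\min\{\xi^2,1\}}$ uniformly in $t\ge 1$, so Fourier inversion gives $\wt Q_t$ an infinitely differentiable density with all derivatives bounded uniformly in $t\ge1$. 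Alternatively one cites~\cite[Thm~3.1]{picard_1997} directly for the process $X$ at times $t\ge 1$ after rescaling by $\sqrt t$.

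The main obstacle I anticipate is bookkeeping the \emph{uniformity in $t$} of the density bounds: Picard's theorems are stated for a fixed process, so the work is in checking that the constants in the small-jump lower bounds (equivalently, the constants in the decay of the characteristic function) do not degenerate as $t\da 0$ in part (a) or $t\to\infty$ in part (b). This is exactly where $X\in\mZ_{\alpha,\rho}$ and the regular variation of $g$ (and of $\ov\sigma^2$) are used: they guarantee that $Q_t$ and $\wt Q_t$, viewed as infinitely divisible laws, have small-jump components that are comparable, uniformly in $t$, to a fixed stable-like reference law. Everything else — differentiating under the Fourier integral, the finiteness of $\int|\xi|^k e^{-c|\xi|^\alpha}\D\xi$ — is routine.
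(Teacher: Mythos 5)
Your strategy coincides with the paper's: both reduce the problem to verifying a uniform (in $t$) small-jump lower bound for the infinitely divisible laws $Q_t$ and $\wt Q_t$, and then invoke Picard~\cite{picard_1997}. You have also correctly identified where the real work lies, namely the uniformity of the constants as $t\da 0$ (part (a)) or $t\to\infty$ (part (b)). However, you leave that verification as an ``anticipated obstacle'' rather than carrying it out, and it is precisely the content of the paper's proof. The paper computes the truncated second moment of the L\'evy measure of $Q_t$ explicitly, $\ov\sigma^2_{Q_t}(u)=(t/g(t)^2)\,\ov\sigma^2(ug(t))$, uses Karamata's theorem to get $\ov\sigma^2(u)\sim\tfrac{\alpha}{2-\alpha}u^2\ov\nu(u)$, then~\cite[Thm~2]{MR3784492} to get $g^{-1}(u)u^{-2}\ov\sigma^2(u)\to c_0>0$, and finally Potter's bounds to turn $g^{-1}(ug(t))\le c^\beta tu^\beta$ into the two-parameter estimate $\inf_{t\in(0,1]}\inf_{u\in(0,1/c]}u^{\beta-2}\ov\sigma^2_{Q_t}(u)>0$ for any $\beta\in(0,\alpha)$. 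That two-parameter lower bound is exactly the hypothesis of~\cite[Lem.~2.3]{picard_1997}, which is stated for \emph{families} of infinitely divisible laws (your citation~\cite[Thm~3.1]{picard_1997} is the single-process density bound used elsewhere in the paper, and does not directly apply to the family $\{Q_t\}_{t\in(0,1]}$, which is not the marginal flow of a L\'evy process).

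There is also a concrete error in your direct Fourier argument for part (b). The estimate $|\widehat{\wt Q_t}(\xi)|\le e^{-c\min\{\xi^2,1\}}$ is \emph{not} integrable in $\xi$, so it neither gives existence of a density nor, a fortiori, bounded derivatives of all orders. The true decay for $|\xi|>\sqrt t$ is governed by the L\'evy tail: splitting the small-jump integral at $|x|=\sqrt t/|\xi|$ shows $t\int_{(-1,1)}(1-\cos(\xi x/\sqrt t))\nu(\D x)\gtrsim t\,\ov\nu(\sqrt t/|\xi|)$, which by regular variation of $\ov\nu$ (index $-\alpha$) is $\gtrsim t^{1-\alpha/2}|\xi|^\alpha\ge |\xi|^\alpha$ for $t\ge1$, i.e.\ an $|\xi|^\beta$-type decay in the exponent with $\beta<\alpha$, uniformly in $t\ge1$. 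That is what makes the Fourier inversion and term-by-term differentiation go through, and it is again exactly the $\inf_{t\ge1}\inf_{u\in(0,1]}u^{\beta-2}\ov\sigma^2_{\wt Q_t}(u)=\inf_{u\in(0,1]}u^{\beta-2}\ov\sigma^2(u)>0$ condition of~\cite[Lem.~2.3]{picard_1997} that the paper verifies in one line using $\ov\sigma^2_{\wt Q_t}(u)=\ov\sigma^2(u\sqrt t)$ and monotonicity. Your fallback (``alternatively cite Picard directly'') is the correct fix; the direct Fourier sketch as written is not.
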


For two functions $f_1,f_2:(0,\infty)\to(0,\infty)$ we say $f_1(t)\sim f_2(t)$ as $t\da 0$ if $\lim_{t\da 0}f_1(t)/f_2(t)=1$. 

\begin{proof}[Proof of Lemma~\ref{lem:generalized_Picard}]
Part (a). We assume without loss of generality that $g(t)\le 1$ for $t\in(0,1]$, and note that $Q_t$ is infinitely divisible. Denote by $\nu_{Q_t}$ the L\'evy measure of $Q_t$, and note for $A \subset \R$ that $\nu_{Q_t}(A)=t\nu(g(t)A)$ and
\[
\ov \sigma^2_{Q_t}(u)
\coloneqq \int_{(-u,u)}x^2 \nu_{Q_t}(\D x)
=\frac{t}{g(t)^2}\int_{(-ug(t),ug(t))}x^2 \nu(\D x)=\frac{t}{g(t)^2}\ov\sigma^2(ug(t)),
\] for $t \in (0,1]$ and $u \in \R\setminus \{0\}$. The regular variation of $\ov\nu$ (see~\cite[Thm~2]{MR3784492}), Fubini's theorem and Karamata's theorem~\cite[Thm~1.5.11(ii)]{MR1015093} imply that, as $u\da0$,
\begin{align*}
\ov\sigma^2(u)
&=-\int_0^u x^2\ov\nu(\D x)
=-\int_0^u 2\int_0^x z\D z\ov\nu(\D x)
=-\int_0^u\int_z^u 2 z\ov\nu(\D x)\D z\\
&=\int_0^u 2 z(\ov\nu(z)-\ov\nu(u)) \D z
=\int_0^u 2 z\ov\nu(z) \D z-u^2\ov\nu(u)
\sim \frac{\alpha}{2-\alpha}u^2\ov\nu(u).
\end{align*}

Since $X\in\mZ_{\alpha,\rho}$, \cite[Thm~2]{MR3784492} implies that $g^{-1}(u)u^{-2}\ov\sigma^2(u)\to c_0$ for some $c_0>0$ as $u\da 0$. Thus, 
\begin{equation*}
0<\inf_{z\in(0,1]}\frac{g^{-1}(z)}{z^2}\ov\sigma^2(z)
\le \inf_{u,t\in(0,1]}\frac{g^{-1}(ug(t))}{u^2g(t)^2}\ov\sigma^2(ug(t)).
\end{equation*}
Since $g$ is regluarly varying with index $1/\alpha$, we suppose that $g(t)=t^{1/\alpha}\varpi(t)$ for a slowly varying function $\varpi$. Thus, Potter's bounds~\cite[Thm~1.5.6]{MR1015093} imply that, for some constant $c>1$ and all $t,u\in(0,1]$, we have  $\varpi(t)/\varpi(tu^\beta)\le cu^{-\beta\delta}$ for $\delta=1/\beta-1/\alpha>0$. Hence, we obtain $ug(t)\le cg(tu^\beta)$ and moreover $g^{-1}(ug(t))\le c^{\beta} tu^\beta$ for all $t\in(0,1]$ and $u\in(0,1/c]$. Multiplying the rightmost term on the display above (before taking infimum) by $tu^\beta/g^{-1}(ug(t))$ gives 
\begin{equation}
\label{eq:picard_condition}
\inf_{t\in(0,1]}\inf_{u\in(0,1/c]} u^{\beta-2}\ov\sigma_{Q_t}^2(u)
=\inf_{t\in(0,1]}\inf_{u\in(0,1/c]}\frac{tu^{\beta}}
    {u^2g(t)^2}\ov\sigma^2(ug(t))
>0.
\end{equation}
Hence,~\cite[Lem.~2.3]{picard_1997} gives the desired result.

Part (b). As before, we see that $\ov\sigma^2_{\wt Q_t}(u)=\ov\sigma^2(u\sqrt{t})$. Hence, the left side of~\eqref{eq:picard_condition} gives
\[
\inf_{t\in[1,\infty)}\inf_{u\in(0,1]}u^{\beta-2}\ov\sigma^2_{\wt Q_t}(u)
=\inf_{u\in(0,1]}u^{\beta-2}\ov\sigma^2(u)>0,
\]
for any $\beta\in(0,\alpha)$. Thus,~\cite[Lem.~2.3]{picard_1997} gives the desired result.
\end{proof}

\begin{lemma} \label{lem:asymp_equiv_Psi_domstable_post_min}
Let $X\in\mZ_{\alpha,\rho}$ for some $\alpha\in(0,1)$ and $\rho\in(0,1]$, denote by $g$ its normalising function and define $G(t)=t/g(t)$ for $t>0$. The following statements hold for any sequences $(u_n)_{n \in \N} \subset (0,\infty)$ and $(s_n)_{n \in \N}\subset (0,\infty)$ such that $u_n\to\infty$ and $s_n\da 0$ as $n \to \infty$:
\begin{itemize}[leftmargin=2.5em, nosep]
\item[\nf(i)]~if $u_nG^{-1}(s_n^{-1})\to \infty$, then 
$\Psi_{s_n}(u_n) \sim \rho\log(u_nG^{-1}(s_n^{-1}))$,
\item[\nf(ii)]~if $u_nG^{-1}(s_n^{-1})\to 0$, then 
$\Psi_{s_n}(u_n)=\Oh([u_nG^{-1}(s_n^{-1})]^q+s_n)$ for any $q\in (0,1]$ with $q<1/\alpha-1$.
\end{itemize}
\end{lemma}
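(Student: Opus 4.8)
By~\eqref{eq:cf_tau} and~\eqref{eq:psi_defn_post_min}, $\Psi_{s_n}(u_n)=\int_0^\infty(1-e^{-u_n t})e^{-\lambda t}\,\p(0<X_t-\gamma_0 t\le s_n t)\,t^{-1}\D t$; replacing $X$ by the centred process $X-\gamma_0\cdot\mathrm{id}$, which lies in $\mZ_{\alpha,\rho}$ with the same normalising function $g$ and stable limit $Z$, I may assume $\gamma_0=0$, so that
\[
\Psi_{s_n}(u_n)=\int_0^\infty(1-e^{-u_n t})e^{-\lambda t}\,\p(0<X_t\le s_n t)\,\frac{\D t}{t}
=\int_0^\infty(1-e^{-u_n t})e^{-\lambda t}\,\p\bigl(0<Q_t\le s_nG(t)\bigr)\,\frac{\D t}{t},
\]
where $Q_t\coloneqq X_t/g(t)\cid Z_1$ as $t\da0$. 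I set $r_n\coloneqq G^{-1}(s_n^{-1})$, so $r_n\da0$, $s_nG(r_n)=1$, and (as $G$ is regularly varying at $0$ of index $1-1/\alpha<0$, hence eventually strictly decreasing) $s_nG(t)>1$ for $t<r_n$ and $s_nG(t)<1$ for $t>r_n$. The ingredients I would gather are: the uniform density bounds of Lemma~\ref{lem:generalized_Picard}, giving $\p(0<X_t\le s_n t)\le C s_nG(t)$ for $t\in(0,1]$ and $\le C's_n\sqrt t$ for $t\ge1$; the crude bound $\p(0<X_t\le s_n t)\le\p(X_t>0)$ with $\sup_{t\le\delta}\p(X_t>0)\to\rho$ as $\delta\da0$; and the sharp form $\p(0<Q_t\le K)\to\p(0<Z_1\le K)$ of the weak convergence for each fixed $K>0$. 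The density bounds already show that the tail $\int_\delta^\infty(\cdots)\,\D t$ is $\Oh(s_n)$ for every fixed $\delta>0$, so I can discard $t\ge\delta$ throughout.

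For part~(i), where $u_nr_n\to\infty$ (hence $1/u_n\ll r_n$), split $\int_0^\delta$ at $1/u_n$ and $r_n$. On $(0,1/u_n)$, using $1-e^{-u_n t}\le u_n t$ and $\p\le\p(X_t>0)$, the contribution is $\Oh(1)$; on $(r_n,\delta)$, using $\p\le C s_nG(t)$ and Karamata's theorem ($\int_{r_n}^\delta G(t)t^{-1}\D t\sim G(r_n)/(1/\alpha-1)$, the integrand being regularly varying of index $-1/\alpha<-1$), the contribution is $C s_nG(r_n)\cdot\Oh(1)=\Oh(1)$; on $(1/u_n,r_n)$, where $1-e^{-u_n t}\approx1$ and $s_nG(t)>1$ so $\p(0<Q_t\le1)\le\p(0<X_t\le s_n t)\le\p(X_t>0)$, the contribution is $\le\bigl(\sup_{t\le r_n}\p(X_t>0)\bigr)\log(u_nr_n)=(\rho+\oh(1))\log(u_nr_n)$. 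Hence $\Psi_{s_n}(u_n)\le(\rho+\oh(1))\log(u_nr_n)$. For the matching lower bound, restrict to $(A/u_n,r_n/M)$ with $A,M$ large: there $1-e^{-u_n t}\ge1-e^{-A}$, and since $s_nG(t)\ge s_nG(r_n/M)\to M^{1/\alpha-1}$ one gets, for $n$ large and uniformly in $t\in(0,r_n/M]$, $\p(0<X_t\le s_n t)\ge\p(0<Q_t\le K)\ge\p(0<Z_1\le K)-\eta$ with $K\coloneqq\tfrac12 M^{1/\alpha-1}$; this gives $\liminf_n\Psi_{s_n}(u_n)/\log(u_nr_n)\ge(1-e^{-A})(\p(0<Z_1\le K)-\eta)$, and letting $A\to\infty$, $M\to\infty$ (so $\p(0<Z_1\le K)\ua\rho$) and $\eta\da0$ gives the bound $\ge\rho$. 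Thus $\Psi_{s_n}(u_n)\sim\rho\log(u_nr_n)=\rho\log(u_nG^{-1}(s_n^{-1}))$. I expect this lower bound to be the main obstacle: the stable approximation is only valid for $t$ small, yet the threshold $K$ must be sent to infinity to recover the full mass $\rho=\p(Z_1>0)$, while $t$ simultaneously runs over an $n$-dependent interval shrinking to $0$, so the order of limits (first $n\to\infty$, then $A,M\to\infty$, then $\eta\da0$) must be organised with care.

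For part~(ii), where $u_nr_n\to0$ (hence $r_n\ll1/u_n$), fix $q\in(0,1]$ with $q<1/\alpha-1$ and $\epsilon\in(0,1/\alpha-1-q)$, and split $\int_0^\delta$ at $r_n$ and $1/u_n$. On $(0,r_n)$, $1-e^{-u_n t}\le u_n t$ and $\p\le1$ give $\Oh(u_nr_n)$. On $(r_n,1/u_n)$, $1-e^{-u_n t}\le u_n t$ and $\p\le C s_nG(t)$, together with Potter's inequality $G(t)\le(1+\epsilon)G(r_n)(t/r_n)^{1-1/\alpha+\epsilon}$ on $[r_n,\delta_0]$, give (after a short integration, treating separately the sign of $2-1/\alpha+\epsilon$, i.e.\ essentially $\alpha\gtrless1/2$) a bound $\Oh\bigl((u_nr_n)^{\min\{1,\,1/\alpha-1-\epsilon\}}\bigr)=\Oh((u_nr_n)^q)$. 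On $(1/u_n,\delta)$, $1-e^{-u_n t}\le1$ and $\p\le C s_nG(t)$, with Karamata ($\int_{1/u_n}^\delta G(t)t^{-1}\D t\sim G(1/u_n)/(1/\alpha-1)$) and Potter ($s_nG(1/u_n)=G(1/u_n)/G(r_n)\le(1+\epsilon)(u_nr_n)^{1/\alpha-1-\epsilon}\le C(u_nr_n)^q$), give $\Oh((u_nr_n)^q)$. Adding the tail $\Oh(s_n)$, we get $\Psi_{s_n}(u_n)=\Oh((u_nr_n)^q+s_n)=\Oh\bigl([u_nG^{-1}(s_n^{-1})]^q+s_n\bigr)$. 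Besides the constant in part~(i), the only other delicate point is making these Potter/Karamata estimates uniform across all $\alpha\in(0,1)$, in particular through $\alpha=1/2$ where $G$ stops being integrable at the origin.
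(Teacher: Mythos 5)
Your proof is correct, and its overall structure closely mirrors the paper's, but the decompositions differ in detail. Both arguments rest on the same three ingredients: the uniform density bounds of Lemma~\ref{lem:generalized_Picard}, the Kolmogorov-distance upgrade of $Q_t\cid Z$, and Karamata/Potter estimates for the regularly varying function $G$. For part~(i), the paper fixes a small parameter $\delta\in(0,\rho/3)$, cuts the integral at $\kappa_n=G^{-1}(\delta/s_n)$ and $1$, substitutes $t\mapsto\kappa_n t$ in the inner integral, and then shows (via the uniform convergence theorem for regular variation) that the probability in the integrand converges uniformly in $t\in(0,1]$ to $\p(0<Z<\delta t^{1-1/\alpha})$; the final asymptotics follow from one further cut at $\log(u_n\kappa_n)^{-1}$. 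You instead cut at $1/u_n$ and $r_n=G^{-1}(1/s_n)$ and get the constant $\rho$ by introducing auxiliary parameters $A,M,\eta$ and then sending $n\to\infty$ before $A,M\to\infty$ and $\eta\da0$; the subtlety you flag about the order of limits is real, and you handle it correctly since the $\liminf$ bound $(1-e^{-A})(\p(0<Z\le\tfrac12 M^{1/\alpha-1})-\eta)$ is free of $n$. Your route avoids the uniform-convergence theorem of regular variation at the cost of juggling three free parameters; the paper's route is more compact but relies on that theorem. For part~(ii), the paper's trick is to apply $1-e^{-x}\le x^q$ once on the whole interval $(\xi_n,1)$, which makes the integrand regularly varying with index $q-1/\alpha<-1$ for every $\alpha\in(0,1)$ and so entirely sidesteps the case analysis around $\alpha=1/2$; your splitting at $r_n$ and $1/u_n$ with $1-e^{-x}\le\min\{x,1\}$ and Potter's bound reaches the same estimate but requires exactly the sign discussion of $2-1/\alpha+\epsilon$ that you anticipated. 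No gaps; only the paper's Part~(ii) bound is cleaner.
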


\begin{proof}
Part (i). Define $Q_t\coloneqq (X_t-\gamma_0t)/g(t)$ and note that
\[
\Psi_{s_n}(u_n)
=\int_0^\infty (1-e^{-tu_n})e^{-\lambda t} 
    \p\big(0<Q_t \leq s_nG(t) \big)\frac{\D t}{t}, \quad \text{for all }n \in \N.
\]
Fix $\delta\in(0,\rho/3)$, let $\kappa_n\coloneqq G^{-1}(\delta/s_n)$ and note that $\kappa_n\da 0$ as $n\to\infty$. We will now split the integral in the previous display at $\kappa_n$ and $1$ and find the asymptotic behaviour of each of the resulting integrals.

The integral on $[1,\infty)$ is bounded as $n\to\infty$: 
\begin{align*}
\int_1^\infty (1-e^{-tu_n})e^{-\lambda t} 
    \p\big(0<Q_t \leq s_nG(t) \big)\frac{\D t}{t} 
\le \int_1^\infty e^{-\lambda t}\frac{\D t}{t}<\infty.
\end{align*} 
Next, we consider the integral on $[\kappa_n,1)$. By Lemma~\ref{lem:generalized_Picard}(a), there exists a uniform upper bound $C>0$ on the densities of $Q_t$, $t\in(0,1]$. An application of~\cite[Thm~1.5.11(i)]{MR1015093} gives, as $n\to\infty$,
\begin{align*}
\int_{\kappa_n}^1 (1-e^{-u_nt})e^{-\lambda t} 
    \p\big(0<Q_{t} \le s_n G(t)\big)\frac{\D t}{t}
&\le C\int_{\kappa_n}^1 s_n G(t) \frac{\D t}{t} \sim \frac{\alpha C}{1-\alpha} s_nG(\kappa_n)
= \frac{\delta\alpha C}{1-\alpha}<\infty.
\end{align*}
Since we will prove that $\Psi_{s_n}(u_n)\to\infty$ as $n\to\infty$, the asymptotic behaviour of $\Psi_{s_n}(u_n)$ will be driven by asymptotic behaviour of the integral on $(0,\kappa_n)$:
\begin{equation}
\label{eq:J_tn}
J_n^0
\coloneqq \int_0^1 (1-e^{-u_n\kappa_nt})e^{-\lambda \kappa_nt} 
    \p\big(0<Q_{\kappa_n t} \leq s_nG(\kappa_nt)\big)\frac{\D t}{t}.
\end{equation}

We will show that, asymptotically as $n \to \infty$, we may replace the probability in the integrand with the probability $\p(0<Z<\delta t^{1-1/\alpha})$ in terms of the limiting $\alpha$-stable random variable $Z$. Since $Z$ has a bounded density (see, e.g.~\cite[Ch.~4]{MR1745764}), the weak convergence $Q_t \cid Z$ as $t \da 0$ implies that the distributions functions converge in Kolmogorov distance by~\cite[1.8.31--32, p.~43]{MR1353441}. Thus, since $\kappa_n\to0$ as $n\to\infty$, there exists some $N_\delta\in\N$ such that 
\[
\sup_{n\ge N_\delta}\sup_{t\in(0,\kappa_n]}\sup_{x\in\R}
    |\p(0<Q_t\le x)-\p(0<Z\le x)|<\delta,
\]
where $\delta\in(0,\rho/3)$ is as before, arbitrary but fixed. In particular, the following inequality holds $\sup_{n\ge N_\delta}\sup_{t\in(0,\kappa_n]}
    |\p(0<Q_t\le s_n G(t))-\p(0<Z\le s_nG(t))|<\delta$.
For any $N\ge N_\delta$ the triangle inequality yields
\begin{align*}
B_{\delta,N}
&\coloneqq\sup_{n\ge N}\sup_{t\in(0,1]}
|\p(0<Z<\delta t^{1-1/\alpha}) 
    - \p(0<Q_{t\kappa_n}\le s_n G(t\kappa_n))|\\
&\le \delta + \sup_{n\ge N}\sup_{t\in(0,1]}
|\p(0<Z<\delta t^{1-1/\alpha}) 
    - \p(0<Z\le s_n G(t\kappa_n))|\\
&\le \delta + \sup_{n\ge N}\sup_{t\in(0,1]}
\p(m_{t,n}<Z<M_{t,n}),
\end{align*} 
where $m_{t,n}\coloneqq\min\{s_n G(t\kappa_n),\delta t^{1-1/\alpha}\}$ and $M_{t,n}\coloneqq\max\{s_n G(t\kappa_n),\delta t^{1-1/\alpha}\}$. We aim to show that $B_{\delta,N_\delta'}<2\delta$ for some $N_\delta'\in\N$. 

By~\cite[Ch.~4]{MR1745764}, there exists $K>0$ such that the stable density of $Z$ is bounded by the function $x\mapsto Kx^{-\alpha-1}$ for all $x>0$. Thus, since $M_{t,n}-m_{t,n}=|\delta t^{1-1/\alpha} - s_nG(t\kappa_n)|$, we have
\begin{equation}
\label{eq:min_max_Z}
\begin{split}
\p(m_{t,n}<Z<M_{t,n})
&\le Km_{t,n}^{-\alpha-1} 
    |\delta t^{1-1/\alpha} - s_nG(t\kappa_n)|\\
&\le K((\delta t^{1-1/\alpha})^{-\alpha-1} + (s_nG(t\kappa_n))^{-\alpha-1})
    |\delta t^{1-1/\alpha} - s_nG(t\kappa_n)|.
\end{split}
\end{equation}
To show that this converges uniformly in $t\in(0,1]$, we consider both summands. First, we have
\[
(\delta t^{1-1/\alpha})^{-\alpha-1} 
    |\delta t^{1-1/\alpha} - s_nG(t\kappa_n)|\\
=\delta^{-\alpha}
\bigg| t^{1-\alpha} 
    - \frac{(t\kappa_n)^{(1-\alpha^2)/\alpha}G(t\kappa_n)}
        {\kappa_n^{(1-\alpha^2)/\alpha}G(\kappa_n)}\bigg|,
\]
which tends to $0$ as $n\to\infty$ uniformly in $t\in(0,1]$ by~\cite[Thm~1.5.2]{MR1015093} since $t\mapsto t^{(1-\alpha^2)/\alpha}G(t)$ is regularly varying at $0$ with index $1-\alpha>0$ (recall that $g$ is regularly varying at $0$ with index $1/\alpha$ and $G(t)=t/g(t)$). Similarly, since $s_n=\delta /G(\kappa_n)$, we have
\[
(s_nG(t\kappa_n))^{-\alpha-1} 
    |\delta t^{1-1/\alpha} - s_nG(t\kappa_n)|\\
=\delta^{-\alpha}
\bigg| \frac{(t\kappa_n)^{1-1/\alpha}
        G(t\kappa_n)^{-\alpha-1}}{\kappa_n^{1-1/\alpha}G(\kappa_n)^{-\alpha-1}} 
    - \frac{G(t\kappa_n)^{-\alpha}}{G(\kappa_n)^{-\alpha}}\bigg|.
\]
Since both terms in the last line converge to $\delta^{\alpha}t^{1-\alpha}$ as $n\to\infty$ uniformly in $t\in(0,1]$ by~\cite[Thm~1.5.2]{MR1015093}, the difference tends to $0$ uniformly too. Hence, the right side of~\eqref{eq:min_max_Z} converges to $0$ as $n\to\infty$ uniformly in $t\in(0,1]$. Thus, for a sufficiently large $N'_\delta$, we have
\begin{equation}
\label{eq:B_delta}
\sup_{n\ge N'_\delta}\sup_{t\in(0,1]}
|\p(0<Z<\delta t^{1-1/\alpha}) 
    - \p(0<Q_{t\kappa_n}\le s_n G(t\kappa_n))|
=B_{\delta,N'_\delta}<2\delta.
\end{equation}

We now analyse a lower bound on the integral $J_n^0$ in~\eqref{eq:J_tn}. By~\eqref{eq:B_delta}, for all $n \ge N_\delta'$, we have
\[
J_n^0
\ge \int_0^{1} (1-e^{-u_n\kappa_nt})e^{-\lambda\kappa_nt} 
    \big(\p\big(0<Z \le \delta t^{1-1/\alpha}\big)-2\delta\big)\frac{\D t}{t}.
\]
Recall that $\kappa_n=G^{-1}(\delta/s_n)$, define $\xi_n\coloneqq G^{-1}(1/s_n)$ and note from the regular variation of $G^{-1}$ that $\kappa_n/\xi_n\to \delta^{\alpha/(\alpha-1)}$ as $n\to\infty$, implying  $\log(u_n\kappa_n)\sim\log(u_n\xi_n)$ as $n \to \infty$ since $u_n\xi_n\to\infty$. We split the integral from the display above at $\log(u_n\kappa_n)^{-1}$ and note that
\begin{multline*}
\int_{\log(u_n\kappa_n)^{-1}}^1 (1-e^{-u_n\kappa_nt})e^{-\lambda\kappa_nt} 
\big(\p\big(0<Z \le \delta t^{1-1/\alpha}\big)+2\delta\big)\frac{\D t}{t}\\ \le \big(1+2\delta\big)\int_{\log(u_n\kappa_n)^{-1}}^1
\frac{\D t}{t}
=\big(1+2\delta\big)\log (\log(u_n\kappa_n))
\sim\big(1+2\delta\big)\log (\log(u_n\xi_n)),
\quad\text{as }n\to\infty.
\end{multline*} 
For the integral over $(0,\log(u_n\kappa_n)^{-1})$, first note that, for all sufficiently large $n\in\N$, we have
\[
\p(0<Z \le \delta t^{1-1/\alpha})
\ge \p(0<Z \le \delta \log(u_n\kappa_n)^{1/\alpha-1})
>\rho-\delta,
\qquad t\in(0,\log(u_n\kappa_n)^{-1}),
\] 
since $u_n\kappa_n\to\infty$. Thus, we have
\begin{multline*}
    \int_0^{\log(u_n\kappa_n)^{-1}} (1-e^{-u_n\kappa_nt})e^{-\lambda\kappa_nt} 
    \big(\p\big(0<Z \le \delta t^{1-1/\alpha}\big)-2\delta\big)\frac{\D t}{t} \\
    \ge \big(\rho-3\delta\big)e^{-\lambda\kappa_n/\log(u_n\kappa_n)}\int_0^{\log(u_n\kappa_n)^{-1}}(1-e^{-u_n\kappa_nt}) 
    \frac{\D t}{t}\sim \big(\rho-3\delta\big)\log(u_n\xi_n), \quad \text{ as }n \to \infty,
\end{multline*}
where the asymptotic equivalence follows from the fact that
$u_n\kappa_n/\log(u_n\kappa_n)\to\infty $ as $n\to\infty$
and
$\int_0^1 (1-e^{-xt})t^{-1}\D t\sim \log x$ as $x\to\infty$. (In fact, we have $\int_0^1 (1-e^{-xt})t^{-1}\D t= \log x+\Gamma(0,x)+\gamma$ for $x>0$ where $\Gamma(0,x)=\int_x^\infty t^{-1}e^{-t}\D t$ is the upper incomplete gamma function and $\gamma$ is the Euler--Mascheroni constant.) This shows that $\liminf_{n\to\infty}J_n^0/\log(u_n\xi_n)\ge \rho-3\delta>0$ since $\delta\in(0,\rho/3)$. 

Similarly,~\eqref{eq:B_delta} implies that for all $n\ge N'_\delta$, we have
\begin{align*}
J_n^0
&\le \int_0^{1} (1-e^{-u_n\kappa_nt})e^{-\lambda\kappa_nt} 
    \big(\p\big(0<Z \le \delta t^{1-1/\alpha}\big)+2\delta\big)
        \frac{\D t}{t}\\
&\le (\rho+2\delta)\int_0^{1} (1-e^{-u_n\kappa_nt})
        \frac{\D t}{t}
\sim (\rho+2\delta)\log(u_n\xi_n),
\quad\text{as }n\to\infty,
\end{align*}
implying $\limsup_{n\to\infty}J_n^0/\log(u_n\xi_n)\le \rho+2\delta$. Altogether, we deduce that
\[
\rho-3\delta
\le \liminf_{n\to\infty}\Psi_{s_n}(u_n)/\log(u_n\xi_n)
\le\limsup_{n\to\infty}\Psi_{s_n}(u_n)/\log(u_n\xi_n)
\le \rho+2\delta.
\]
Since $\delta\in(0,\rho/3)$ is arbitrary and the sequence $\Psi_{s_n}(u_n)/\log(u_n\xi_n)$ does not depend on $\delta$, we may take $\delta\da 0$ to obtain Part (i).

Part (ii). We will bound each of the terms in $\Psi_{s_n}(u_n)=J_n^1+J_n^2+J_n^3$, where $\xi_n=G^{-1}(1/s_n)$ and
\begin{gather*}
J_n^1\coloneqq
\int_0^{\xi_n} (1-e^{-u_nt})e^{-\lambda t} \p(0<Q_t\leq s_n G(t))\frac{\D t}{t}, 
\quad
J_n^2\coloneqq
\int_{\xi_n}^{1} (1-e^{-u_nt})e^{-\lambda t} \p(0<Q_t\leq s_n G(t))\frac{\D t}{t},\\
\text{and}\qquad
J_n^3\coloneqq
\int_{1}^\infty (1-e^{-u_nt})e^{-\lambda t} \p(0<X_t-\gamma_0t\leq s_n t)\frac{\D t}{t}.
\end{gather*} 

Recall that our assumption in part (ii) states that $u_n\xi_n\to 0$ as $n\to\infty$. Using the elementary inequality $1-e^{-x}\le x$ for $x \ge 0$, we obtain $J_n^1=\Oh(u_n\xi_n)$ as $n \to \infty$. Next we bound $J_n^3$.  Lemma~\ref{lem:generalized_Picard}(b) shows the existence of a uniform upper bound $\wt C>0$ on the densities of $X_t/\sqrt{t}$. Thus it holds that $\p(0<X_t-\gamma_0t\le s_n t)=\p(\gamma_0\sqrt{t}<X_t/\sqrt{t}\le (\gamma_0+s_n) \sqrt{t})
\le \wt C s_n\sqrt{t}$ and hence 
\begin{equation*}
J_n^3
\le\wt C s_n\int_1^\infty 
    t^{-1/2}e^{-\lambda t}\D t
=\Oh(s_n), \quad \text{ as }n \to \infty.
\end{equation*}

It remains to bound $J_n^2$. Let $q\in(0,1]$ with $q<1/\alpha-1$ and $C>0$ be a uniform bound on the densities of $Q_t$ (whose existence is guaranteed by Lemma~\ref{lem:generalized_Picard}(a)). The elementary bound $1-e^{-x}\le x^q$ for $x\ge 0$ for $q\in (0,1]$ and~\cite[Thm~1.5.11(i)]{MR1015093} yield
\[
J_n^2
\le C u_n^qs_n \int_{\xi_n}^1t^q G(t) \frac{\D t}{t} 
\sim \frac{C}{1/\alpha-q-1} u_n^q s_n G(\xi_n) \xi_n^q 
= \Oh (u_n^q\xi_n^q ), \quad \text{ as }n \to \infty.\qedhere
\]
\end{proof}

\begin{proof}[Proof of Theorem~\ref{thm:upper_fun_C'_post_min}]
Throughout this proof we let $\phi(u)\coloneqq \gamma u^{-1} (\log\log u)^r$, for some $\gamma>0$, $r \in \R$. 

Part (i). Since $p$ is arbitrary on $(1/\rho,\infty)$ and $f(t)=1/G(t\log^p(1/t))$, it suffices to show that $\limsup_{t\da0}(\wh C'_{t+\wh\tau_s}-s)/f(t)=\limsup_{t \downarrow 0} L_t/f(t)<\infty$ a.s. (Recall that $L_t=C'_{t+\wh\tau_s}-s$ and $\Psi_u(w)=-\log\E[e^{-wY_u}]$ for all $u,w\ge 0$.) By Theorem~\ref{thm:limsup_L}(a), it suffices to find a positive sequence $(\theta_n)_{n \in \N}$ with $\lim_{n\to \infty}\theta_n=\infty$ such that  $\sum_{n=1}^\infty \exp(\theta_n t_n-\Psi_{f(t_n)}(\theta_n))<\infty$ and $\limsup_{n \to \infty} f(t_n)/f(t_{n+1})<\infty$ where $t_n\coloneqq \phi(\theta_n)$.

Let $\theta_n\coloneqq e^n$ and $r=0$. Note that the regular variation of $f$ at $0$ yields $\limsup_{n \to \infty}f(t_n)/f(t_{n+1})=\lim_{n \to \infty}f(t_n)/f(t_{n+1})= e^{1-1/\alpha}$.
Thus, it suffices to prove that the series above is finite. Since $t_n=\phi(\theta_n)$, it follows that $t_n\theta_n=\gamma$. Note from the definition of $f$ that 
\begin{equation}
\label{eq:stable_f_asymp4_pm}
uG^{-1}(f(\phi(u))^{-1})
=u h(u)(\log (\phi(u)^{-1}))^{p}
=\gamma (\log (\gamma^{-1} u))^{p}
\sim\gamma (\log u)^{p}\to\infty,
\quad \text{ as }u \to \infty.
\end{equation}
By Lemma~\ref{lem:asymp_equiv_Psi_domstable_post_min}(i) we have $\Psi_{f(t_n)}(\theta_n)\sim \rho\log(\theta_n G^{-1}(f(t_n)^{-1}))$ as $n \to \infty$, since $\theta_n G^{-1}(f(t_n)^{-1}) \sim
\gamma(\log\theta_n)^{p}\to \infty$ as $n \to \infty$ by~\eqref{eq:stable_f_asymp4_pm}. 

Fix some $\ve>0$ with $(1-\ve)\rho p>1$. Note that  $\Psi_{f(t_n)}(\theta_n)
\ge (1-\ve)\rho p\log\log\theta_n$ for all sufficiently large $n$. It suffices to show that the following sum is finite:
\[
\sum_{n=1}^\infty \exp\big(\gamma
-(1-\ve)\rho p\log\log\theta_n\big).
\]
Since $(1-\ve)\rho p>1$, the sum in the display above is bounded by a multiple of $\sum_{n=1}^\infty n^{-(1-\ve)\rho p}<\infty$. 

Part (ii). As before, since $p$ is arbitrary in $(0,1/\rho)$, it suffices to show that $\limsup_{t \downarrow 0} L_t/f(t)\ge 1$ a.s. By Theorem~\ref{thm:limsup_L}(b), it suffices to find a positive sequence $(\theta_n)_{n \in \N}$ satisfying $\lim_{n\to \infty}\theta_n=\infty$, such that $\sum_{n=1}^\infty (\exp(-\Psi_{f(t_n)}(\theta_n))-\exp(-\theta_n t_n))=\infty$ and $\sum_{n=1}^\infty \Psi_{f(t_{n+1})}( \theta_n)<\infty$. 

Let $r=\gamma=1$, choose $\sigma>1$ and $\ve>0$ to satisfy $\sigma(1+\ve)\rho p<1$ and set $\theta_n\coloneqq e^{n^\sigma}$ for $n \in \N$. We start by showing that the second sum in the paragraph above is finite. Since $\sigma>1$,~\eqref{eq:stable_f_asymp4_pm} yields
\begin{equation}
\label{eq:stable_f_asymp3_pm}
\theta_nG^{-1}(f(t_{n+1})^{-1})\sim \frac{\theta_n}{\theta_{n+1}}(\log\theta_{n+1})^{p}\log\log\theta_{n+1}\da 0, \quad \text{ as }n \to \infty.
\end{equation} 
Hence, Lemma~\ref{lem:asymp_equiv_Psi_domstable_post_min}(ii) with $q \in (0,1]$ and $q<1/\alpha-1$ and~\eqref{eq:stable_f_asymp3_pm} imply 
\begin{equation*}
\Psi_{f(t_{n+1})}(\theta_n)
=\Oh\big([\theta_nG^{-1}(f(t_{n+1})^{-1})]^{q}+f(t_{n+1})\big),
\quad\text{as }n\to\infty.
\end{equation*} 
By~\eqref{eq:stable_f_asymp3_pm}, it is enough to show that
\begin{align*}
\sum_{n=1}^\infty \bigg(\frac{\theta_n}{\theta_{n+1}}(\log\theta_{n+1})^{p}\log\log\theta_{n+1}\bigg)^{q}<\infty,
\qquad\text{and}\qquad
\sum_{n=1}^\infty f(t_{n+1})<\infty.
\end{align*}
Newton's generalised binomial theorem implies that $\theta_n/\theta_{n+1}=\exp(n^\sigma-(n+1)^\sigma)\le \exp(-\sigma n^{\sigma-1}/2)$ for all sufficiently large $n$. Since $\log\theta_{n+1}\sim n^\sigma$, we conclude that the first series in the previous display is indeed finite. The second series is also finite since $f\circ h$ is regularly varying at infinity with index $(\alpha-1)/\alpha<0$ (recall that $t_{n+1}=\phi(\theta_{n+1})$). 

Next we prove that $\sum_{n=1}^\infty (\exp(-\Psi_{f(t_n)}(\theta_n))-\exp(-\theta_n t_n))=\infty$. Note that we have $\exp(-\theta_nt_n)=\exp(-\log \log\theta_n)=n^{-\sigma}$, which is summable. Applying Lemma~\ref{lem:asymp_equiv_Psi_domstable_post_min}(i) and~\eqref{eq:stable_f_asymp4_pm}, we see that $\Psi_{f(t_n)}(\theta_n)\sim \rho\log(\theta_n G^{-1}(f(t_n)^{-1}))$ as $n \to \infty$. As in Part~(i), it is easy to see that for every $\ve>0$, the inequality $\Psi_{f(t_n)}(\theta_n)
\le (1+\ve)\rho
    p\log\log\theta_n$
holds for all sufficiently large $n$. Thus 
$\exp(-\Psi_{f(t_{n})}(\theta_n))
\ge n^{-\sigma(1+\ve)\rho p}$ is not summable (since $\sigma(1+\ve)\rho p<1$): $\sum_{n=1}^\infty \exp(-\Psi_{f(t_n)}(\theta_n))=\infty$, completing the proof.
\end{proof}

\subsection{Upper and lower functions at time \texorpdfstring{$0$}{0} - proofs}
\label{subsec:proofs_c'_at_0}

Fix any $\lambda>0$. Let $Y_s \coloneqq  \tau_{-1/s}$ for $s\in(0,\infty)$ and note that the mean jump measure of $Y_s$ is given by 
\begin{equation*}
\Pi(\D s, \D t)
\coloneqq t^{-1}e^{-\lambda t}\p(-t/X_t\in \D s)\D t,
\end{equation*} 
implying $\Pi((0,s], \D t)
=t^{-1}e^{-\lambda t}\p(X_t\le -t/s)\D t$. Since $\wh C'$ is the right-inverse of $\wh\tau$, we have the identity $\wh C'_t=-1/L_t$ where $L_t\coloneqq\inf\{s>0\,:\, Y_s>t\}$. Thus, $\limsup_{t\da0}|\wh C'_t| f(t)$ equals $0$ (resp. $\infty$) if and only if $\liminf_{t\da 0}L_t/f(t)$ equals $\infty$ (resp. $0$). Corollary~\ref{cor:L_liminf} and Proposition~\ref{prop:Y_limsup} above are the ingredients in the proof of Theorem~\ref{thm:C'_limsup}.

\begin{proof}[Proof of Theorem~\ref{thm:C'_limsup}]
Since the conditions in Theorem~\ref{thm:Y_limsup} only involve integrating the mean measure $\Pi$ of $Y$ near the origin, we may ignore the factor $e^{-\lambda t}$ in the definition of the mean measure $\Pi$ above. After substituting $\Pi(\D u, \D t)
= t^{-1}\p(-t/X_t\in \D u)\D t$ in conditions \eqref{eq:Pi_large} and \eqref{eq:Pi_var_inv}--\eqref{eq:Pi_mean_inv}, we obtain the conditions in~\eqref{eq:C'_large}--\eqref{eq:C'_mean}.
Thus, Corollary~\ref{cor:L_liminf} and the identity $\wh C_t=-1/L_t$ yield the claims in Theorem~\ref{thm:C'_limsup}. 
\end{proof}

The following technical lemma which establishes the asymptotic behaviour of the characteristic exponent $\Phi$ defined in~\eqref{eq:cf_tau}. This result plays an important role in the proof of Theorem~\ref{thm:lower_fun_C'}. We will assume that $X \in\mZ_{\alpha,\rho}$. For simplicity, by virtue of~\cite[Eq.~(1.5.1)~\&~Thm~1.5.4]{MR1015093}, we assume without loss of generality that: $g(t)=1$ for $t\ge 1$, $g$ is continuous and decreasing on $(0,1]$ and the function $G(t)=t/g(t)$ is continuous and increasing on $(0,\infty)$. Hence, the inverse $G^{-1}$ of $G$ is also continuous and increasing.

\begin{lemma} \label{lem:asymp_equiv_Phi_domstable}
Let $X\in\mZ_{\alpha,\rho}$ for some $\alpha\in(1,2]$ and $\rho\in(0,1)$ and assume $\E[X_1^2]<\infty$ and $\E[X_1]=0$. The following statements hold for any sequences $(u_n)_{n \in \N} \subset (0,\infty)$ and $(s_n)_{n \in \N}\subset \R_-$ such that $u_n\to\infty$ and $s_n\to-\infty$ as $n \to \infty$:
\begin{itemize}[leftmargin=2.5em, nosep]
\item[\nf(i)] if $u_nG^{-1}(|s_n|^{-1})\to \infty$, then 
$\Phi_{s_n}(u_n) \sim (1-\rho)\log(u_nG^{-1}(|s_n|^{-1}))$,
\item[\nf(ii)] if $u_nG^{-1}(|s_n|^{-1})\da 0$, then 
$\Phi_{s_n}(u_n)=\Oh( [u_nG^{-1}(|s_n|^{-1})]^{(\alpha-1)/2}+|s_n|^{-2})$.
\end{itemize}
\end{lemma}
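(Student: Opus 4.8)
The plan is to mimic the structure of the proof of Lemma~\ref{lem:asymp_equiv_Psi_domstable_post_min}, since $\Phi_{s_n}(u_n)=\int_0^\infty(1-e^{-u_nt})e^{-\lambda t}\p(X_t\le s_nt)\D t/t$ has the same shape as $\Psi_{s_n}(u_n)$ with the ``pizza slice'' event $\{0<Q_t\le s_nG(t)\}$ replaced by the negative tail event $\{X_t\le s_nt\}$. Writing $s_n=-|s_n|$ and rescaling by the normalising function $g$, the key probability becomes $\p(X_t\le s_nt)=\p(X_t/g(t)\le -|s_n|G(t))$, and the natural scale at which $|s_n|G(t)$ is of order one is $t\asymp \kappa_n:=G^{-1}(1/|s_n|)$. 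So I would again split the integral $\Phi_{s_n}(u_n)=\int_0^{\kappa_n}+\int_{\kappa_n}^1+\int_1^\infty$ (in part (i)) and show the first piece dominates.

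\textbf{Part (i).} After substituting $t=\kappa_n r$ the dominant integral is $J_n^0=\int_0^1(1-e^{-u_n\kappa_n r})e^{-\lambda\kappa_n r}\p(X_{\kappa_n r}/g(\kappa_n r)\le -|s_n|G(\kappa_n r))\D r/r$. Using the weak convergence $X_t/g(t)\cid Z$ as $t\da0$ (where $Z$ is $\alpha$-stable with $\p(Z>0)=\rho$, so $\p(Z\le 0)=1-\rho$), together with convergence in Kolmogorov distance (valid since $Z$ has a bounded continuous density, cf.~\cite[1.8.31--32, p.~43]{MR1353441}), I would replace $\p(X_{\kappa_n r}/g(\kappa_n r)\le -|s_n|G(\kappa_n r))$ by $\p(Z\le -r^{1-1/\alpha})$ uniformly in $r\in(0,1]$; the error term coming from $|G(\kappa_n r)/(|s_n|^{-1}) - r^{1-1/\alpha}|$ being controlled via Potter-type uniform convergence~\cite[Thm~1.5.2]{MR1015093} applied to the regularly varying function $t\mapsto t^{1-1/\alpha}G(t)$ of positive index $1-1/\alpha$ (here one uses $\alpha>1$ crucially), and the stable density bound $|$density of $Z| \le K|x|^{-\alpha-1}$ for large $|x|$ from~\cite[Ch.~4]{MR1745764}. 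Near $r=0$ the factor $\p(Z\le -r^{1-1/\alpha})\to\p(Z<0)=1-\rho$, the Picard-type uniform density bound of Lemma~\ref{lem:generalized_Picard} controls the intermediate range, and $\int_0^1(1-e^{-xr})r^{-1}\D r\sim\log x$ as $x\to\infty$ gives $J_n^0\sim(1-\rho)\log(u_n\kappa_n)$; finally $\log(u_n\kappa_n)\sim\log(u_nG^{-1}(1/|s_n|))$ since $\kappa_n=G^{-1}(1/|s_n|)$. The tails $\int_{\kappa_n}^1$ and $\int_1^\infty$ are $O(\log\log)$-type or bounded and hence negligible — the $\int_1^\infty$ piece is bounded by $\int_1^\infty e^{-\lambda t}\D t/t<\infty$ directly, without needing the Gaussian-scale density bound of Lemma~\ref{lem:generalized_Picard}(b), because the negative tail probability is automatically $\le 1$.

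\textbf{Part (ii).} Here $u_nG^{-1}(1/|s_n|)\da0$, so I would split $\Phi_{s_n}(u_n)=J_n^1+J_n^2+J_n^3$ with breakpoints at $\xi_n:=G^{-1}(1/|s_n|)$ and $1$, exactly as in the proof of Lemma~\ref{lem:asymp_equiv_Psi_domstable_post_min}(ii). On $(0,\xi_n)$ use $1-e^{-u_nt}\le u_nt$ and the trivial bound $\p(\cdot)\le1$ to get $J_n^1=O(u_n\xi_n)$; this already matches the stated order since $(u_n\xi_n)^{(\alpha-1)/2}\ge u_n\xi_n$ when $u_n\xi_n\le1$. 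On $(1,\infty)$, since $\E[X_1]=0$ and $\E[X_1^2]<\infty$, Chebyshev gives $\p(X_t\le s_nt)=\p(X_t\le -|s_n|t)\le \E[X_t^2]/(|s_n|t)^2 = t\E[X_1^2]/(|s_n|^2t^2)$, so $J_n^3\le |s_n|^{-2}\E[X_1^2]\int_1^\infty t^{-2}e^{-\lambda t}\D t=O(|s_n|^{-2})$; note $|s_n|^{-2}$ and $(G^{-1}(1/|s_n|))^{\text{something}}$ must be reconciled — since $G^{-1}$ is regularly varying of index $\alpha$ at $0$, $|s_n|^{-2}$ is genuinely of the claimed form. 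On $(\xi_n,1)$ use $1-e^{-u_nt}\le (u_nt)^q$ with $q=(\alpha-1)/2\in(0,1/2]$, the rescaled tail $\p(X_t/g(t)\le -|s_n|G(t))$ bounded by $\p(Z\le -cr^{1-1/\alpha})+o(1)\le K' r^{-(1-1/\alpha)\alpha}= K' r^{\alpha-1}$ from the stable tail estimate together with $|s_n|G(t)\asymp (t/\xi_n)^{1-1/\alpha}$, and Karamata~\cite[Thm~1.5.11(i)]{MR1015093} to integrate $\int_{\xi_n}^1 t^{q}\cdot(t/\xi_n)^{-(1-1/\alpha)}\D t/t$, obtaining $J_n^2=O((u_n\xi_n)^{q})=O((u_nG^{-1}(1/|s_n|))^{(\alpha-1)/2})$. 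Summing the three bounds gives the claim.

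\textbf{Main obstacle.} The delicate point, exactly as in Lemma~\ref{lem:asymp_equiv_Psi_domstable_post_min}, is the \emph{uniform} (in $r\in(0,1]$) replacement of the true rescaled tail probability by the limiting stable tail in the dominant integral $J_n^0$ of part (i): one must control $\p(X_{\kappa_n r}/g(\kappa_n r)\le -|s_n|G(\kappa_n r))$ versus $\p(Z\le -r^{1-1/\alpha})$ uniformly down to $r=0$, which requires combining (a) Kolmogorov-distance convergence uniformly over small time arguments, (b) the stable density upper bound $K|x|^{-\alpha-1}$ to convert a difference of arguments into a difference of probabilities, and (c) uniform convergence theorems for regularly varying functions to show $|s_n|G(\kappa_n r)\to r^{1-1/\alpha}$ uniformly in $r$. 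The argument in part (ii) on the strip $(\xi_n,1)$ requires the analogous — but one-sided and hence slightly easier — uniform tail bound. The remaining steps are routine once these uniformities are in hand, and I would organise the write-up so that the part-(i) uniform estimate is proved first as the technical heart, with part (ii) referring back to it.
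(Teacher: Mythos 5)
Your overall strategy — split at $\kappa_n\asymp G^{-1}(1/|s_n|)$, use Kolmogorov-distance convergence of $Q_t=X_t/g(t)$ to the stable limit $Z$ for small $t$, and extract $\log(u_n\kappa_n)$ asymptotics — is the right one and matches the paper. However, there is a genuine gap in how you treat the integral over $t$ bounded away from $0$, and it is not cosmetic.

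For Part~(i), after splitting at $\kappa_n$, you handle $\int_1^\infty$ by $\p(\cdot)\le 1$ but dismiss $\int_{\kappa_n}^1$ as ``$O(\log\log)$-type or bounded''. This is false with the crude bound $\p\le 1$: that bound gives only $\int_{\kappa_n}^1\D t/t=\log(1/\kappa_n)$, and $\log(1/\kappa_n)\asymp\tfrac{\alpha}{\alpha-1}\log|s_n|$ can be of the \emph{same order} as the target $\log(u_n\xi_n)$ (e.g.\ if $u_n=|s_n|^{2\alpha/(\alpha-1)}$). The paper closes this by establishing the uniform moment bound $K:=\sup_{t\ge 0}\E[|Q_t|^q]<\infty$ for any $q\in(0,\alpha)$ (this needs two external results, one for small $t$ and one for large $t$, the latter using $\E[X_1^2]<\infty$), and then Markov: $\p(Q_t\le s_nG(t))\le K|s_n|^{-q}G(t)^{-q}$, which Karamata integrates to an $n$-uniform constant over $[\kappa_n,\infty)$. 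Your proposal has no substitute for this. The same issue recurs in Part~(ii) on $(\xi_n,1)$: you propose ``$\p(Q_t\le -|s_n|G(t))\le\p(Z\le -cr^{1-1/\alpha})+o(1)$'' and then use the stable tail bound, but Kolmogorov-distance convergence gives a small error only as $t\da 0$; for $t$ of order $1$ the error is a fixed constant, which can dominate the tiny tail probability you are trying to estimate. Again the cure is the uniform moment bound on $Q_t$ itself (not on $Z$), which bypasses any comparison with the limit.

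A secondary remark, not a gap: for the dominant integral $\int_0^{\kappa_n}$ in Part~(i), you import the full machinery from Lemma~\ref{lem:asymp_equiv_Psi_domstable_post_min} (stable density bound $K|x|^{-\alpha-1}$, uniform convergence of regularly varying functions, the further split at $\log(u_n\kappa_n)^{-1}$). The paper instead takes $\kappa_n:=G^{-1}(\delta/|s_n|)$, so that $|s_n|G(t)\le\delta$ on $(0,\kappa_n]$, and then the Kolmogorov-distance bound combined with the mere \emph{continuity} of $x\mapsto\p(Z\le x)$ at $0$ gives $|\p(Q_t\le s_nG(t))-(1-\rho)|\le B_\delta$ uniformly on $(0,\kappa_n]$, with $B_\delta\da 0$ as $\delta\da 0$. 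This works because the target here is $\p(Z\le 0)=1-\rho$, not the vanishing quantity $\p(0<Z\le\cdot)$ as in the $\Psi$-lemma, so there is no delicate cancellation to track. Your more elaborate route can be made to work, but it adds steps with no benefit.
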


\begin{proof}
Part (i). Denote $Q_t\coloneqq X_t/g(t)$ and note that, for all $n \in \N$,
\begin{align*}
\Phi_{s_n}(u_n)
&=\int_0^\infty (1-e^{-tu_n})e^{-\lambda t} 
    \p\big(Q_t \leq s_nG(t) \big)\frac{\D t}{t}.
\end{align*}
For every $\delta>0$ let $\kappa_n\coloneqq G^{-1}(\delta/|s_n|)$ and note that $\kappa_n\da 0$ as $n\to\infty$. The integral in the previous display is split at $\kappa_n$ and we control the two resulting integrals. 

We start with the integral on $[\kappa_n,\infty)$. For any $q\in(0,\alpha)$ we claim that $K\coloneqq\sup_{t\ge 0}\E[|Q_t|^q]<\infty$. Indeed, since $\E[X_t^2]<\infty$,
$t^{-1/2}g(t)Q_t$ converges weakly to a normal random variable as $t\to\infty$. Applying~\cite[Lem.~3.1]{bang2021asymptotic} gives $\sup_{t\ge 1}\E[|Q_t|^q]t^{-q/2}g(t)^q<\infty$, and hence $\sup_{t\ge 1}\E[|Q_t|^q]<\infty$ since $t^{-1/2}g(t)$ is bounded from below for $t\ge 1$. Similarly,~\cite[Lem.~4.8--4.9]{MR4161123} imply that $\sup_{t\le 1}\E[|Q_t|^q]<\infty$, and thus $K<\infty$. Markov's inequality then yields 
\begin{equation}
\label{eq:upper_bound_stable_prob}
K\ge \sup_{n\in\N}\sup_{t\ge\kappa_n}
    |s_n|^{q}G(t)^{q}
        \p(Q_t\le s_nG(t)).
\end{equation}
Let $q'\coloneqq q(1-1/\alpha)>0$ and note that $G(t)^{-q}$ is regularly varying at $0$ with index $-q'$. By~\eqref{eq:upper_bound_stable_prob} we have $\p(Q_t\le s_nG(t))\le K|s_n|^{-q}G(t)^{-q}$ for all $t\ge \kappa_n$ and $n\in\N$. Hence, Karamata's theorem~\cite[Thm~1.5.11]{MR1015093} gives
\begin{align*}
\int_{\kappa_n}^\infty (1-e^{-u_nt})e^{-\lambda t} 
    \p\big(Q_t \le s_nG(t)\big)\frac{\D t}{t}
&\le K\int_{\kappa_n}^\infty |s_n|^{-q}e^{-\lambda t}G(t)^{-q} \frac{\D t}{t}\\
&\sim \frac{K}{q'}|s_n|^{-q} G(\kappa_n)^{-q}
=\frac{K}{q'\delta^q}
<\infty,
\quad\text{as }n\to\infty.
\end{align*}
Thus, the integral $\int_{\kappa_n}^\infty (1-e^{-u_nt})e^{-\lambda t} 
    \p\big(Q_t \le s_nG(t)\big)t^{-1}\D t$ is bounded as $n\to\infty$.

It remains to establish the asymptotic growth of the corresponding integral on $(0,\kappa_n)$. Since the limiting $\alpha$-stable random variable $Z$ has a bounded density (see, e.g.~\cite[Ch.~4]{MR1745764}), the weak convergence of $Q_t \cid Z$ as $t \da 0$ extends to convergence in Kolmogorov distance by~\cite[1.8.31--32, p.~43]{MR1353441}. Thus, there exists some $N_\delta\in\N$ such that 
\[
\sup_{n\ge N_\delta}\sup_{t\in[0,\kappa_n]}
    |\p(Q_t\le s_n G(t))-\p(Z\le s_nG(t))|<\delta.
\]
Since $G(\kappa_n)=\delta/|s_n|$ and $\p(Z\le 0)=1-\rho$, 
the triangle inequality yields
\[
B_\delta
\coloneqq\sup_{n\ge N_\delta}\sup_{t\in[0,\kappa_n]}
|1-\rho - \p(Q_t\le s_n G(t))|
\le |1-\rho - \p(Z\le-\delta)|+\delta.
\]
which tends to $0$ as $\delta\da 0$.

Define $\xi_n\coloneqq G^{-1}(1/|s_n|)$ for and note from the regular variation of $G^{-1}$ that $\kappa_n/\xi_n\to \delta^{\alpha/(\alpha-1)}$ as $n\to\infty$, implying  $\log(u_n\kappa_n)\sim\log(u_n\xi_n)$ as $n \to \infty$ since $u_n\xi_n\to\infty$. As in the proof of Lemma~\ref{lem:asymp_equiv_Psi_domstable_post_min} above, we have $\int_0^1 (1-e^{-xt})t^{-1}\D t \sim \log x$ as $x\to\infty$. Since $u_n\xi_n\to\infty$ and $\xi_n\da 0$ as $n\to\infty$, we have 
\begin{align*}
\int_0^{\kappa_n} (1-e^{-u_nt})e^{-\lambda t} 
    \p\big(Q_t \le s_nG(t)\big)\frac{\D t}{t}
&\le (1-\rho+B_\delta)
    \int_0^{\kappa_n} (1-e^{-u_nt})e^{-\lambda t}
        \frac{\D t}{t}\\
&\sim(1-\rho+B_\delta)\log(u_n\xi_n), \quad \text{as }n \to \infty.
\end{align*} 
This implies that $\limsup_{n\to\infty}\Phi_{s_n}(u_n)/\log(u_n\xi_n)\le 1-\rho+B_\delta$. A similar argument can be used to obtain  $\liminf_{n\to\infty}\Phi_{s_n}(u_n)/\log(u_n\xi_n)\ge 1-\rho-B_\delta$. Since $\delta>0$ is arbitrary and $B_\delta\da 0$ as $\delta\da 0$, we deduce that $\Phi_{s_n}(u_n)\sim(1-\rho)\log(u_n\xi_n)$ as $n\to\infty$. 

Part (ii). We will bound each of the terms in $\Phi_{s_n}(u_n)=J_n^1+J_n^2+J_n^3$, where $\xi_n=G^{-1}(1/|s_n|)$ and
\begin{gather*}
J_n^1\coloneqq
\int_0^{\xi_n} (1-e^{-u_nt})e^{-\lambda t} \p(X_t\leq s_n t)\frac{\D t}{t}, 
\qquad
J_n^2\coloneqq
\int_{1}^\infty (1-e^{-u_nt})e^{-\lambda t} \p(X_t\leq s_n t)\frac{\D t}{t},\\
\text{and}\qquad
J_n^3\coloneqq
\int_{\xi_n}^{1} (1-e^{-u_nt})e^{-\lambda t} \p(Q_t\leq s_n G(t))\frac{\D t}{t}.
\end{gather*} 
The elementary inequality $1-e^{-x}\le x$ for $x\ge 0$ implies that the integrand of $J_n^1$ is bounded by $u_n$. Hence, we have $J_n^1=\Oh(u_n\xi_n)=\Oh((u_n\xi_n)^{(\alpha-1)/2})$ as $n\to\infty$. 

To bound $J_n^2$, we use Markov's inequality as follows: since $\E[X_t^2]
= \E[X_1^2]t$ for all $t>0$, we have
$\p(X_t\le s_n t)
\le \E[X_1^2]t/(|s_n|^2 t^2)
=\E[X_1^2]|s_n|^{-2} t^{-1}$, for all $n\in\N$, $t>0$.
Thus, we get
\begin{align*}
J_n^2
\le \frac{\E[X_1^2]}{|s_n|^2}\int_{1}^\infty  \frac{\D t}{t^2}
=\frac{\E[X_1^2]}{|s_n|^2}=\Oh(|s_n|^{-2}), \quad \text{ as }n \to \infty.
\end{align*}

It remains to bound $J_n^3$. Let $r\coloneqq(\alpha-1)/2$, pick any $q\in(\alpha/2,\alpha)$ and recall from Part~(i) that $K=\sup_{t\ge 0}\E[|Q_t|^q]<\infty$. Note that $q'=q(1-1/\alpha)>r$, so Karamata's theorem~\cite[Thm~1.5.11]{MR1015093}, the inequality in~\eqref{eq:upper_bound_stable_prob} and the elementary bound $1-e^{-x}\le x^r$ for $x\ge 0$ yield
\begin{align*}
J_n^3
\le Ku_n^r\int_{\xi_n}^{1} 
    t^r|s_n|^{-q} G(t)^{-q}\frac{\D t}{t}
\sim \frac{Ku_n^r}{q'-r}\xi_n^{r}|s_n|^{-q} G(\xi_n)^{-q}
= \frac{K}{q'-r}(u_n\xi_n)^r, \quad \text{ as }n \to \infty.
\end{align*}
We conclude that $J_n^3=\Oh((u_n\xi_n)^r)$ as $n\to\infty$, completing the proof.
\end{proof}

\begin{proof}[Proof of Theorem~\ref{thm:lower_fun_C'}]
Throughout this proof we let $\phi(u)\coloneqq \gamma u^{-1} (\log\log u)^r$, for some $\gamma>0$ and $r \in \R$. By Remark~\ref{rem:modify_nu} we may and do assume without loss of generality that $(X_t)_{t \geq 0}$ has a finite second moment and zero mean.

Part (i). Since $p$ is arbitrary on $(1/(1-\rho),\infty)$, it suffices to show that $\liminf_{t\da 0}|\wh C_t'|f(t)>0$ a.s. where $f(t)=G(t\log^p(1/t))$. Since $\wh C_t'=-1/L_t$, this is equivalent to $\limsup_{t \downarrow 0} L_t/f(t)<\infty$ a.s. Recall that $\Psi_u(w)=\log\E[e^{-wY_u}]=\log\E[e^{-w\wh\tau_{-1/u}}]=\Phi_{-1/u}(w)$ for all $u>0$ and $w\ge 0$. By virtue of Theorem~\ref{thm:limsup_L}(a), it suffices to show that $\sum_{n=1}^\infty \exp(\theta_n t_n-\Psi_{f(t_n)}(\theta_n))<\infty$ and $\limsup_{n \to \infty} f(t_n)/f(t_{n+1})<\infty$ for $t_n\coloneqq \phi(\theta_n)$ and a positive sequence $(\theta_n)_{n \in \N}$ with $\lim_{n\to \infty}\theta_n=\infty$.

Let $\theta_n\coloneqq e^n$ and $r=0$. Note that the regular variation of $f$ at $0$ yields $\limsup_{n \to \infty}f(t_n)/f(t_{n+1})=\lim_{n \to \infty}f(t_n)/f(t_{n+1})= e^{1-1/\alpha}$.
Thus, it suffices to prove that the series is finite. Since $t_n=\phi(\theta_n)$, it follows that $t_n\theta_n=\gamma$. Note from the definition of $f$ that 
\begin{equation}
\label{eq:stable_f_asymp4}
uG^{-1}(f(\phi(u)))
=u \phi(u)(\log (\phi(u)^{-1}))^{p}
=\gamma (\log (\gamma^{-1} u))^{p}
\sim\gamma (\log u)^{p}\to\infty,
\quad \text{ as }u \to \infty.
\end{equation}
By Lemma~\ref{lem:asymp_equiv_Phi_domstable}(i) we have $\Psi_{f(t_n)}(\theta_n)=\Phi_{-1/f(t_n)}(\theta_n)\sim (1-\rho)\log(\theta_n G^{-1}(f(t_n)))$ as $n \to \infty$, since $\theta_n G^{-1}(f(t_n)) \sim
\gamma(\log\theta_n)^{p}\to \infty$ as $n \to \infty$ by~\eqref{eq:stable_f_asymp4}. 

Fix some $\ve>0$ with $(1-\ve)(1-\rho)p>1$. Note that we have $\Psi_{f(t_n)}(\theta_n)
\ge (1-\ve)(1-\rho)p\log\log\theta_n$ for all sufficiently large $n$. It is enough to show that the following sum is finite:
\[
\sum_{n=1}^\infty \exp\big(\gamma 
-(1-\ve)(1-\rho)p\log\log\theta_n\big).
\]
Since $(1-\ve)(1-\rho)p>1$, this sum is bounded by a multiple of $\sum_{n=1}^\infty n^{-(1-\ve)(1-\rho)p}<\infty$. 

Part (ii). As before, since $p$ is arbitrary in $(0,1/(1-\rho))$, it suffices to show $\liminf_{t\da 0}|\wh C'_t|f(t)<\infty$ a.s. By Theorem~\ref{thm:limsup_L}(b), it suffices to show that there exists some $r>0$ and a positive sequence $(\theta_n)_{n \in \N}$ satisfying $\lim_{n\to \infty}\theta_n=\infty$, such that $\sum_{n=1}^\infty (\exp(-\Psi_{f(t_n)}(\theta_n))-\exp(-\theta_n t_n))=\infty$ and $\sum_{n=1}^\infty \Psi_{f(t_{n+1})}( \theta_n)<\infty$. 

Let $\gamma=r=1$, choose $\sigma>1$ and $\ve>0$ satisfying $\sigma(1+\ve)p(1-\rho)<1$ (recall $p(1-\rho)<1$) and set $\theta_n\coloneqq e^{n^\sigma}$. We start by showing that the second sum is finite. Since $\sigma>1$,~\eqref{eq:stable_f_asymp4} yields
\begin{equation}
\label{eq:stable_f_asymp3}
\theta_nG^{-1}(f(t_{n+1}))\sim \frac{\theta_n}{\theta_{n+1}}(\log\theta_{n+1})^{p}\da 0, \quad \text{ as }n \to \infty.
\end{equation} 
Hence, the time-change $\wh C'_t=-1/L_t$, Lemma~\ref{lem:asymp_equiv_Phi_domstable}(ii) and~\eqref{eq:stable_f_asymp3} imply 
\begin{align*}
\Psi_{f(t_{n+1})}(\theta_n)
=\Phi_{-1/f(t_{n+1})}(\theta_n)
&=\Oh\big([\theta_nG^{-1}(f(t_{n+1}))]^{(\alpha-1)/2}+f(t_{n+1})^2\big),
\quad\text{as }n\to\infty.
\end{align*} 
By~\eqref{eq:stable_f_asymp3}, it is enough to show that
\begin{align*}
\sum_{n=1}^\infty \bigg(\frac{\theta_n}{\theta_{n+1}}(\log\theta_{n+1})^{p}\log\log\theta_{n+1}\bigg)^{(\alpha-1)/2}<\infty,
\qquad\text{and}\qquad
\sum_{n=1}^\infty f(t_{n+1})^2<\infty.
\end{align*}
Newton's generalised binomial theorem implies that $\theta_n/\theta_{n+1}=\exp(n^\sigma-(n+1)^\sigma)\le \exp(-\sigma n^{\sigma-1}/2)$ for all sufficiently large $n$. Since $\log\theta_{n+1}\sim n^\sigma$, we conclude that the first series in the previous display is indeed finite. The second series is also finite since $f\circ h$ is regularly varying at infinity with index $-(\alpha-1)/\alpha$ (recall that $t_{n+1}=\phi(\theta_{n+1})$). 

Next we prove that $\sum_{n=1}^\infty (\exp(-\Psi_{f(t_n)}(\theta_n))-\exp(-\theta_n t_n))=\infty$. First observe that the terms $\exp(-\theta_nt_n)=\exp(-\log \log\theta_n)=n^{-\sigma}$ are summable. Applying Lemma~\ref{lem:asymp_equiv_Phi_domstable}(i) and~\eqref{eq:stable_f_asymp4}, we obtain $\Psi_{f(t_n)}(\theta_n)\sim (1-\rho)\log(\theta_n G^{-1}(f(t_n)))$ as $n \to \infty$. As in Part~(i), for all sufficiently large~$n$ we have 
$\Psi_{f(t_n)}(\theta_n)
\le (1+\ve)p(1-\rho)
    \log\log\theta_n$. Thus 
$\exp(-\Psi_{f(t_{n})}(\theta_n))
\ge n^{-\sigma(1+\ve)p(1-\rho)}$ and, since $\sigma(1+\ve)p(1-\rho)<1$, we deduce that $\sum_{n=1}^\infty \exp(-\Psi_{f(t_n)}(\theta_n))=\infty$, completing the proof.
\end{proof}

\subsection{Proofs of Subsection~\ref{subsec:applications}}
In this subsection we prove the results stated in Subsection~\ref{subsec:applications}.

\begin{proof}[Proofs of Lemmas~\ref{lem:upper_fun_Lev_path_post_slope} and~\ref{lem:upper_fun_Lev_path}]
We first prove Lemma~\ref{lem:upper_fun_Lev_path_post_slope}.
Let $s\in\mL^+(\mS)$ and let the function $f:[0,\infty) \to [0,\infty)$ be continuous and increasing with $f(0)=0$ and define the function $\tilde f(t)\coloneqq\int_0^t f(u)\D u$, $t\ge 0$. Note that $m_s= X_{\tau_s}\wedge X_{\tau_s-}$ equals $C_{\tau_s}$ since $\tau_s$ is a contact point between $t\mapsto X_t\wedge X_{t-}$ and its convex minorant $C$. 

Part (i). By assumption, for any $M>0$ there exists $\delta>0$ such that $C'_{t+\tau_s}-s \ge Mf(t)$ for $t\in (0,\delta)$. Since $\int_0^t (C'_{u+\tau_s}-s)\D u=C_{t+\tau_s}-m_s-st$ it follows that $C_{t+\tau_s}-m_s-st\ge M \tilde f(t)$ for all $t \in [0,\delta)$. Note that the path of $X$ stays above its convex minorant, implying $C_{t+\tau_s}-m_s-st\le X_{t+\tau_s}-m_s-st$. Thus, $X_{t+\tau_s}-m_s-st\ge M \tilde f(t)$ for all $t \in [0,\delta)$, implying that $\liminf_{t \da 0}(X_{t+\tau_s}-m_s-st)/\tilde f(t)\ge M$.

Part (ii). Assume that $\tilde f$ is convex on a neighborhood of $0$, and that $\limsup_{t\da 0} (C'_{t+\tau_s}-s)/f(t)=0$. Then, for all $M>0$ there exists some $\delta>0$ such that $C'_{t+\tau_s}-s\le M f(t)$ for all $t\in [0,\delta)$. Integrating this inequality gives $C_{t+\tau_s}-m_s-st\le M \tilde f(t)$ for all $t \in [0,\delta)$. Since $s\in\mL^+(\mS)$, there exists a decreasing sequence of slopes $s_n\da s$ such that $t_n=\tau_{s_n}-\tau_s\da 0$ and $X_{t_n+\tau_s}\wedge X_{t_n+\tau_s-}=C_{t_n+\tau_s}$ for all $n\in\N$. Thus, either $X_{t_n+\tau_s}-m_s-st_n \le M\tilde f(t_n)$ i.o. or $X_{t_n+\tau_s-}-m_s-st_n \le M\tilde f(t_n)$ i.o. Since $\tilde f$ is continuous, we deduce that  $\liminf_{t \da 0}(X_{t+\tau_s}-m_s-st)/\tilde f(t)\le M$. 

The proof of Lemma~\ref{lem:upper_fun_Lev_path} follows along similar lines with $\tilde f(t)=\int_0^t f(u)^{-1}\D u$, $t>0$, the slope $s=-\infty$ and $m_{-\infty}=X_0=0$.
\end{proof}

\begin{proof}[Proof of Corollary~\ref{cor:post-tau_s-Levy-path-attraction}]
Part (i) follows from Theorem~\ref{thm:upper_fun_C'_post_min} and Lemma~\ref{lem:upper_fun_Lev_path_post_slope}(ii). 

Part (ii). Assume $\alpha\in(1/2,1)$. By Theorem~\ref{thm:post-min-lower} and Lemma~\ref{lem:upper_fun_Lev_path_post_slope}(i) it suffices to prove that \eqref{eq:post-min-Pi-large}--\eqref{eq:post-min-Pi-mean} hold for $c=1$. As described in Subsection~\ref{subsec:simp_suff_cond_tau_s},  condition~\eqref{eq:simple_suff_cond_post_min_density} implies~\eqref{eq:post-min-Pi-var}--\eqref{eq:post-min-Pi-mean}. By Lemma~\ref{lem:generalized_Picard}, the density of $(X_t-st)/g(t)$ is uniformly bounded in $t>0$. Hence, the following condition implies~\eqref{eq:simple_suff_cond_post_min_density}:
\begin{equation}
\label{eq:cor_RV_suff1}
\int_0^1\int_{f(t/2)}^1 \frac{1}{f^{-1}(x)} \D x \frac{t}{g(t)}\D t<\infty.
\end{equation} 
Similarly,~\eqref{eq:post-min-Pi-large} holds with $c=1$ if $\int_0^1 (f(t)/g(t))\D t<\infty$.
Thus, it remains to show that~\eqref{eq:cor_RV_suff1} holds and 
$\int_0^1 (f(t)/g(t))\D t<\infty$.

We first establish~\eqref{eq:cor_RV_suff1}. Let $a=\alpha/(1-\alpha)$ and note that $f(t)\coloneqq 1/G(t(\log t^{-1})^p)=t^{1/a}\wt\varpi(t)$ where the slowly varying function $\wt\varpi$ is given by $\wt\varpi(t)=\log^{p/a}(1/t)\varpi(t\log^{p}(1/t))$. Thus, by~\cite[Thm~1.5.12]{MR1015093}, the inverse $f^{-1}$ of $f$ admits the representation $f^{-1}(t)=t^a \wh\varpi(t)$ for some slowly varying function $\wh\varpi(t)$. This slowly varying function satisfies 
\begin{equation}
\label{eq:hat_varpi}
t=f^{-1}(f(t))
=f(t)^a\wh\varpi(f(t))
\implies \wh\varpi(f(t))\sim t/f(t)^a\sim 1/\wt\varpi(t)^a, \qquad \text{ as } t \da 0.
\end{equation}

Since $a>1$, the function $f^{-1}$ is not integrable at $0$. Thus, by Karamata's theorem~\cite[Thm~1.5.11]{MR1015093} and~\eqref{eq:hat_varpi}, the inner integral in~\eqref{eq:cor_RV_suff1} satisfies 
\[
\int_{f(t/2)}^1\frac{1}{f^{-1}(x)}\D x
\sim \frac{1}{a-1}f(t/2)^{1-a}\wh\varpi(f(t))^{-1}
\sim \frac{2^{(a-1)/a}}{a-1}f(t)^{1-a}\wt\varpi(t)^a,
\qquad\text{as }t\da 0.
\]
Since $t/g(t)=t^{-1/a}/\varpi(t)$ for $t>0$, condition~\eqref{eq:cor_RV_suff1} holds if and only if the following integral is finite
\begin{equation*}
\int_0^1 f(t)^{1-a} \frac{\wt\varpi(t)^a}{\varpi(t)} t^{-1/a}\D t
=\int_0^1 \log^{p/a}(1/t)\frac{\varpi(t\log^{p}(1/t))}{\varpi(t)}\frac{\D t}{t}.
\end{equation*}
The integrand is asymptotically equivalent to $\log^{p/a}(1/t)$ since $\varpi(t\log^{p}(1/t))/\varpi(t) \to 1$ as $t \da 0$ uniformly on $[0,1]$ by~\cite[Thm~2.3.1]{MR1015093} and our assumption on $\varpi$. Thus, the condition $p<-a$ makes the integral in display finite, proving condition~\eqref{eq:cor_RV_suff1}.

To prove that $\int_0^1 (f(t)/g(t))\D t<\infty$, take any $\delta>0$ with $p(1/a-\delta)<-1$ (recall $p/a<-1$ by assumption) and apply Potter's bound \cite[Thm~1.5.6(iii)]{MR1015093} with $\delta$ to obtain, for some constant $K>0$,
\[
\int_0^1 \frac{f(t)}{g(t)}\D t=\int_0^1 \frac{g(t\log^p(1/t))}{g(t)\log^p(1/t)}\frac{\D t}{t} \le K\int_0^1 \log^{p(1/a-\delta)}(1/t)\frac{\D t}{t}<\infty.
\]

Part (iii). The result follows from  Corollary~\ref{cor:power_func_liminf_post_min} and Lemma~\ref{lem:upper_fun_Lev_path_post_slope}(i). 
\end{proof}

\section{Concluding remarks}
\label{sec:concluding_rem}

The points on the boundary of the convex hull of a L\'evy path where the slope increases continuously were characterised (in terms of the law of the process) in our recent paper~\cite{SmoothCM}. In this paper we address the question of the rate of increase for the derivative of the boundary at these points in terms of lower and upper functions, both when the tangent has finite slope and when it is vertical (i.e. of infinite slope). Our results cover a large class of L\'evy processes, presenting a comprehensive picture of this behaviour. Our aim was not to provide the best possible result in each case and indeed many extensions and refinements are possible. Below we list a few that arose while discussing our results in Section~\ref{sec:small-time-derivative} as well as other natural questions.

\begin{itemize}[leftmargin=2em, nosep]
\item Find an explicit description of the lower (resp. upper) fluctuations in the finite (resp. infinite) slope regime for L\'evy processes in the domain of attraction of an $\alpha$-stable process in terms of the normalising function (cf. Corollaries~\ref{cor:power_func_liminf_post_min} and~\ref{cor:stable_limsup}). In the finite slope regime, this appears to require a refinement of~\cite[Thm~4.3]{picard_1997} for processes in this class. 
\item In Theorems~\ref{thm:upper_fun_C'_post_min} and~\ref{thm:lower_fun_C'} we find the correct power of the logarithmic factor, in terms of the positivity parameter $\rho$, in the definition of the function $f$ for processes in the domain of attraction of an $\alpha$-stable process. It is natural to ask what powers of iterated logarithm arise and how the boundary value is linked to the characteristics of the L\'evy process. This question might be tractable for $\alpha$-stable  processes since power series and other formulae exist for their transition densities~\cite[Sec.~4]{MR1745764}, allowing higher order control of the Laplace transform $\Phi$ in Lemmas~\ref{lem:asymp_equiv_Psi_domstable_post_min} and~\ref{lem:asymp_equiv_Phi_domstable}.
\item Find the analogue of Theorems~\ref{thm:upper_fun_C'_post_min} and~\ref{thm:lower_fun_C'} for processes attracted to Cauchy process (see Remarks~\ref{rem:exclusions-tau}(a) and~\ref{rem:exclusions-0}(b) for details). 
\item Find L\'evy processes for which  there exists a deterministic function $f$ such that any of the following limits is positive and finite: $\limsup_{t\da 0}(C'_{t+\tau_s}-s)/f(t)$, $\liminf_{t\da 0}(C'_{t+\tau_s}-s)/f(t)$, $\limsup_{t\da 0}|C'_{t}|f(t)$ or $\liminf_{t\da 0}|C'_{t}|f(t)$.  By Corollaries~\ref{cor:power_func_liminf_post_min} and~\ref{cor:stable_limsup}, such a function does not exist for the limits $\liminf_{t\da 0}(C'_{t+\tau_s}-s)/f(t)$ or $\limsup_{t\da 0}|C'_{t}|f(t)$ within the class of regularly varying functions and $\alpha$-stable processes with jumps of both signs.
\end{itemize}

\printbibliography

@article{SmoothCM,
AUTHOR = {Bang, David and Gonz\'alez C\'azares, Jorge and Mijatovi\'c, Aleksandar},
TITLE = {When is the convex minorant of a {L}\'evy path smooth?},
YEAR = {2022},
eprint={2205.14416v2},
archivePrefix={arXiv},
primaryClass={math.PR}
}

@article {MR3784492,
    AUTHOR = {Ivanovs, Jevgenijs},
     TITLE = {Zooming in on a {L}\'{e}vy process at its supremum},
   JOURNAL = {Ann. Appl. Probab.},
  FJOURNAL = {The Annals of Applied Probability},
    VOLUME = {28},
      YEAR = {2018},
    NUMBER = {2},
     PAGES = {912--940},
      ISSN = {1050-5164},
   MRCLASS = {60G51 (60F17 60G18 60G52)},
  MRNUMBER = {3784492},
       DOI = {10.1214/17-AAP1320},
       URL = {https://doi.org/10.1214/17-AAP1320},
}

@book {MR3185174,
    AUTHOR = {Sato, Ken-iti},
     TITLE = {{L}\'{e}vy processes and infinitely divisible distributions},
    SERIES = {Cambridge Studies in Advanced Mathematics},
    VOLUME = {68},
      NOTE = {Translated from the 1990 Japanese original,
              Revised edition of the 1999 English translation},
 PUBLISHER = {Cambridge University Press, Cambridge},
      YEAR = {2013},
     PAGES = {xiv+521},
      ISBN = {978-1-107-65649-9},
   MRCLASS = {60G51 (60E07 60G18 60G52 60J45)},
  MRNUMBER = {3185174},
}

@article {MR3160578,
    AUTHOR = {Uribe Bravo, Ger\'{o}nimo},
     TITLE = {Bridges of {L}\'{e}vy processes conditioned to stay positive},
   JOURNAL = {Bernoulli},
  FJOURNAL = {Bernoulli. Official Journal of the Bernoulli Society for
              Mathematical Statistics and Probability},
    VOLUME = {20},
      YEAR = {2014},
    NUMBER = {1},
     PAGES = {190--206},
      ISSN = {1350-7265},
   MRCLASS = {60G51 (60J25)},
  MRNUMBER = {3160578},
MRREVIEWER = {Barbara Martinucci},
       DOI = {10.3150/12-BEJ481},
       URL = {https://doi.org/10.3150/12-BEJ481},
}

@Misc{Presentation_AM,
   author = {Bang, David and Gonz\'alez C\'azares, Jorge I. and Mijatovi\'c, Aleksandar},
   title  = {Presentation on ``How smooth can a convex hull of a Levy path be?''},
   howpublished = {\url{https://youtu.be/9uCge3eMHQg}},
   year   = {2022},
   note   = {YouTube video},
}

@article {MR210190,
    AUTHOR = {Fristedt, Bert E.},
     TITLE = {Sample function behavior of increasing processes with
              stationary, independent increments},
   JOURNAL = {Pacific J. Math.},
  FJOURNAL = {Pacific Journal of Mathematics},
    VOLUME = {21},
      YEAR = {1967},
     PAGES = {21--33},
      ISSN = {0030-8730},
   MRCLASS = {60.60},
  MRNUMBER = {210190},
MRREVIEWER = {P. Whittle},
       URL = {http://projecteuclid.org/euclid.pjm/1102992598},
}

@article {MR968135,
    AUTHOR = {Wee, In Suk and Kim, Yun Kyong},
     TITLE = {General laws of the iterated logarithm for {L}\'{e}vy processes},
   JOURNAL = {J. Korean Statist. Soc.},
  FJOURNAL = {Journal of the Korean Statistical Society},
    VOLUME = {17},
      YEAR = {1988},
    NUMBER = {1},
     PAGES = {30--45},
      ISSN = {1226-3192},
   MRCLASS = {60J30 (60F15)},
  MRNUMBER = {968135},
MRREVIEWER = {Mamoru Kanda},
}

@article {MR292163,
    AUTHOR = {Fristedt, Bert E. and Pruitt, William E.},
     TITLE = {Lower functions for increasing random walks and subordinators},
   JOURNAL = {Z. Wahrscheinlichkeitstheorie und Verw. Gebiete},
  FJOURNAL = {Zeitschrift f\"{u}r Wahrscheinlichkeitstheorie und Verwandte
              Gebiete},
    VOLUME = {18},
      YEAR = {1971},
     PAGES = {167--182},
   MRCLASS = {60J15},
  MRNUMBER = {292163},
MRREVIEWER = {J. Hawkes},
       DOI = {10.1007/BF00563135},
       URL = {https://doi.org/10.1007/BF00563135},
}

@misc{Franziska_Kuhn,
  doi = {10.48550/ARXIV.2102.06541},
  
  url = {https://arxiv.org/abs/2102.06541},
  
  author = {Kühn, Franziska},
  
  keywords = {Probability (math.PR), FOS: Mathematics, FOS: Mathematics, 60G17, 60G51, 60G53, 60J76, 47G20},
  
  title = {Upper functions for sample paths of Lévy(-type) processes},
  
  publisher = {arXiv},
  
  year = {2021},
  
  copyright = {arXiv.org perpetual, non-exclusive license}
}

@book {MR1015093,
    AUTHOR = {Bingham, N. H. and Goldie, C. M. and Teugels, J. L.},
     TITLE = {Regular variation},
    SERIES = {Encyclopedia of Mathematics and its Applications},
    VOLUME = {27},
 PUBLISHER = {Cambridge University Press, Cambridge},
      YEAR = {1989},
     PAGES = {xx+494},
      ISBN = {0-521-37943-1},
   MRCLASS = {26A12 (11-01)},
  MRNUMBER = {1015093},
}

@article {MR972777,
    AUTHOR = {Cranston, M. and Hsu, P. and March, P.},
     TITLE = {Smoothness of the convex hull of planar {B}rownian motion},
   JOURNAL = {Ann. Probab.},
  FJOURNAL = {The Annals of Probability},
    VOLUME = {17},
      YEAR = {1989},
    NUMBER = {1},
     PAGES = {144--150},
      ISSN = {0091-1798},
   MRCLASS = {60J65},
  MRNUMBER = {972777},
MRREVIEWER = {S. J. Taylor},
       URL =
              {http://links.jstor.org/sici?sici=0091-1798(198901)17:1<144:SOTCHO>2.0.CO;2-V&origin=MSN},
}

@article {MR3943682,
    AUTHOR = {Beghin, Luisa and Ricciuti, Costantino},
     TITLE = {Time-inhomogeneous fractional {P}oisson processes defined by
              the multistable subordinator},
   JOURNAL = {Stoch. Anal. Appl.},
  FJOURNAL = {Stochastic Analysis and Applications},
    VOLUME = {37},
      YEAR = {2019},
    NUMBER = {2},
     PAGES = {171--188},
      ISSN = {0736-2994},
   MRCLASS = {60G51 (26A33 60J75)},
  MRNUMBER = {3943682},
       DOI = {10.1080/07362994.2018.1548970},
       URL = {https://doi.org/10.1080/07362994.2018.1548970},
}

@article {MR0002054,
    AUTHOR = {Khintchine, A.},
     TITLE = {Sur la croissance locale des processus stochastiques homog\`{e}nes
              \`{a} accroissements ind\'{e}pendants},
   JOURNAL = {Bull. Acad. Sci. URSS. S\'{e}r. Math. [Izvestia Akad. Nauk SSSR]},
    VOLUME = {1939},
      YEAR = {1939},
     PAGES = {487--508},
   MRCLASS = {60.0X},
  MRNUMBER = {0002054},
MRREVIEWER = {J. L. Doob},
}

@article{LevySupSim,
author = {Gonz\'alez C\'azares, Jorge Ignacio and Mijatovi\'c, Aleksandar and Uribe Bravo, Ger\'onimo},
title = {Geometrically Convergent Simulation of the Extrema of L\'evy Processes},
journal = {Mathematics of Operations Research},
volume = {0},
number = {0},
pages = {null},
year = {0},
doi = {10.1287/moor.2021.1163},
URL = { 
        https://doi.org/10.1287/moor.2021.1163
},
eprint = { 
        https://doi.org/10.1287/moor.2021.1163
},
    abstract = { We develop a novel approximate simulation algorithm for the joint law of the position, the running supremum, and the time of the supremum of a general L\'evy process at an arbitrary finite time. We identify the law of the error in simple terms. We prove that the error decays geometrically in Lp (for any p≥1) as a function of the computational cost, in contrast with the polynomial decay for the approximations available in the literature. We establish a central limit theorem and construct nonasymptotic and asymptotic confidence intervals for the corresponding Monte Carlo estimator. We prove that the multilevel Monte Carlo estimator has optimal computational complexity (i.e., of order ϵ−2 if the mean squared error is at most ϵ2) for locally Lipschitz and barrier-type functions of the triplet and develop an unbiased version of the estimator. We illustrate the performance of the algorithm with numerical examples. }
}

@article {MR0123362,
    AUTHOR = {Blumenthal, R. M. and Getoor, R. K.},
     TITLE = {Sample functions of stochastic processes with stationary
              independent increments},
   JOURNAL = {J. Math. Mech.},
    VOLUME = {10},
      YEAR = {1961},
     PAGES = {493--516},
   MRCLASS = {60.60},
  MRNUMBER = {0123362},
MRREVIEWER = {H. P. McKean, Jr.},
}

@article{SoobinKimLeeLIL,
  doi = {10.48550/ARXIV.2102.01917},
  url = {https://arxiv.org/abs/2102.01917},
  author = {Cho, Soobin and Kim, Panki and Lee, Jaehun},
  keywords = {Probability (math.PR), FOS: Mathematics, FOS: Mathematics, 60J25, 60J35, 60J76, 60F15, 60F20},
  title = {General Law of iterated logarithm for Markov processes: Limsup law},
  publisher = {arXiv},
  year = {2021},
  copyright = {arXiv.org perpetual, non-exclusive license}
}

@article{picard_1997, 
title={Density in small time for {L}\'{e}vy processes}, 
volume={1}, 
DOI={10.1051/ps:1997114}, 
journal={ESAIM: Probability and Statistics}, 
publisher={EDP Sciences}, 
author={Picard, Jean}, 
year={1997}, 
pages={357-389}
}

@article{fluctuation_levy,
      title={Convex minorants and the fluctuation theory of {L}\'evy processes}, 
      author={Gonz\'alez C\'azares, Jorge Ignacio and Mijatovi\'c, Aleksandar},
      year={2022},
      journal = {To appear in ALEA},
      eprint={2105.15060},
      archivePrefix={arXiv},
      primaryClass={math.PR}
}

@Book{MR1353441,
  title      = {Limit theorems of probability theory},
  publisher  = {The Clarendon Press, Oxford University Press, New York},
  year       = {1995},
  author     = {Petrov, Valentin V.},
  volume     = {4},
  series     = {Oxford Studies in Probability},
  note       = {Sequences of independent random variables, Oxford Science Publications},
  isbn       = {0-19-853499-X},
  mrclass    = {60F15 (60F05)},
  mrnumber   = {1353441},
  mrreviewer = {Lajos Horv\~A!`th},
  pages      = {xii+292},
}

@article {MR1113220,
    AUTHOR = {Sato, Ken-iti},
     TITLE = {Self-similar processes with independent increments},
   JOURNAL = {Probab. Theory Related Fields},
  FJOURNAL = {Probability Theory and Related Fields},
    VOLUME = {89},
      YEAR = {1991},
    NUMBER = {3},
     PAGES = {285--300},
      ISSN = {0178-8051},
   MRCLASS = {60G18 (60E07)},
  MRNUMBER = {1113220},
MRREVIEWER = {A. Weron},
       DOI = {10.1007/BF01198788},
       URL = {https://doi.org/10.1007/BF01198788},
}

@Book{MR1745764,
  title      = {Chance and stability},
  publisher  = {VSP, Utrecht},
  year       = {1999},
  author     = {Uchaikin, Vladimir V. and Zolotarev, Vladimir M.},
  series     = {Modern Probability and Statistics},
  note       = {Stable distributions and their applications, With a foreword by V. Yu. Korolev and Zolotarev},
  doi        = {10.1515/9783110935974},
  isbn       = {90-6764-301-7},
  mrclass    = {60-02 (60E07 60E10 60F05 60G15 60G50 60G52 62P99)},
  mrnumber   = {1745764},
  mrreviewer = {Tomasz J. Kozubowski},
  pages      = {xxii+570},
  url        = {http://dx.doi.org/10.1515/9783110935974},
}

@InCollection{MR1746300,
  author    = {Bertoin, Jean},
  title     = {Subordinators: examples and applications},
  booktitle = {Lectures on probability theory and statistics ({S}aint-{F}lour, 1997)},
  publisher = {Springer, Berlin},
  year      = {1999},
  volume    = {1717},
  series    = {Lecture Notes in Math.},
  pages     = {1--91},
  doi       = {10.1007/978-3-540-48115-7_1},
  mrclass   = {60-02 (60G17 60G51 60H10 60H15 60J55 60J65)},
  mrnumber  = {1746300},
  url       = {http://dx.doi.org/10.1007/978-3-540-48115-7_1},
}

@article{pelekis2017lower,
      title={Lower bounds on binomial and Poisson tails: an approach via tail conditional expectations}, 
      author={Christos Pelekis},
      year={2017},
      eprint={1609.06651},
      archivePrefix={arXiv},
      primaryClass={math.PR}
}

@Book{MR1876169,
  title      = {Foundations of modern probability},
  publisher  = {Springer-Verlag, New York},
  year       = {2002},
  author     = {Kallenberg, Olav},
  series     = {Probability and its Applications (New York)},
  edition    = {Second},
  doi        = {10.1007/978-1-4757-4015-8},
  isbn       = {0-387-95313-2},
  keywords   = {Probability, Processes, Stochastic},
  mrclass    = {60-01},
  mrnumber   = {1876169},
  mrreviewer = {Klaus D. Schmidt},
  pages      = {xx+638},
  url        = {http://dx.doi.org/10.1007/978-1-4757-4015-8},
}

@article {MR1947963,
    AUTHOR = {Vigon, Vincent},
     TITLE = {Abrupt {L}\'{e}vy processes},
   JOURNAL = {Stochastic Process. Appl.},
  FJOURNAL = {Stochastic Processes and their Applications},
    VOLUME = {103},
      YEAR = {2003},
    NUMBER = {1},
     PAGES = {155--168},
      ISSN = {0304-4149},
   MRCLASS = {60G51 (60G17 60G52)},
  MRNUMBER = {1947963},
MRREVIEWER = {Thomas D. Simon},
       DOI = {10.1016/S0304-4149(02)00186-2},
       URL = {https://doi.org/10.1016/S0304-4149(02)00186-2},
}

@incollection {MR4237257,
    AUTHOR = {Kern, Peter and Lage, Svenja},
     TITLE = {Space-time duality for semi-fractional diffusions},
 BOOKTITLE = {Fractal geometry and stochastics {VI}},
    SERIES = {Progr. Probab.},
    VOLUME = {76},
     PAGES = {255--272},
 PUBLISHER = {Birkh\"{a}user/Springer, Cham},
      YEAR = {2021},
   MRCLASS = {35R11 (26A33 60G18 60G22 60G51)},
  MRNUMBER = {4237257},
       DOI = {10.1007/978-3-030-59649-1\_11},
       URL = {https://doi.org/10.1007/978-3-030-59649-1_11},
}

@article {MR2218871,
    AUTHOR = {Fourati, S.},
     TITLE = {Inversion de l'espace et du temps des processus de {L}\'{e}vy
              stables},
   JOURNAL = {Probab. Theory Related Fields},
  FJOURNAL = {Probability Theory and Related Fields},
    VOLUME = {135},
      YEAR = {2006},
    NUMBER = {2},
     PAGES = {201--215},
      ISSN = {0178-8051},
   MRCLASS = {60G51 (60G52)},
  MRNUMBER = {2218871},
MRREVIEWER = {Rama Cont},
       DOI = {10.1007/s00440-005-0455-2},
       URL = {https://doi.org/10.1007/s00440-005-0455-2},
}

@book {MR0029120,
    AUTHOR = {L\'{e}vy, Paul},
     TITLE = {Processus {S}tochastiques et {M}ouvement {B}rownien. {S}uivi
              d'une note de {M}. {L}o\`eve},
 PUBLISHER = {Gauthier-Villars, Paris},
      YEAR = {1948},
     PAGES = {365},
   MRCLASS = {60.0X},
  MRNUMBER = {0029120},
MRREVIEWER = {S. Kakutani},
}

@Article{MR2978134,
  author =     {Pitman, Jim and Uribe Bravo, Ger\'{o}nimo},
  title =      {The convex minorant of a {L}\'{e}vy process},
  journal =    {Ann. Probab.},
  year =       {2012},
  volume =     {40},
  number =     {4},
  pages =      {1636--1674},
  fjournal =   {The Annals of Probability},
  issn =       {0091-1798},
  mrclass =    {60G51},
  mrnumber =   {2978134},
  mrreviewer = {Ross S. McVinish},
  url =        {https://doi.org/10.1214/11-AOP658}
}

@article {MR1747095,
    AUTHOR = {Bertoin, Jean},
     TITLE = {The convex minorant of the {C}auchy process},
   JOURNAL = {Electron. Comm. Probab.},
  FJOURNAL = {Electronic Communications in Probability},
    VOLUME = {5},
      YEAR = {2000},
     PAGES = {51--55},
      ISSN = {1083-589X},
   MRCLASS = {60G52},
  MRNUMBER = {1747095},
MRREVIEWER = {M. G. Shur},
       DOI = {10.1214/ECP.v5-1017},
       URL = {https://doi.org/10.1214/ECP.v5-1017},
}

@book {MR1739699,
    AUTHOR = {Nagasawa, Masao},
     TITLE = {Stochastic processes in quantum physics},
    SERIES = {Monographs in Mathematics},
    VOLUME = {94},
 PUBLISHER = {Birkh\"{a}user Verlag, Basel},
      YEAR = {2000},
     PAGES = {xviii+598},
      ISBN = {3-7643-6208-1},
   MRCLASS = {60H10 (81-02 81P20 81S25)},
  MRNUMBER = {1739699},
MRREVIEWER = {David Applebaum},
       DOI = {10.1007/978-3-0348-8383-2},
       URL = {https://doi.org/10.1007/978-3-0348-8383-2},
}

@article {MR2370602,
    AUTHOR = {Bertoin, J. and Doney, R. A. and Maller, R. A.},
     TITLE = {Passage of {L}\'{e}vy processes across power law boundaries at
              small times},
   JOURNAL = {Ann. Probab.},
  FJOURNAL = {The Annals of Probability},
    VOLUME = {36},
      YEAR = {2008},
    NUMBER = {1},
     PAGES = {160--197},
      ISSN = {0091-1798},
   MRCLASS = {60G51 (60F15 60G17 60J65 60J75 91B28)},
  MRNUMBER = {2370602},
MRREVIEWER = {Giuseppe Castellacci},
       DOI = {10.1214/009117907000000097},
       URL = {https://doi.org/10.1214/009117907000000097},
}

@article {MR3019488,
    AUTHOR = {Aurzada, Frank and D\"{o}ring, Leif and Savov, Mladen},
     TITLE = {Small time {C}hung-type {LIL} for {L}\'{e}vy processes},
   JOURNAL = {Bernoulli},
  FJOURNAL = {Bernoulli. Official Journal of the Bernoulli Society for
              Mathematical Statistics and Probability},
    VOLUME = {19},
      YEAR = {2013},
    NUMBER = {1},
     PAGES = {115--136},
      ISSN = {1350-7265},
   MRCLASS = {60G51 (60F15)},
  MRNUMBER = {3019488},
MRREVIEWER = {Mikhail A. Lifshits},
       DOI = {10.3150/11-BEJ395},
       URL = {https://doi.org/10.3150/11-BEJ395},
}

@article {MR2480786,
    AUTHOR = {Savov, Mladen},
     TITLE = {Small time two-sided {LIL} behavior for {L}\'{e}vy processes at
              zero},
   JOURNAL = {Probab. Theory Related Fields},
  FJOURNAL = {Probability Theory and Related Fields},
    VOLUME = {144},
      YEAR = {2009},
    NUMBER = {1-2},
     PAGES = {79--98},
      ISSN = {0178-8051},
   MRCLASS = {60F15 (60G17 60G51)},
  MRNUMBER = {2480786},
MRREVIEWER = {Ren\'{e} L. Schilling},
       DOI = {10.1007/s00440-008-0142-1},
       URL = {https://doi.org/10.1007/s00440-008-0142-1},
}

@article {MR2591911,
    AUTHOR = {Savov, Mladen},
     TITLE = {Small time one-sided {LIL} behavior for {L}\'{e}vy processes at
              zero},
   JOURNAL = {J. Theoret. Probab.},
  FJOURNAL = {Journal of Theoretical Probability},
    VOLUME = {23},
      YEAR = {2010},
    NUMBER = {1},
     PAGES = {209--236},
      ISSN = {0894-9840},
   MRCLASS = {60G51 (60F15)},
  MRNUMBER = {2591911},
MRREVIEWER = {Allan Gut},
       DOI = {10.1007/s10959-008-0202-6},
       URL = {https://doi.org/10.1007/s10959-008-0202-6},
}

@article{bang2021asymptotic,
      title={Asymptotic shape of the concave majorant of a {L}\'evy process}, 
      author={Bang, David and Gonz\'alez C\'azares, Jorge Ignacio and Mijatovi\'c, Aleksandar},
      journal = {To appear in Annales Henri Lebesgue},
      year={2022},
      eprint={2106.09066},
      archivePrefix={arXiv},
      primaryClass={math.PR}
}

@article {MR4161123,
    AUTHOR = {Bisewski, Krzysztof and Ivanovs, Jevgenijs},
     TITLE = {Zooming-in on a {L}\'{e}vy process: failure to observe threshold
              exceedance over a dense grid},
   JOURNAL = {Electron. J. Probab.},
  FJOURNAL = {Electronic Journal of Probability},
    VOLUME = {25},
      YEAR = {2020},
     PAGES = {Paper No. 113, 33},
   MRCLASS = {60G51 (60F99)},
  MRNUMBER = {4161123},
MRREVIEWER = {P\'{e}ter Kevei},
       DOI = {10.1214/20-ejp513},
       URL = {https://doi.org/10.1214/20-ejp513},
}

\section*{Acknowledgements}

\thanks{
\noindent JGC and AM are supported by EPSRC grant EP/V009478/1 and The Alan Turing Institute under the EPSRC grant EP/N510129/1; 
AM was supported by the Turing Fellowship funded by the Programme on Data-Centric Engineering of Lloyd's Register Foundation; DB is funded by the CDT in Mathematics and Statistics at The University of Warwick. All three authors would like to thank the Isaac Newton Institute for Mathematical Sciences, Cambridge, for support and hospitality during the programme for Fractional Differential Equations where work on this paper was undertaken. This work was supported by EPSRC grant no EP/R014604/1.
}

\appendix

\section{Elementary estimates}

Recall that $(\gamma,\sigma^2,\nu)$ is the generating triplet of $X$ and the definition of the functions $\ov\gamma$, $\ov\sigma^2$ and $\ov\nu$ in~\eqref{eq:ov_functions} above.

\begin{lemma}
\label{lem:upper_tail_bound}
For any $p\in(0,2]$, $t,K>0$ and $\ve\in(0,1]$, the following bounds hold
\begin{align*}
\E[(|X_t|\wedge K)^p]
&\le (\ov\gamma(\ve)^2 t^2 + \ov\sigma^2(\ve)t)^{p/2} + K^p\ov\nu(\ve)t,\\
\p(|X_t|\ge K)
&\le (\ov\gamma(\ve)^2 t^2 + \ov\sigma^2(\ve)t)/K^2 + \ov\nu(\ve)t.
\end{align*}
\end{lemma}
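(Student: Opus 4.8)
The plan is to decompose $X_t$ into three independent pieces according to jump size and a Gaussian-plus-drift part, and then estimate each piece separately. Write $X_t = \gamma_\ve t + \sigma B_t + S^{(\ve)}_t + L^{(\ve)}_t$, where $B$ is a standard Brownian motion, $S^{(\ve)}_t$ collects the compensated small jumps of size $<\ve$, and $L^{(\ve)}_t$ collects the jumps of size $\ge\ve$; here the drift of the small-jump martingale part has been absorbed so that $\gamma_\ve t + \sigma B_t + S^{(\ve)}_t$ has mean $\ov\gamma(\ve)t$ and the decomposition $\gamma_\ve = \ov\gamma(\ve)$ matches the definition in~\eqref{eq:ov_functions}. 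Concretely, set $Y^{(\ve)}_t\coloneqq X_t - L^{(\ve)}_t$ (the process obtained by removing the big jumps); this is itself a L\'evy process with $\E[Y^{(\ve)}_t] = \ov\gamma(\ve)t$ and $\Var(Y^{(\ve)}_t) = \ov\sigma^2(\ve)t$. The first step is to record these two moment identities, which follow directly from the L\'evy--Khintchine formula and the definitions of $\ov\gamma$, $\ov\sigma^2$.

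For the $L^p$ bound, I would split $(|X_t|\wedge K)^p \le (|Y^{(\ve)}_t|\wedge K)^p + K^p\1_{\{L^{(\ve)}_t\neq 0\}}$, using that $|X_t|\wedge K \le |Y^{(\ve)}_t|\wedge K$ on the event $\{L^{(\ve)}_t = 0\}$ and $|X_t|\wedge K\le K$ always. Taking expectations, $\E[K^p\1_{\{L^{(\ve)}_t\neq 0\}}] = K^p\p(L^{(\ve)}_t\neq 0) = K^p(1-e^{-\ov\nu(\ve)t}) \le K^p\ov\nu(\ve)t$. For the remaining term, since $p\le 2$, Jensen's inequality (concavity of $x\mapsto x^{p/2}$) gives $\E[(|Y^{(\ve)}_t|\wedge K)^p] \le \E[|Y^{(\ve)}_t|^p] \le \E[|Y^{(\ve)}_t|^2]^{p/2} = (\ov\gamma(\ve)^2t^2 + \ov\sigma^2(\ve)t)^{p/2}$, where the last equality is $\E[(Y^{(\ve)}_t)^2] = \Var(Y^{(\ve)}_t) + \E[Y^{(\ve)}_t]^2$. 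Combining yields the first inequality.

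For the tail bound, split $\p(|X_t|\ge K) \le \p(|Y^{(\ve)}_t|\ge K) + \p(L^{(\ve)}_t\neq 0)$, bound the second probability by $\ov\nu(\ve)t$ as above, and apply Chebyshev's inequality to the first: $\p(|Y^{(\ve)}_t|\ge K) \le \E[(Y^{(\ve)}_t)^2]/K^2 = (\ov\gamma(\ve)^2t^2 + \ov\sigma^2(\ve)t)/K^2$. This gives the second inequality. There is no serious obstacle here; the only point requiring a little care is the bookkeeping in the decomposition — making sure the drift is assigned so that the mean of $Y^{(\ve)}_t$ is exactly $\ov\gamma(\ve)t$ with the stated sign convention from~\eqref{eq:ov_functions}, and that the cutoff function $\1_{(-1,1)}$ in the generating triplet is correctly reconciled with the small-jump/large-jump split at level $\ve\le 1$. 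Since $\ve\le 1$, the jumps of size in $[\ve,1)$ are compensated in the triplet but uncompensated in $L^{(\ve)}$, which is precisely why $\ov\gamma(\ve) = \gamma - \int_{(-1,1)\setminus(-\ve,\ve)}x\,\nu(\D x)$ appears; I would verify this identity explicitly as the one non-routine line.
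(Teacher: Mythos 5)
Your proof is correct and follows essentially the same approach as the paper: Lévy–Itô decomposition at level $\ve$ into the big-jump compound Poisson part and the remainder, a union/indicator bound against the event that a big jump occurs (controlled by $1-e^{-\ov\nu(\ve)t}\le\ov\nu(\ve)t$), and Jensen's inequality with $x\mapsto x^{p/2}$ applied to the second moment of the small-jump-plus-drift part. The only cosmetic difference is that the paper obtains the tail bound by specialising the moment bound to $p=2$ and then applying Markov to $\E[X_t^2\wedge K^2]$, whereas you apply a union bound and Chebyshev directly to $Y^{(\ve)}_t$, but these yield the same inequality.
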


\begin{proof}
Let $X_t=\ov\gamma(\ve)t + J_t + M_t$ be the L\'evy-It\^o decomposition of $X$ at level $\ve$, where $J$ is compound Poisson containing all of the jumps of $X$ with magnitude at least $\ve$ and $M_t$ is a martingale with jumps of size smaller than $\ve$. Fix $t>0$ and define the event $A$ 
of not observing any jump of $J$ on the time interval $[0,t]$. Clearly $1-\p(A)=1-e^{-\ov\nu(\ve)t}\le \ov\nu(\ve)t$. Consider the elementary inequality
$|X_t|^p\wedge K^p\le |\ov\gamma(\ve) t+M_t|^p\1_{A}+K^p\1_{A^c}$. Taking expectations 
and applying Jensen's inequality (with the concave function $x\mapsto x^{p/2}$ on $(0,\infty)$), we obtain the bound
\[
\E\big(|X_t|^p\wedge K^p\big)
\le \big(\ov\gamma(\ve)^2t^2 + \E\big[M_t^2\big]\big)^{p/2}
+ K^p(1-\p(A)),
\]
because $\E M_t=0$. The first inequality readily follows. The second inequality follows from the first one: using Markov's inequality we get
\[
\p(|X_t|\geq  K)
=\p(|X_t|\wedge K\geq  K)\leq \E(X_t^2\wedge K^2)/K^2.
\]
Thus, the second result follows from the first with $p=2$. 
\end{proof}

\end{document}